\font\cuf=cmtt8
\newcommand{\curl}[1]{{\cuf #1}}
\newtheorem{prop}{Proposition}[section]
\newtheorem{thm}[prop]{Theorem}
\newtheorem{cor}[prop]{Corollary}
\newtheorem{ques}[prop]{Question}
\newtheorem{lem}[prop]{Lemma}
\theoremstyle{definition}
\newtheorem{de}[prop]{Definition}
\newtheorem{example}[prop]{Example}
\newtheorem{examples}[prop]{Examples}
\theoremstyle{remark}
\newtheorem*{remark}{Remark}            
\def\C{{\mathbb C}}
\def\Z{{\mathbb Z}}
\def\Q{{\mathbb Q}}
\def\R{{\mathbb R}}
\def\L{{\mathbb L}}
\def\inter{\mathop{\rm Int}\nolimits}
\def\im{\mathop{\rm Im}\nolimits}
\def\id{\mathop{\rm id}\nolimits}
\def\co{\colon\thinspace}
\def\emb{{\hookrightarrow}}
\def\E{\mathcal E}
\def\limin{\mathop{\lim_\leftarrow}\nolimits}
\def\Ebad{\mathcal E_{\rm{bad}}}
\def\Eo{\mathcal E_{\rm{O}}}
\def\EML{\mathcal E_{\rm{ML}}}
\def\Sbad{ S_{\rm{bad}}}
\def\SML{S_{\rm{ML}}}
\def\gbad{\gamma_{\rm{bad}}}
\def\gML{\gamma_{\rm{ML}}}
\def\r{\mathcal R}
\def\M{\mathcal M}
\def\Sm{\mathcal S}
\def\Q{\mathcal Q}
\begin{document}
\title{On uniqueness of end sums and 1-handles at infinity}

\author[J.~Calcut]{Jack S. Calcut}
\address{Department of Mathematics\\
         Oberlin College\\
         Oberlin, OH 44074}
\email{jcalcut@oberlin.edu}
\urladdr{\href{http://www.oberlin.edu/faculty/jcalcut/}{\curl{http://www.oberlin.edu/faculty/jcalcut/}}}

\author[R.~Gompf]{Robert E. Gompf}
\address{The University of Texas at Austin\\
Mathematics Department RLM 8.100\\
Attn: Robert Gompf\\
2515 Speedway Stop C1200\\
Austin, Texas 78712-1202}
\email{gompf@math.utexas.edu}
\urladdr{\href{https://www.ma.utexas.edu/users/gompf/}{\curl{https://www.ma.utexas.edu/users/gompf/}}}

\begin{abstract}
For oriented manifolds of dimension at least 4 that are simply connected at infinity, it is known that end summing is a uniquely defined operation. Calcut and Haggerty showed that more complicated fundamental group behavior at infinity can lead to nonuniqueness. The present paper examines how and when uniqueness fails. Examples are given, in the categories \textsc{top}, \textsc{pl} and \textsc{diff}, of nonuniqueness that cannot be detected in a weaker category (including the homotopy category). In contrast, uniqueness is proved for Mittag-Leffler ends, and generalized to allow slides and cancellation of (possibly infinite) collections of 0- and 1-handles at infinity. Various applications are presented, including an analysis of how the monoid of smooth manifolds homeomorphic to $\R^4$ acts on the smoothings of any noncompact 4-manifold.
\end{abstract}
\maketitle


\section{Introduction}

Since the early days of topology, it has been useful to combine spaces by simple gluing operations. The connected sum operation for closed manifolds has roots in nineteenth century surface theory, and its cousin, the boundary sum of compact manifolds with boundary, is also classical. These two operations are well understood. In the oriented setting, for example, the connected sum of two connected manifolds is unique, as is the boundary sum of two manifolds with connected boundary. The boundary sum has an analogue for open manifolds, the \emph{end sum}, which has been used in various dimensions since the 1980s, but is less well known and understood. In contrast with boundary sums, end sums of one-ended oriented manifolds need not be uniquely determined, even up to proper homotopy \cite{CH}. The present paper explores uniqueness and its failure in more detail. To illustrate the subtlety of the issue, we present examples in various categories (homotopy, \textsc{top}, \textsc{pl}, and \textsc{diff}) where uniqueness fails, but the failure cannot be detected in weaker categories. In counterpoint, we find general hypotheses under which the operation {\em is} unique in all categories and apply this result to exotic smoothings of open 4-manifolds. Our results naturally belong in the broader context of {\em attaching handles at infinity}. We obtain general uniqueness results for attaching collections of 0- and 1-handles at infinity, generalizing handle sliding and cancellation. We conclude that end sums, and more generally, collections of handles at infinity with index at most one, can be controlled in broad circumstances, although deep questions remain.

End sums are the natural analogue of boundary sums. To construct the latter, we choose codimension zero embeddings of a disk into the boundaries of the two summands, then use these to attach a 1-handle. For an end sum of open manifolds, we attach a 1-handle at infinity, guided by a properly embedded ray in each summand. Informally, we can think of the 1-handle at infinity as a piece of tape joining the two manifolds; see Definition~\ref{onehandles} for details. Boundary summing two compact manifolds then has the effect of end summing their interiors. While this notion of end summing seems obvious, the authors have been unable to find explicit appearences of it before the second author's 1983 paper \cite{threeR4} and sequel \cite{infR4} on exotic smoothings of $\R^4$. However, the germ of the idea may be perceived in Mazur's 1959 paper \cite{M59} and Stallings' 1965 paper \cite{Stall65}. End summing  was used in \cite{infR4} to construct infinitely many exotic smoothings of $\R^4$. The appendix of that paper showed that the operation is well-defined in that context, so is independent of choice of rays and their order (even for infinite sums). Since then, the second author and others have continued to use end summing with an exotic $\R^4$  for constructing many exotic smoothings on various open 4-manifolds, e.g., Taylor (1997) \cite[Theorem~6.4]{T}, Gompf (2017) \cite[Section~7]{MinGen}. The operation has also been subsequently used in other dimensions, for example by Ancel (unpublished) in the 1980s to study high-dimensional Davis manifolds, and by Tinsley and Wright (1997) \cite{TW97} and Myers (1999) \cite{My} to study 3-manifolds. In 2012, the first author, with King and Siebenmann, gave a somewhat general treatment \cite{CKS} of end sum (called {\em CSI, connected sum at infinity}, therein) in all dimensions and categories (\textsc{top}, \textsc{pl}, and \textsc{diff}). One corollary gave a classification of multiple hyperplanes in $\R^n$ for all $n\neq3$, which was recently used by Belegradek \cite{B14} to study certain interesting open aspherical manifolds. Most recently, Sparks (2018) \cite{Sp} used infinite end sums to construct uncountably many contractible topological 4-manifolds obtained by gluing two copies of $\R^4$ along a subset homeomorphic to $\R^4$.

While \cite{infR4} showed that end sums are uniquely determined for oriented manifolds homeomorphic to $\R^4$, uniqueness fails in general for multiple reasons. The most obvious layer of difficulty already occurs for the simpler operation of boundary summing. In that case, when a summand has disconnected boundary, we must specify which boundary component to use. For example, nondiffeomorphic boundary components can lead to boundary sums with nondiffeomorphic boundaries. We must also be careful to specify orientations --- a pair of disk bundles over $S^2$ with nonzero Euler numbers can be boundary summed in two different ways, distinguished by their signatures (0 or $\pm 2$).  In general, we should specify an orientation on each orientable boundary component receiving a 1-handle. Similarly, for end sums and 1-handles at infinity, we must specify which ends of the summands we are using and an orientation on each such end (if orientable).

Unlike boundary sums, however, end sums have a more subtle layer of nonuniqueness. One difficulty is specific to dimension 3: the rays in use can be knotted. Myers \cite{My} showed that uncountably many homeomorphism types of contractible manifolds can be obtained by end summing two copies of $\R^3$ along knotted rays. For this reason, the present paper focuses on dimensions above 3. However, another difficulty persists in high dimensions: rays determining a given end need not be properly homotopic. The first author and Haggerty \cite{CH} constructed examples of pairs of one-ended oriented $n$-manifolds ($n\ge4$) that can be summed in different ways, yielding manifolds that are not even properly homotopy equivalent. We explore this phenomenon more deeply in Section~\ref{Nonunique}. After sketching the key example of \cite{CH} in Example~\ref{CH}, we exhibit more subtle examples of nonuniqueness of end summing (and related constructions) on fixed oriented ends. Examples~\ref{htpy} include topological 5-manifolds with properly homotopy equivalent but nonhomeomorphic end sums on the same pair of ends, and \textsc{pl} $n$-manifolds (for various $n\ge9$) whose end sums are properly homotopy equivalent but not \textsc{pl}-homeomorphic. Unlike other examples in this section, those in Examples~\ref{htpy} have extra ends or boundary components; the one-ended case seems more elusive. Examples~\ref{PL} provide end sums of smooth manifolds ($n\ge8$) that are \textsc{pl}-homeomorphic  but not diffeomeorphic. The analogous construction in dimension 4 gives smooth manifolds whose end sums are naturally identified in the topological category, but whose smoothings are not stably isotopic. Distinguishing their diffeomorphism types seems difficult.

These failures of uniqueness arise from complicated fundamental group behavior at the relevant ends, contrasting with uniqueness associated with the simply connected end of $\R^4$. Section~\ref{Unique} examines more generally when ends are simple enough to guarantee uniqueness of end sums and 1-handle attaching. In dimensions 4 and up, it suffices for the end to satisfy the {\em Mittag-Leffler} condition (also called {\em semistability}), whose definition we recall in Section~\ref{Unique}. Ends that are simply connected or topologically collared are Mittag-Leffler; in fact, the condition can only fail when the end requires infinitely many $(n-1)$-handles in any topological handle decomposition (Proposition~\ref{Morse}). For example, Stein manifolds of complex dimension at least 2 have (unique) Mittag-Leffler ends. (See Corollaries~\ref{stein} and \ref{stein1h}, and Theorem~\ref{JB} for an application to 4-manifold smoothing theory.) The Mittag-Leffler condition is necessary and sufficient to guarantee that any two rays approaching the end are properly homotopic. This fact traces back at least to Geoghegan in the 1980s, and appears to have been folklore since the preceding decade. (See also Edwards and Hastings \cite{EH76}, Mihalik \cite[Thm.~2.1]{Mi83}, and \cite{Ge08}). The first author and King worked out an algebraic classification of proper rays up to proper homotopy on an arbitrary end in 2002. This material was later excised from the 2012 published version of \cite{CKS} due to length considerations and since a similar proof had appeared in Geoghegan's text \cite{Ge08} in the mean time. The present paper gives a much simplified version of the proof, dealing only with the Mittag-Leffler case, in order to highlight the topology underlying the algebraic argument (Lemma~\ref{ML}). This lemma leads to a general statement (Theorem~\ref{main}) about attaching countable collections of 1-handles to an open manifold. The following theorem is a special case.

\begin{thm}\label{introMain} Let $X$ be a (possibly disconnected) $n$-manifold, $n\ge4$. Then the result of attaching a (possibly infinite) collection of 1-handles at infinity to some oriented  Mittag-Leffler ends of $X$ depends only on the pairs of ends to which each 1-handle is attached, and whether their orientations agree.
\end{thm}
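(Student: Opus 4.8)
The plan is to isolate exactly which pieces of the construction data are genuine and to normalize away everything else. By Definition~\ref{onehandles}, attaching a single 1-handle at infinity requires a choice of proper ray into each of the two relevant ends, a normal framing of each ray, and (for a collection) an ordering of the handles. The assertion is that only the unordered pair of ends and the relative orientation survive. So I would establish three independence statements --- independence of the chosen rays, independence of the framings beyond the orientation data, and independence of the order of attachment --- and then combine them.

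For a single handle I would argue as follows. Since both ends are Mittag-Leffler, Lemma~\ref{ML} shows that any two proper rays approaching a given end are properly homotopic. Because a ray is $1$-dimensional and $n\ge 4$, the relevant embeddings have codimension $n-1\ge 3$, so a proper homotopy of rays can be promoted to a proper ambient isotopy by standard codimension-$\ge 3$ general position and unknotting arguments. An ambient isotopy carrying one ray onto another extends to carry the corresponding tubular neighborhoods, and hence the entire attaching region, from one configuration to the other; thus the diffeomorphism (resp.\ homeomorphism, \textsc{pl}-homeomorphism) type of the result is unaffected by the choice of rays. For the framing, note that the normal bundle of a ray is trivial over the contractible base $[0,\infty)$, so two framings differ by a map $[0,\infty)\to O(n-1)$; once the orientations of the end and of the handle core are fixed this reduces to a map into the connected group $SO(n-1)$, which is null-homotopic (there being no $\pi_1$ obstruction, as the base is an interval and not a circle). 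Hence the framing contributes nothing beyond the single bit recording whether the orientations agree, and the one-handle case is settled.

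For a (possibly infinite) collection I would first use properness together with the Mittag-Leffler structure of the ends to choose the rays pairwise disjoint and locally finite, diving deeper into the ends as the index grows, so that the handles are attached in pairwise disjoint regions pushed out toward infinity. Independence of the order is then immediate, since reordering merely permutes attachments with disjoint supports and has no effect on the result. It remains to feed the single-handle normalization of the previous paragraph into this infinite setting. \emph{This is the step I expect to be the main obstacle:} the ambient isotopies straightening the rays must be performed coherently for infinitely many handles converging on the same end, without their supports accumulating and destroying properness. I would handle this by working through a compact exhaustion of $X$, performing the isotopies one exhaustion level at a time with supports shrinking toward infinity, so that the infinite composition is itself a proper ambient isotopy. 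With ray choice, framing, and order all normalized, the output depends only on the combinatorial data of which pairs of ends are joined and whether their orientations agree, as claimed; this is the special case of Theorem~\ref{main} asserted here.
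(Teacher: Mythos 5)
Your overall architecture matches the paper's: Lemma~\ref{ML} supplies the proper homotopy between rays into a Mittag-Leffler end, an ambient-isotopy upgrade handles the ray choice, tubular-neighborhood uniqueness over the contractible base $[0,\infty)$ disposes of the framing up to the orientation bit, and a "push the supports out to infinity" argument keeps the infinite case proper. But there is a genuine gap in the step you promote a proper homotopy of rays to a proper ambient isotopy. Your justification is "codimension $n-1\ge 3$ general position and unknotting," which is the codimension of the \emph{rays}. What actually has to be put in general position is the \emph{track} of the homotopy, a $2$-dimensional object ($I\times[0,\infty)$ for each ray, or $I\times Y$ for the multiray). Generic disjointness of two $2$-complexes requires $2+2<n$, i.e.\ $n\ge5$; for $n=4$ a generic proper homotopy is only an immersion with isolated double points, and the tracks of distinct rays intersect each other. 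This is exactly the case the theorem most needs (exotic $\R^4$'s), and the paper's Lemma~\ref{ambient} has to remove these double points by finger moves, checking that the finger moves preserve properness (each compact set meets only finitely many disks, and no fingers are pushed over other fingers). Without that step your single-handle argument does not close at $n=4$. Relatedly, the paper warns that the statement you want is \emph{false} in $\R^3$ even for a proper nonambient isotopy of a line, so "standard unknotting arguments" cannot be invoked without confronting the noncompactness.

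A secondary issue: the theorem is asserted in all three categories, and your argument is really a \textsc{diff} argument. "General position" and "normal framings differ by a map to $O(n-1)$" are not available off the shelf in \textsc{top}; the paper reduces \textsc{pl} and \textsc{top} to \textsc{diff} by smoothing a neighborhood of the rays and the homotopy track (using Quinn's smoothing results in dimension~4 and Dancis's topological general position theorems in dimension $\ge5$). You should either restrict your claim to \textsc{diff} or supply this reduction. The normalization of the order of attachment is not actually needed, since Definition~\ref{onehandles} attaches all handles simultaneously along a single multiray; the real infinite-collection issue is the one you correctly identified, namely properness of the combined homotopy, and your exhaustion argument for that is essentially the paper's.
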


\noindent Note that uniqueness of end sums along Mittag-Leffler ends (preserving orientations) is a special case. Theorem~\ref{main} also deals with ends that are nonorientable or not Mittag-Leffler.

Theorem~\ref{main} has consequences for open 4-manifold smoothing theory, which we explore in Section~\ref{Smooth}. The theorem easily implies the result from \cite{infR4} that the oriented diffeomorphism types of 4-manifolds homeomorphic to $\R^4$ form a monoid $\r$ under end sum, allowing infinite sums that are independent of order and grouping. This monoid acts on the set $\Sm(X)$ of smoothings (up to isotopy) of any given oriented 4-manifold $X$ with a Mittag-Leffler end, and more generally a product of copies of $\r$ acts on $\Sm(X)$ through any countable collection of Mittag-Leffler ends (see Corollary~\ref{R4}). One can also deal with arbitrary ends by keeping track of a family of proper homotopy classes of rays. Similarly, one can act on $\Sm(X)$ by summing with exotic smoothings of $S^3\times \R$ along properly embedded lines (Corollary~\ref{SxR}), or modify smoothings along properly embedded star-shaped graphs. While summing with a fixed exotic $\R^4$ is unique for an oriented (or nonorientable) Mittag-Leffler end, Section~\ref{Nonunique} suggests that there should be examples of nonuniqueness when the end of $X$ is not Mittag-Leffler. However, such examples seem elusive, prompting the following natural question.

\begin{ques}\label{R4sum} Let $X$ be a smooth, one-ended, oriented 4-manifold. Can summing $X$ with a fixed exotic $\R^4$, preserving orientation, yield different diffeomorphism types depending on the choice of ray in $X$?
\end{ques}

\noindent We show (Proposition~\ref{subtle_prop}) that such examples would be quite difficult to detect.

Having studied the uniqueness problem for adding $1$-handles at infinity, we progress in Section~\ref{slides} to uniqueness of adding collections of $0$- and $1$-handles at infinity (Theorem~\ref{maincancel}). It turns out that, when adding countably many handles of index $0$ and $1$, the noncompact case is simpler than for compact handle addition. As an application of Theorem~\ref{maincancel}, we present (Theorem~\ref{hut}) a very natural and partly novel proof of the hyperplane unknotting theorem of Cantrell~\cite{C63} and Stallings~\cite{Stall65}: each proper embedding of $\R^{n-1}$ in $\R^n$, $n\geq 4$, is unknotted (in each category \textsc{diff}, \textsc{pl}, and \textsc{top}). An immediate corollary is the \textsc{top} Schoenflies theorem: the closures of the two complementary regions of a (locally flat) embedding of $S^{n-1}$ in $S^n$, $n\geq4$, are topological disks. Mazur's infinite swindle still lies at the heart of our proof of the hyperplane unknotting theorem. The novelty in our proof consists of the supporting framework of $0$- and $1$-handle additions, slides, and cancellations at infinity.

Throughout the text, we take manifolds to be Hausdorff with countable basis, so with only countably many components. We allow boundary, and note that the theory is vacuous unless there is a noncompact component. {\em Open} manifolds are those with no boundary and no compact components. We work in a category \textsc{cat} that can be \textsc{diff}, \textsc{pl}, or \textsc{top}. For example, \textsc{diff} homeomorphisms are the same as diffeomorphisms. Embeddings (particularly with codimension zero) are not assumed to be proper. (Proper means the preimage of every compact set is compact.) In \textsc{pl} and \textsc{top}, embeddings are assumed to be locally flat (as is automatically true in \textsc{diff}). It follows that in each category, codimension-one two-sided embeddings in $\inter X$ are bicollared (Brown \cite{Brown} in \textsc{top}; see Connelly \cite{Co71} for a simpler proof in both  \textsc{top} and \textsc{pl}). Furthermore, a \textsc{cat} proper embedding $\gamma\co Y\emb X^n$ of a \textsc{cat} 1-manifold $Y$ with $b_1(Y)=0$ and $\gamma^{-1}(\partial X)=\emptyset$ extends to a \textsc{cat} proper embedding $\overline{\nu}\co Y\times D^{n-1}\emb X^n$ whose boundary (after rounding corners in \textsc{diff}) is bicollared. (This is easy in \textsc{diff} and \textsc{pl}, and follows in \textsc{top} by a classical argument: Cover suitably by charts exhibiting $Y$ as locally flat, then stretch one chart consecutively through the others.) If we radially identify $\R^{n-1}$ with $\inter D^{n-1}$, $\overline{\nu}$ determines an embedding $\nu\co Y\times\R^{n-1}\emb X$. We call $\nu$ and $\overline{\nu}$ {\em tubular neighborhood maps}, and their images open (resp.\ closed) {\em tubular neighborhoods} of $Y$. Thus, an open tubular neighborhood extends to a closed tubular neighborhood by definition.

\section{1-handles at infinity}\label{handles}

We begin with our procedure for attaching 1-handles at infinity.

\begin{de}\label{onehandles} A {\em multiray} in a \textsc{cat} $n$-manifold $X$ is a \textsc{cat} proper embedding $\gamma\co S\times [0,\infty)\emb X$, with $\gamma^{-1}(\partial X)=\emptyset$, for some discrete (so necessarily countable) set $S$ called the {\em index set} of $\gamma$. If the domain has a single component, $\gamma$ will be called a {\em ray}. Given two multirays $\gamma^-,\gamma^+\co S\times [0,\infty)\emb X$ with disjoint images, choose tubular neighborhood maps $\nu^\pm\co S\times [0,\infty)\times\R^{n-1}\emb X$ with disjoint images, and let $Z$ be the \textsc{cat} manifold obtained by gluing $S\times[0,1]\times\R^{n-1}$ to $X$ using identifications $\nu^\pm\circ(\id_S\times\varphi^\pm\times\rho^\pm)$, where $\varphi^-\co[0,\frac12)\to[0,\infty)$ and $\varphi^+\co(\frac12,1]\to[0,\infty)$ and $\rho^\pm\co\R^{n-1}\to\R^{n-1}$ are diffeomorphisms, with $\rho^\pm$ chosen so that $\varphi^\pm\times\rho^\pm$ preserves orientation. Then $Z$ is obtained by {\em attaching 1-handles at infinity} to $X$ along $\gamma^-$ and $\gamma^+$ (see Figure~\ref{fig:onehandle}).
\end{de}

\begin{figure}[htbp!]
    \centerline{\includegraphics[scale=1.0]{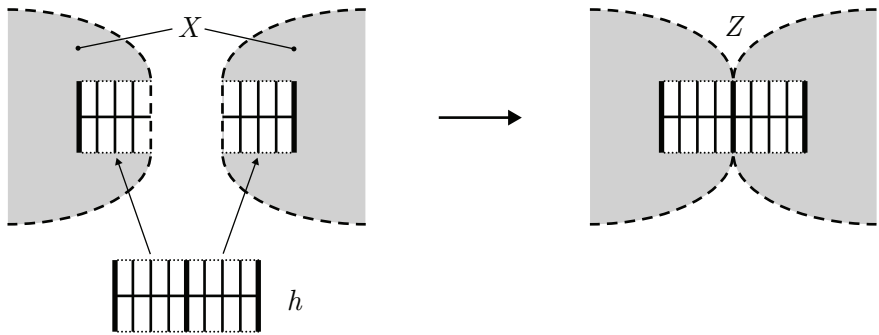}}
    \caption{Data for attaching $h$, a $1$-handle at infinity, to the $n$-manifold $X$ (left) and resulting $n$-manifold $Z$ (right).}
\label{fig:onehandle}
\end{figure}

\noindent The case of handle attaching where $S$ is a single point and $X$ has two components that are connected by the 1-handle at infinity is called the {\em end sum} or {\em connected sum at infinity} in the literature. In general, we will see that $Z$ depends in a subtle way on the choice of images of $\gamma^\pm$ (Section~\ref{Nonunique}), but not on the parametrizations of their rays. It depends on the orientations locally induced by $\nu^\pm$, but is otherwise independent of the choices of maps $\nu^\pm$, $\varphi^\pm$ and $\rho^\pm$. (Independence follows from the stronger Theorem~\ref{main} when $n\ge4$, and by a similar method in lower dimensions.) By reparametrizing the maps $\varphi^\pm$, we can change their domains to smaller neighborhoods of the endpoints of $[0,1]$ without changing $Z$, making it more obvious that attaching compact 1-handles to the boundary of a compact manifold has the effect of attaching handles at infinity to the interior. Yet another description of handle attaching at infinity is to remove the interiors of the closed tubular neighborhoods from $X$ and glue together the resulting $\R^{n-1}$ boundary components. Some articles (eg.\ \cite{CKS}, \cite{Sp}) use this perspective for defining end sums. It can be useful to start more generally, with any countable collection of disjoint rays, allowing clustering (for example, to preserve an infinite group action \cite{groupR4}). However, this gains  no actual generality, since we can transform such a collection to a multiray by suitably truncating the domains of the rays to achieve properness of the combined embedding.

\begin{remark} Handles at infinity of higher index are also useful \cite{MfdQuot}, although additional subtleties arise. For example, a Casson handle $CH$ can be attached to an unknot in the boundary of a 4-ball $B$ so that the interior of the resulting smooth 4-manifold is not diffeomorphic to the interior of any compact manifold. However, $\inter CH$ is diffeomorphic to $\R^4$, so we can interchange the roles of $\inter CH$ and $\inter B$, exhibiting the manifold as $\R^4$ with a 2-handle attached at infinity. The latter is attached along a properly embedded $S^1\times[0,\infty)$ in $\R^4$ that is topologically unknotted but smoothly knotted, and cannot be smoothly compactified to an annulus in the closed 4-ball. This proper annulus seems analogous to a knotted ray in a 3-manifold, but is more subtle since it is unknotted in \textsc{top}.
\end{remark}

Variations on the above 1-handle construction are used in \cite{MinGen}. Let $X$ be a topological 4-manifold with a fixed smooth structure, and let $R$ be an exotic $\R^4$ (a smooth, oriented manifold homeomorphic but not diffeomorphic to $\R^4$). Choose a smooth ray in $X$, and homeomorphically identify a smooth, closed tubular neighborhood $N$ of it with the complement of a tubular neighborhood of a ray in $R$. Transporting the smooth structure from $R$ to $N$, where it fits together with the original one on $X-\inter N$, we obtain a new smooth structure on $X$ diffeomorphic to an end sum of $X$ and $R$. The advantage of this description is that it fixes the underlying topological manifold, allowing us to assert, for example, that the two smooth structures are stably isotopic. Another variation from \cite{MinGen} is to sum a smooth structure with an exotic $\R\times S^3$ along a smooth, properly embedded line in each manifold, with one line topologically isotopic to $\R\times\{p\}\subset\R\times S^3$. (We order the factors this way instead of the more commonly used $S^3\times\R$ so that the obvious identification with $\R^4-\{0\}$ preserves orientation.) One can similarly change a smooth structure on a high-dimensional \textsc{pl} manifold by summing along a line with $\R\times\Sigma$ for some exotic sphere $\Sigma$. We exhibit these operations in Section~\ref{Smooth} as well-defined monoid actions on the set of isotopy classes of smoothings of a fixed topological manifold. One can also consider \textsc{cat} sums along lines in general. We discuss nonuniqueness of this latter operation in Section~\ref{Nonunique} as a prelude to discussing subtle end sums.

There are several obvious sources of nonuniqueness for attaching 1-handles at infinity. For attaching 1-handles in the compact setting, the result can depend both on orientations and on choices of boundary components. We will consider orientations in Section~\ref{Unique}, but now recall the noncompact analogue of the set of boundary components, the space of ends of a manifold (e.g., \cite{HR}). This only depends on the underlying \textsc{top} structure of a \textsc{cat} manifold $X$ (and generalizes to other  spaces). A {\em neighborhood of infinity} in $X$ is the complement of a compact set, and a {\em neighborhood system of infinity} is a nested sequence $\{U_i|i\in\Z^+\}$ of neighborhoods of infinity with empty intersection, and with the closure of $U_{i+1}$ contained in $U_i$ for all $i\in\Z^+$.

\begin{de} For a fixed neighborhood system $\{U_i\}$ of infinity, the {\em space of ends} of $X$ is given by $\E=\E (X)=\limin \pi_0(U_i)$.
\end{de}

\noindent That is, an end $\epsilon\in\E (X)$ is given by a sequence $V_1\supset V_2\supset V_3\supset\cdots$, where each $V_i$ is a component of $U_i$. For two different neighborhood systems of infinity for $X$, the resulting spaces $\E (X)$ can be canonically identified: The set is preserved when we pass to a subsequence, but any two neighborhood systems of infinity have interleaved subsequences. A {\em neighborhood} of the end $\epsilon$ is an open subset of $X$ containing one of the subsets $V_i$. This notion allows us to topologize the set $X\cup\E(X)$ so that $X$ is homeomorphically embedded as a dense open subset and $\E(X)$ is totally disconnected \cite{Fr}. (The new basis elements are the components of each $U_i$, augmented by the ends of which they are neighborhoods.) The resulting space is Hausdorff with a countable basis. If $X$ has only finitely many components, this space is compact, and called the {\em Freudenthal} or {\em end compactification} of $X$. In this case, $\E(X)$ is homeomorphic to a closed subset of a Cantor set.

Ends can also be be described using rays, most naturally if we allow the rays to be singular. We call a continuous, proper map $\gamma\co S\times[0,\infty)\to X$ ($S$ discrete and countable) a {\em singular multiray}, or a {\em singular ray} if $S$ is a single point. Every singular ray $\gamma$ in a manifold $X$ determines an end $\epsilon_\gamma\in\E(X)$. This is because $\gamma$ is proper, so every neighborhood $U$ of infinity in $X$ contains $\gamma([k,\infty))$ for sufficiently large $k$, and this image lies in a single component of $U$. In fact, an alternate definition of $\E(X)$ is as the set of equivalence classes of singular rays, where two such are considered equivalent if their restrictions to $\Z^+$ are properly homotopic. A singular multiray $\gamma\co S\times [0,\infty)\emb X$ then determines a function $\epsilon_\gamma\co S\to\E(X)$ that is preserved under proper homotopy of $\gamma$. Attaching 1-handles at infinity depends on these functions for $\gamma^-$ and $\gamma^+$, just as attaching compact 1-handles depends on choices of boundary components, with examples of the former easily obtained from the latter by removing boundary. We will find more subtle dependence on the defining multirays in the next section, but a weak condition preventing these subtleties in Section~\ref{Unique}.

\section{Nonuniqueness}\label{Nonunique}

We now investigate examples of nonuniqueness in the simplest setting. In each case, we begin with an open manifold $X$ with finitely many ends, and attach a single 1-handle at infinity, at a specified pair of ends. We assume the 1-handle respects a preassigned orientation on $X$. For attaching 1-handles in the compact setting, this would be enough information to uniquely specify the result, but we demonstrate that uniqueness can still fail for a 1-handle at infinity. It was shown in \cite{CH} that even the proper homotopy type need not be uniquely determined; Example~\ref{CH} below sketches the simplest construction from that paper. Our subsequent examples are more subtle, having the same proper homotopy (or even $\textsc{cat}'$ homeomorphism) type but distinguished by their \textsc{cat} homeomorphism types. 

All of our examples necessarily have complicated fundamental group behavior at infinity, since Section~\ref{Unique} proves uniqueness when the fundamental group is suitably controlled. We obtain the required complexity by the following construction, which generalizes examples of \cite{CH}:

\begin{de} For an oriented \textsc{cat} manifold $X$, let $\gamma^-,\gamma^+\co S\times [0,\infty)\emb X$ be multirays with disjoint images. {\em Ladder surgery} on $X$ along $\gamma^-$ and $\gamma^+$ is orientation-preserving surgery on the infinite family of 0-spheres given by $\{\gamma^-(s,n),\gamma^+(s,n)\}$ for each $s\in S$ and $n\in\Z^+$. That is, we find disjoint \textsc{cat} balls centered at the points $\gamma^\pm(s,n)$, remove the interiors of the balls, and glue each resulting pair of boundary spheres together by a reflection (so that the orientation of $X$ extends).
\end{de}

\noindent It is not hard to verify that the resulting oriented \textsc{cat} homeomorphism type only depends on the end functions $\epsilon_{\gamma^\pm}$ of the multirays; see Corollary~\ref{ladder} for details and a generalization to unoriented manifolds. If $X$ has two components $X_1$ and $X_2$, each with $k$ ends, any bijection from $\E(X_1)$ to $\E(X_2)$ determines a connected manifold with $k$ ends obtained by ladder surgery with $S=\E(X_1)$. Such a manifold will be called a {\em ladder sum} of $X_1$ and $X_2$. For closed, connected, oriented $(n-1)$-manifolds $M$ and $N$, we let $\L(M,N)$ denote the ladder sum of the two-ended $n$-manifolds $\R\times M$ and $\R\times N$, for the bijection preserving the ends of $\R$. (This is a slight departure from \cite{CH}, which used the one-ended manifold $[0,\infty)$ in place of $\R$.)  Note that any ladder surgery transforms its multirays $\gamma^\pm$ into infinite unions of circles, and surgery on all these circles (with any framings) results in the manifold obtained from $X$ by adding 1-handles at infinity along $\gamma^\pm$. (This is easily seen by interpreting the surgeries as attaching 1- and 2-handles to $I\times X$.)

The examples in~\cite{CH} are naturally presented in terms of ladder sums and attaching 1-handles at infinity. They represent the simplest type of example where a single 1-handle may be attached at infinity in essentially distinct ways, namely an orientation-preserving end sum of one-ended manifolds.

\begin{example}\label{CH} {\bf Homotopy inequivalent end sums (one-ended) \cite{CH}.}
For a fixed prime $p>1$, let $E$ denote the $\R^2$-bundle over $S^2$ with Euler number $-p$ (so $E$ has a neighborhood of infinity diffeomorphic to $\R\times L(p,1)$). Let $Y$ be the ladder sum of $E$ and $\R^4$. We will attach a single 1-handle at infinity to the disjoint union $X=Y \sqcup E$ in two ways to produce distinct, one-ended, boundaryless manifolds $Z_0$ and $Z_1$. Let $\gamma_0$ and $\gamma_1$ be rays in $Y$, with $\gamma_0$ lying in the $E$ summand and $\gamma_1$ lying in the $\R^4$ summand. Let $\gamma$ be any ray in $E$, and let $Z_i$ be obtained from $X$ by attaching a 1-handle at infinity along $\gamma_i$ and $\gamma$. The manifolds $Z_0$ and $Z_1$ are not properly homotopy equivalent (in fact, their ends are not properly homotopy equivalent) since they have nonisomorphic cohomology algebras at infinity~\cite{CH}. The basic idea is that both manifolds $Z_i$ have obvious splittings as ladder sums. For $Z_0$, one summand is $\R^4$, so all cup products from $H^1(Z_0;\Z/p)\otimes H^2(Z_0;\Z/p)$ are supported in the other summand in a 1-dimensional subspace of $H^3(Z_0;\Z/p)$. However, $Z_1$ has cup products on both sides, spanning a 2-dimensional subspace.
\end{example}

Our remaining examples are pairs with the same homotopy type, distinguished by more subtle means.

\begin{examples}[{\bf a}]\label{htpy} {\bf Homotopy equivalent but nonhomeomorphic sums.} It should not be surprising that the sum of two manifolds along a properly embedded line in each depends on more than just the ends and orientations involved. However, as a warm-up for end sums, we give an explicit example in \textsc{top} where moving one line changes the resulting homeomorphism type but not its proper homotopy type. Let $P$ and $Q$, respectively, denote $\C P^2$ and Freedman's fake $\C P^2$ (e.g.\ \cite{FQ}). Then there is a homotopy equivalence between $P$ and $Q$, restricting to a pairwise homotopy equivalence between the complements of a ball interior in each. But $P$ and $Q$ cannot be homeomorphic since $Q$ is unsmoothable. The ladder sum $\L(P,Q)$ is an unsmoothable topological 5-manifold with two ends. The lines $\R\times\{p\}\subset \R\times P$ and $\R\times\{q\}\subset\R\times Q$ can be chosen to lie in $\L(P,Q)$, with each spanning the two ends of $\L(P,Q)$, but they are dual to two different elements of $H^4(\L(P,Q);\Z/2)$ (cf.\ \cite{CH}), with $\R\times\{q\}$ dual to the Kirby-Siebenmann smoothing obstruction of $\L(P,Q)$. Clearly, there is a proper homotopy equivalence of $\L(P,Q)$ interchanging the two lines. Thus, the two resulting ways to sum $\L(P,Q)$ along a line with $\R\times\overline Q$ (where the orientation on $Q$ is reversed for later convenience) give properly homotopy equivalent manifolds, namely $\L(\overline Q\#P,Q)$ and $L(P,Q\#\overline Q)=L(P,P\#\overline P)$. (The last equality follows from Freedman's classification of simply connected topological 4-manifolds \cite{FQ}.) These two manifolds cannot be homeomorphic, since the latter is a smooth manifold whereas the former is unsmoothable, with Kirby-Siebenmann obstruction dual to a pair of lines running along opposite sides of the ladder. (A discussion of the cohomology of such manifolds can be found in \cite{CH}, but more simply, there are subsets $(a,b)\times Q$ on which the Kirby-Siebenmann obstruction must evaluate nontrivially.)

\noindent {\bf (b) Homotopy equivalent but nonhomeomorphic end sums.} We adapt the previous example to end sums. Instead of summing along a line, we end sum $\L(P,Q)$ with $\R\times\overline Q$ along their positive ends in two different ways (using rays obtained from the positive ends of the previous lines). We obtain a pair of properly homotopy equivalent, unsmoothable, three-ended manifolds. In one case, the modified end has a neighborhood that is smoothable, and in the other case, all three ends fail to have smoothable neighborhoods since the Kirby-Siebenmann obstruction cannot be avoided. Thus, we have a pair of nonhomeomorphic, but properly homotopy equivalent, manifolds, both obtained by an orientation-preserving end sum on the same pair of ends.

There are several other variations of the construction. We can replace the $\R$ factor by $[0,\infty)$ so that the ladder sum is one-ended,  to get an example of nonuniqueness of summing one-ended topological manifolds with compact boundary. Unfortunately, we cannot cap off the boundaries to obtain one-ended open manifolds, since the Kirby-Siebenmann obstruction is a cobordism invariant of topological 4-manifolds. However, we can modify the original ladder sum so that we do ladder surgery on the positive end, but end~sum on the negative end (which then has a neighborhood homeomorphic to $\R\times(\overline P\#\overline Q)$). Now we have a connected, two-ended open manifold whose ends can be joined by an orientation-preserving 1-handle at infinity in two different ways, yielding properly homotopy equivalent but nonhomeomorphic one-ended manifolds, only one of which has a smoothable neighborhood of infinity.

\noindent {\bf (c) Homotopy equivalent but not PL homeomorphic end sums.} In higher dimensions, the Kirby-Siebenmann obstruction of a neighborhood $V$ of an end cannot be killed by adding 1-handles at infinity (since $H^4(V;\Z/2)$ is not disturbed), but we can do the analogous construction using higher smoothing obstructions. This time, we obtain \textsc{pl} $n$-manifolds (for various $n\ge9$) that are properly homotopy equivalent but not \textsc{pl} homeomorphic. Let $P$ and $Q$ be homotopy equivalent \textsc{pl} $(n-1)$-manifolds with $P$ and $Q-\{q_0\}$ smooth but $Q$ unsmoothable. (For an explicit 24-dimensional pair, see  Anderson \cite[Proposition~5.1]{A}.) The previous discussion applies almost verbatim with \textsc{pl} in place of \textsc{top}, with the smoothing obstruction in $H^{n-1}(X;\Theta_{n-2})$ for \textsc{pl} manifolds $X$ in place of the Kirby-Siebenmann obstruction. The one change is that smoothability of $Q\#\overline Q$ follows since it is the double of the smooth manifold obtained from $Q$ by removing the interior of a  \textsc{pl} ball centered at $q_0$. (This time the orientation reversal is necessary since the smoothing obstruction need not have order 2.)
\end{examples}

\begin{examples}[{\bf a}]\label{PL} {\bf PL homeomorphic but nondiffeomorphic end sums (one-ended).}  A similar construction shows that end summing along a fixed pair of ends can produce \textsc{pl} homeomorphic but nondiffeomorphic manifolds. Let $\Sigma$ be an exotic $(n-1)$-sphere with $n>5$. Then $\Sigma$ is \textsc{pl} homeomorphic to $S^{n-1}$, so the ladder sum $\L(\Sigma,S^{n-1})$ is a two-ended smooth manifold with a \textsc{pl} self-homeomorphism that is not isotopic to a diffeomorphism. Since $\Sigma\#\overline\Sigma=S^{n-1}$, summing $\L(\Sigma,S^{n-1})$ along a line with $\R\times\overline\Sigma$ gives the two manifolds $\L(S^{n-1},S^{n-1})$ and $\L(\Sigma,\overline\Sigma)$. The first of these bounds an infinite handlebody made with 0- and 1-handles, as does its universal cover. Since a contractible 1-handlebody is a ball with some boundary points removed, it follows that the universal cover of $\L(S^{n-1},S^{n-1})$ embeds in $S^n$. However,  $\L(\Sigma,\overline\Sigma)$ contains copies of $\Sigma$ arbitrarily close to its ends. Since any homotopy $(n-1)$-sphere ($n>5$) that embeds in $S^n$ cuts out a ball, so is standard, it follows that no neighborhood of either end of  $\L(\Sigma,\overline\Sigma)$ has a cover embedding in $S^n$. Thus, the two manifolds have nondiffeomorphic ends, although they are \textsc{pl} homeomorphic. As before, we can modify this example to get a pair of end sums of two-ended manifolds, or a pair obtained from a two-ended connected manifold by joining its ends with a 1-handle in two different ways. This time however, we can also interpret the example as end summing two one-ended open manifolds, by first obtaining one-ended manifolds with compact boundary, then capping off the boundary. (Note that $\Sigma$ bounds a compact manifold. Unlike codimension-0 smoothing existence obstructions, the uniqueness obstructions are not cobordism invariants.) The resulting pair of one-ended \textsc{diff} manifolds are now easily seen to be \textsc{pl} homeomorphic (by Corollary~\ref{spherecollar}, for example) but nondiffeomorphic.

\noindent {\bf (b) Nonisotopic DIFF=PL structures on a fixed TOP 4-manifold (one-ended).} The previous construction has an analogue in dimension 4, where the categories \textsc{diff} and \textsc{pl} coincide. Replace $\R\times\Sigma$ by $W$, Freedman's exotic $\R\times S^3$. This is distinguished from the standard $\R\times S^3$ by the classical \textsc{pl} uniqueness obstruction in $H^3(\R\times S^3;\Z/2)\cong\Z/2$, dual to $\R\times\{p\}$. The ladder sum $L$ of $W$ with $\R\times S^3$ can be summed along a line with $W$ in two obvious ways. These can be interpreted as smoothings on the underlying topological manifold $\L(S^3,S^3)$, and can be transformed to an example of end summing one-ended \textsc{diff} manifolds as before: To transform $W$ into a one-ended \textsc{diff} manifold, cut it in half along a Poincar\'e homology sphere $\Sigma$, then cap it with an $E_8$-plumbing. The result $E$ is a smoothing of a punctured Freedman $E_8$-manifold. (Alternatively, we can take $E$ homeomorphic to a punctured fake $\C P^2$.) We ladder sum with $\R^4$. The two results of end summing with another copy of $E$ are identified in \textsc{top} with a ladder sum of two copies of $E$ (cf.\ Corollary~\ref{spherecollar}). The smoothings are nonisotopic (even stably, i.e., after Cartesian product with $\R^k$), since the uniqueness obstruction by which they differ near infinity is dual to a pair of lines on opposite sides of the ladder. However, the authors have not been able to distinguish their diffeomorphism types. The problem with the previous argument is that the sum of two copies of $W$ along a line is not diffeomorphic to $\R\times S^3$ (although the classical invariant vanishes). While $W$ contains a copy of $\Sigma$ separating its ends, so cannot embed in $S^4$, the sum of two copies of $W$ contains $\Sigma\#\Sigma$, which also does not embed in $S^4$. The effect of summing with reversed orientation or switched ends, or replacing $\Sigma$ by a different homology sphere, is less clear. This leads to the following question, which is discussed further in Section~\ref{Smooth} (Question~\ref{inverses2}).

\begin{ques}\label{inverses} Are there two exotic smoothings on $\R\times S^3$ whose sum along a line is the standard $\R\times S^3$?
\end{ques}

\noindent If such smoothings exist, one of which has the additional property that every neighborhood of one end has a slice $(a,b)\times S^3$ (as seen in \textsc{top}) that cannot smoothly embed in $S^4$, then the method of (a) gives two one-ended open 4-manifolds that can be end summed in two homeomorphic but not diffeomorphic (or \textsc{pl} homeomorphic) ways.
\end{examples}

\section{Uniqueness for Mittag-Leffler ends}\label{Unique}

Having examined the failure of uniqueness in the last section, we now look for hypotheses that guarantee that 1-handle attaching at infinity {\em is} unique. There are several separate issues to deal with. In the compact setting, attaching a 1-handle to given boundary components can yield two different results if both boundary components are orientable, so uniqueness requires specified orientations in that case. The same issue arises for 1-handles at infinity. Beyond that, we must consider the dependence on the involved multirays. Since rays in $\R^3$ can be knotted, uncountably many homeomorphism types of contractible manifolds arise as end sums of two copies of $\R^3$ \cite{My}. (See also \cite{CH}.) Thus, we assume more than 3 dimensions and conclude, not surprisingly, that the multirays affect the result only through their proper homotopy classes, and that the choices of (suitably oriented) tubular neighborhood maps cause no additional difficulties. We have already seen that different rays determining the same end can yield different results for end summing with another fixed manifold and ray, but we give a weak group-theoretic condition on an end that entirely eliminates dependence on the choice of rays limiting to it.

We begin with terminology for orientations. We will call an end $\epsilon$ of an $n$-manifold $X$ {\em orientable} if it has an orientable neighborhood in $X$. An orientation on one connected, orientable neighborhood of $\epsilon$ determines an orientation on every other such neighborhood, through the component of their intersection that is a neighborhood of $\epsilon$. Such a compatible choice of orientations will be called an {\em orientation of $\epsilon$}, so every orientable end has two orientations. We let $\Eo\subset\E(X)$ denote the open subset of orientable ends of $X$. (This need not be closed, as seen by deleting a sequence of points of $X$ converging to a nonorientable end.) If $\gamma$ is a singular multiray in a \textsc{diff} manifold $X$, the tangent bundle of $X$ pulls back to a trivial bundle $\gamma^*TX$ over $S\times[0,\infty)$. A fiber orientation on this bundle will be called a {\em local orientation of $X$ along $\gamma$}, and if such an orientation is specified, $\gamma$ will be called {\em locally orienting}. We apply the same terminology in \textsc{pl} and \textsc{top}, using the appropriate analogue of the tangent bundle, or equivalently but more simply, using local homology groups $H_n(X,X-\{\gamma(s,t)\})\cong\Z$. If $\gamma$ is a (nonsingular) \textsc{cat} multiray, a \textsc{cat} tubular neighborhood map $\nu$ induces a local orientation of $X$ along $\gamma$; if this agrees with a preassigned local orientation along $\gamma$, $\nu$ will be called {\em orientation preserving}. A homotopy between two singular multirays determines a correspondence between their local orientations (e.g., by pulling back the tangent bundle to the domain of the homotopy). If a singular ray $\gamma$ determines an orientable end $\epsilon_\gamma\in\Eo$, then a local orientation along $\gamma$ induces an orientation on the end, since $\gamma([k,\infty))$ lies in a connected, orientable neighborhood of $\epsilon_\gamma$ when $k$ is sufficiently large.

We now turn to the group theory of ends. See Geoghegan \cite{Ge08} for a more detailed treatment. An {\em inverse sequence of groups} is a sequence $G_1\leftarrow G_2\leftarrow G_3\leftarrow\cdots$ of groups and homomorphisms. We suppress the homomorphisms from the notation, since they will be induced by obvious inclusions in our applications. A {\em subsequence} of an inverse sequence is another inverse sequence obtained by passing to a subsequence of the groups and using the obvious composites of homomorphisms. Passing to a subsequence and its inverse procedure, along with isomorphisms commuting with the maps, generate the standard notion of equivalence of inverse sequences.

\begin{de}\label{ml} 
An inverse sequence $G_1\leftarrow G_2\leftarrow G_3\leftarrow\cdots$ of groups is called {\em Mittag-Leffler} (or {\em semistable}) if for each $i\in\Z^+$ there is a $j\ge i$ such that all $G_k$ with $k\ge j$ have the same image in $G_i$.
\end{de}

\noindent Clearly, a subsequence is Mittag-Leffler if and only if the original sequence is, so the notion is preserved by equivalences. After passing to a subsequence, we may assume $j=i+1$ in the definition.

For a manifold $X$ with a singular ray $\gamma$ and a neighborhood system $\{U_i\}$ of infinity, we reparametrize $\gamma$ so that $\gamma([i,\infty))$ lies in $U_i$ for each $i\in\Z^+$.

\begin{de} 
The {\em fundamental progroup} of $X$ based at $\gamma$ is the inverse sequence of groups $\pi_1(U_i,\gamma(i))$, where the homomorphism $\pi_1(U_{i+1},\gamma(i+1))\to \pi_1(U_i,\gamma(i))$ is the inclusion-induced map to $\pi_1(U_i,\gamma(i+1))$ followed by the isomorphism moving the base point to $\gamma(i)$ along the path $\gamma|[i,i+1]$.
\end{de}

\noindent This only depends on the \textsc{top} structure of $X$. Passing to a subsequence of $\{U_i\}$ replaces the fundamental progroup by a subsequence of it. Since any two neighborhood systems of infinity have interleaved subsequences, the fundamental progroup is independent, up to equivalence, of the choice of neighborhood system. It is routine to check that it is similarly preserved by any proper homotopy of $\gamma$, so it only depends on $X$ and the proper homotopy class of $\gamma$. Furthermore, the inverse sequence is unchanged if we replace each $U_i$ by its connected component containing $\gamma([i,\infty))$, so it is equivalent to use a neighborhood system of the end $\epsilon_\gamma$. Beware, however, that even if there is only one end, the choice of proper homotopy class of $\gamma$ can affect the fundamental progroup, and even whether its inverse limit vanishes. (See \cite[Example 16.2.4]{Ge08}. The homomorphisms in the example are injective, but changing $\gamma$ conjugates the resulting nested subgroups, changing their intersection.)

We call the pair $(X,\gamma)$ {\em Mittag-Leffler} if its fundamental progroup is Mittag-Leffler. We will see in Lemma~\ref{ML}(a) below that this condition implies $\gamma$ is determined up to proper homotopy by its induced end $\epsilon_\gamma$, so the fundamental progroup of $\epsilon_\gamma$ is independent of $\gamma$ in this case, and it makes sense to call  $\epsilon_\gamma$ a {\em Mittag-Leffler end}. Note that this condition rules out ends made by ladder surgery, and hence the examples of Section~\ref{Nonunique}. We will denote the set of Mittag-Leffler ends of $X$ by $\EML\subset\E(X)$, and its complement by $\Ebad$.

Many important types of ends are Mittag-Leffler. {\em Simply connected} ends are (essentially by definition) the special case for which the given images all vanish. {\em Topologically collared} ends, with a neighborhood homeomorphic to $\R\times M$ for some compact $(n-1)$-manifold $M$, are {\em stable}, the special case for which the fundamental progroup is equivalent to an inverse sequence with all maps isomorphisms. Other important ends are neither simply connected nor collared, but still Mittag-Leffler if the maps are nontrivial surjections (Example~\ref{surj}). Any end admits a neighborhood system for which the maps are not even surjective, obtained from an arbitrary system by adding 1-handles to each $U_i$ inside $U_{i-1}$; such ends may still be Mittag-Leffler. In the smooth category, we can analyze ends using a Morse function $\varphi$ that is exhausting (i.e., proper and bounded below). For such a function, the preimages $\varphi^{-1}(i,\infty)$ for $i\in\Z^+$ form a neighborhood system of infinity.

\begin{prop}\label{Morse} Let $X$ be a \textsc{diff} open $n$-manifold. If an end $\epsilon$ of $X$ is not Mittag-Leffler, then for every exhausting Morse function $\varphi$ on $X$ and every $t\in\R$, there are infinitely many critical points of index $n-1$ in the component of $\varphi^{-1}(t,\infty)$ containing $\epsilon$. In particular, if $X$ admits an exhausting Morse function with only finitely many index-$(n-1)$ critical points, then all of its ends are Mittag-Leffler.
\end{prop}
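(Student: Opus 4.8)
The plan is to prove the contrapositive of the first assertion: if for some exhausting Morse function $\varphi$ and some $t\in\R$ the component of $\varphi^{-1}(t,\infty)$ containing $\epsilon$ has only finitely many index-$(n-1)$ critical points, then $\epsilon$ is Mittag-Leffler. The final ``in particular'' clause is then immediate, since an exhausting Morse function with only finitely many index-$(n-1)$ critical points has, for each end and each $t$, only finitely many such critical points in the relevant component.

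The key idea is to read the fundamental progroup of $\epsilon$ off the superlevel sets of $\varphi$ and to apply Morse theory ``from the top down,'' i.e.\ to $-\varphi$. Set $U_i=\varphi^{-1}(i,\infty)$ and let $V_1\supset V_2\supset\cdots$ be the nested components determining $\epsilon$, so $V_i$ is the component of $U_i$ containing $\epsilon$ and, choosing a ray $\gamma$ into $\epsilon$ with $\gamma([i,\infty))\subset U_i$, the fundamental progroup of $\epsilon$ is equivalent to $\pi_1(V_1,\gamma(1))\leftarrow\pi_1(V_2,\gamma(2))\leftarrow\cdots$. Because $\varphi$ is proper and bounded below, each band $\varphi^{-1}[i,i+1]$ is compact and hence contains only finitely many critical points; lowering the level from $i+1$ to $i$ therefore changes the superlevel set by attaching finitely many disjoint handles, an index-$k$ critical point of $\varphi$ contributing a handle of index $n-k$ (the standard sublevel picture applied to $-\varphi$). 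In particular, an index-$(n-1)$ critical point of $\varphi$ is precisely what produces a $1$-handle.

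Now I would choose $N\ge t$ exceeding the (finitely many) critical values of the index-$(n-1)$ critical points lying in the $\epsilon$-component of $\varphi^{-1}(t,\infty)$. Since $V_i\subseteq\varphi^{-1}(t,\infty)$ is connected and limits to $\epsilon$, it lies in that component for $i\ge t$, so no band $\varphi^{-1}[i,i+1]$ contributes a $1$-handle to $V_i$ once $i\ge N$. Without $1$-handles the connected piece $V_i$ can acquire no new material: $0$-handles (index-$n$ critical points) are births of separate components, which can only be annexed by a later $1$-handle, and $2$-handles attach along circles inside a single component, so no merging of components into $V_i$ occurs. Hence for $i\ge N$ the larger set $V_i$ is obtained from $V_{i+1}$ by attaching, within this component, only handles of index $2$ and higher. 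A $2$-handle realizes $\pi_1$ as a quotient and handles of index $\ge 3$ leave $\pi_1$ unchanged, so the inclusion $V_{i+1}\hookrightarrow V_i$ induces a surjection $\pi_1(V_{i+1},\gamma(i+1))\to\pi_1(V_i,\gamma(i+1))$; composing with the basepoint-transport isomorphism along $\gamma|[i,i+1]$ keeps the bonding map surjective. Thus the fundamental progroup of $\epsilon$ has surjective bonding maps for $i\ge N$, so each image of $\pi_1(V_k)\to\pi_1(V_i)$ is eventually constant; hence $(X,\gamma)$, and so $\epsilon$, is Mittag-Leffler.

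The main obstacle I anticipate is the bookkeeping in the third step: making precise, for the single connected component $V_i$ and with the correct basepoints, that the absence of index-$(n-1)$ critical points genuinely prevents both the merging of $V_{i+1}$ with other components of $U_{i+1}$ and the absorption of $0$-handle births, so that only handles of index $\ge 2$ act on $V_i$. This requires a careful application of the superlevel-set handle decomposition band by band, noting that handles attached to components other than $V_{i+1}$ are irrelevant to $V_i$, and that the several disjoint handles in a single band may be analyzed simultaneously (so no perturbation to distinct critical values is needed). Everything else is a routine translation between Morse indices for $\varphi$ and $-\varphi$ and the elementary fact that an inverse sequence with surjective bonds is Mittag-Leffler.
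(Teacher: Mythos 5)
Your proposal is correct and takes essentially the same approach as the paper, just run in contrapositive form: both rest on the observation that an index-$(n-1)$ critical point of $\varphi$ is an index-$1$ handle for $-\varphi$, the only kind of handle that can make the bonding maps of the fundamental progroup fail to be surjective, combined with the fact that an inverse sequence with eventually surjective bonding maps is Mittag-Leffler. The paper's only cosmetic difference is that it perturbs $\varphi$ so each band $\varphi^{-1}[i,i+1]$ is an elementary cobordism and argues directly that a non-Mittag-Leffler end forces infinitely many non-surjective bonding maps, hence infinitely many such critical points.
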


\begin{proof} After perturbing $\varphi$ and composing it with an orientation-preserving diffeomorphism of $\R$, we can assume each  $\varphi^{-1}[i,i+1]$ is an elementary cobordism. Since $\epsilon$ is not Mittag-Leffler, its corresponding fundamental progroup must have infinitely many homomorphisms that are not surjective. Thus, there are infinitely many values of $i$ for which $\varphi^{-1}[i,\infty)$ is made from $\varphi^{-1}[i+1,\infty)$ by attaching a 1-handle with at least one foot in the component of the latter containing $\epsilon$. This handle corresponds to an index-1 critical point of $-\varphi$, or an index-$(n-1)$ critical point of $\varphi$.
\end{proof}

The Mittag-Leffler condition on an end of a \textsc{cat} manifold is determined by its underlying \textsc{top} structure (in fact, by its proper homotopy type), so we are free to change the smooth structure on a manifold before looking for a suitable Morse function. This is especially useful in dimension 4. For example, an exhausting Morse function on an exotic $\R^4$ with nonzero Taylor invariant must have infinitely many index-3 critical points \cite{T}, but after passing to the standard structure, there is such a function with a unique critical point. (Furthermore, an exotic $\R^4$ is topologically collared and simply connected at infinity.) Proposition~\ref{Morse} is most generally stated in \textsc{top}, using topological Morse functions. (These are well-behaved \cite{KS} and can be constructed from handle decompositions, which exist on all open  \textsc{top} manifolds, e.g.\ \cite{FQ}.)

Since every Stein manifold of complex dimension $m$ (real dimension $2m$) has an exhausting Morse function with indices at most $m$, we conclude:

\begin{cor}\label{stein} For every Stein manifold of complex dimension at least 2, the unique end of each component is Mittag-Leffler. \qed
\end{cor}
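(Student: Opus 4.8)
The plan is to feed the stated Morse-theoretic fact directly into Proposition~\ref{Morse} and to supply a short handle-theoretic argument for the uniqueness of the end. Since a Stein manifold may be disconnected (each component being Stein), I would argue one component at a time, so assume $X$ is a connected Stein manifold of complex dimension $m\ge2$, hence a noncompact open manifold of real dimension $n=2m\ge4$. By the quoted Andreotti--Frankel result I fix an exhausting Morse function $\varphi$ on $X$ all of whose indices are at most $m$. Because $m\ge2$ we have $n-1=2m-1>m$, so $\varphi$ has \emph{no} critical point of index $n-1$ (nor of index $n=2m>m$). In particular $\varphi$ has only finitely many (indeed zero) critical points of index $n-1$, so the final clause of Proposition~\ref{Morse} immediately yields that every end of $X$ is Mittag-Leffler.

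It remains to show $X$ is one-ended. First I would take the superlevel sets $U_t=\varphi^{-1}(t,\infty)$ as a neighborhood system of infinity and track $\pi_0(U_t)$ as $t$ decreases. Because $\varphi$ is proper and bounded below, each compact slab $\varphi^{-1}[a,b]$ contains only finitely many critical points, so it suffices to understand a single critical crossing. As $t$ decreases past a critical value whose critical point has index $\lambda\le m$, the superlevel set acquires the dual handle of index $n-\lambda\ge n-m=m\ge2$. A handle of index $k\ge2$ is attached along a connected sphere $S^{k-1}$ and so does not change $\pi_0$; regular crossings change nothing either. Hence $\pi_0(U_t)$ is independent of the regular value $t$.

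To pin down this constant value I would let $t$ drop below $\min\varphi$: then $U_t=X$, which is connected, so the common cardinality of $\pi_0(U_t)$ is $1$ and $U_t$ is connected for every regular $t$. Since the $U_t$ are nonempty for all $t$ (as $\varphi$ is unbounded above), the inverse limit $\E(X)=\limin\pi_0(U_t)$ is a single point and $X$ is one-ended. Combining the two halves, the unique end of each component is Mittag-Leffler.

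The only genuinely delicate point is the one-endedness bookkeeping: one must confirm that the index $\ge2$ of every dual handle---equivalently the absence of index-$(n-1)$ and index-$n$ critical points---really does leave $\pi_0$ of the superlevel sets unchanged, and that properness keeps the critical-point count locally finite so that this local analysis globalizes. The Mittag-Leffler conclusion itself is immediate from Proposition~\ref{Morse} once the inequality $2m-1>m$ is observed, and it is precisely here (and in the dual-handle index bound $n-\lambda\ge m\ge2$) that the hypothesis $m\ge2$ is used; for $m=1$ both conclusions can fail, as open Riemann surfaces are typically many-ended.
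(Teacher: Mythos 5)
Your proof is correct and follows the same route as the paper, which derives the corollary directly from the existence of an exhausting Morse function with indices at most $m$ together with Proposition~\ref{Morse}. The only difference is that you spell out the one-endedness bookkeeping (dual handles of index $\ge m\ge 2$ preserve $\pi_0$ of superlevel sets), which the paper leaves implicit as a standard fact about Stein manifolds.
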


\begin{example}\label{surj} For infinite-type Stein surfaces ($m=2$), the ends must be Mittag-Leffler, but they are typically neither simply connected nor stable (hence, not topologically collared). This is more generally typical for open 4-manifolds whose exhausting Morse functions require infinitely many critical points, but none of index above 2. As a simple example, let $X$ be an infinite end sum of $\R^2$-bundles over $S^2$. (Its diffeomorphism type is independent of the choice of rays, by Theorems~\ref{main} and~\ref{maincancel}, but it is convenient to think of the bundles as indexed by $\Z^+$ and summed consecutively.) If each Euler number is less than $-1$, then $X$ will be Stein. We get a neighborhood system of infinity with each $U_i$ obtained from a collar of the end of the first $i$-fold sum by attaching the remaining (simply connected) summands. Then each group $G_i$ is a free product of $i$ cyclic groups, and each homomorphism is surjective, projecting out one factor. The inverse limit is not finitely generated, so the end is not stable. (Every neighborhood system of the end has a subsequence that can be interleaved by some of our neighborhoods $U_i$.)
\end{example}

We can now state our main theorem on uniqueness of attaching 1-handles. Its primary conclusion is that when we attach 1-handles at infinity, any locally orienting defining ray that determines a Mittag-Leffler end will affect the outcome only through the end and local orientation it determines. If the end is also nonorientable, then even the local orientation has no influence (as for a compact 1-handle attached to a nonorientable boundary component). To state this in full generality, we also allow rays determining ends that are not Mittag-Leffler, which are required to remain in a fixed proper homotopy class. That is, we allow an arbitrary multiray $\gamma$, but require its restriction to the subset $\epsilon^{-1}_{\gamma}(\Ebad)$ of the index set $S$ (corresponding to rays determining ends that are not Mittag-Leffler) to lie in a fixed proper homotopy class. For each 1-handle with at least one defining ray determining a nonorientable Mittag-Leffler end, no further constraint is necessary, but otherwise we keep track of orientations. We do this through orientations of the end if they exist. In the remaining case, the end is not Mittag-Leffler, and we compare the local orientations of the rays through a proper homotopy. More precisely, we  have:

\begin{thm}\label{main} For a \textsc{cat} $n$-manifold $X$ with $n\ge4$, discrete $S$ and $i=0,1$, let $\gamma^-_i,\gamma^+_i\co S\times [0,\infty)\emb X$ be locally orienting \textsc{cat} multirays whose images (for each fixed $i$) are disjoint, and whose end functions $\epsilon_{\gamma^\pm_i}\co S\to \E(X)$ are independent of $i$. Suppose that
\begin{itemize}
\item[(a)] after $\gamma^-_0$ and $\gamma^-_1$ are restricted to the index subset $\epsilon^{-1}_{\gamma^-_0}(\Ebad)$, there is a proper homotopy between them.
\item[(b)] for each $s\in \epsilon^{-1}_{\gamma^-_0}(\Ebad\cup\Eo)\cap \epsilon^{-1}_{\gamma^+_0}(\Ebad\cup\Eo)$, the local orientations of the corresponding rays in $\gamma^-_0$ and $\gamma^-_1$ induce the same orientation of the end if there is one, and otherwise correspond under the proper homotopy of (a).
\item[(c)] the two analogous conditions apply to $\gamma^+_i$.
\end{itemize}
Let $Z_i$ be the result of attaching 1-handles to $X$ along $\gamma^\pm_i$ (for any choice of orientation-preserving tubular neighborhood maps $\nu^\pm_i$). Then there is a \textsc{cat} homeomorphism from $Z_0$ to $Z_1$ sending the submanifold $X$ onto itself by a \textsc{cat} homeomorphism \textsc{cat} ambiently isotopic in $X$ to the identity map.
\end{thm}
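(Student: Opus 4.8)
The plan is to show that the hypotheses force the two sets of defining data to be interchanged by a proper ambient isotopy of $X$, after which uniqueness of tubular neighborhoods delivers the homeomorphism. \emph{First} I would produce, for each sign, a proper homotopy carrying $\gamma_0^\pm$ to $\gamma_1^\pm$. Over the index subset $\epsilon^{-1}_{\gamma_0^\pm}(\Ebad)$ this homotopy is handed to us by hypotheses (a) and (c). Over the complementary indices every ray limits to a Mittag-Leffler end, and since the end functions agree for $i=0,1$, Lemma~\ref{ML}(a) provides a proper homotopy there as well. Patching these together using the properness of the multirays (to keep the pieces confined near infinity) yields proper homotopies $\gamma_0^\pm\simeq\gamma_1^\pm$ on all of $S\times[0,\infty)$. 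I would arrange them to respect local orientations: on $\epsilon^{-1}_{\gamma_0^-}(\Ebad\cup\Eo)\cap\epsilon^{-1}_{\gamma_0^+}(\Ebad\cup\Eo)$ this is exactly what (b) and (c) guarantee, while for the remaining indices at least one foot of the handle lands on a nonorientable Mittag-Leffler end, so a local-orientation mismatch can be removed by prepending a proper homotopy that drags the offending ray once around an orientation-reversing loop at that end (possible precisely because the end is nonorientable, and homotopy-preserving because it is Mittag-Leffler).

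\emph{Second} I would upgrade these proper homotopies to a single proper ambient isotopy $\{F_t\}$ of $X$ with $F_0=\id$ and $F_1\circ\gamma_0^\pm=\gamma_1^\pm$. Here the dimension hypothesis $n\ge4$ is essential: the multirays are $1$-dimensional, hence of codimension at least three, and the homotopy tracks are $2$-dimensional, of codimension at least two, so general position makes the tracks embedded and disjoint (including keeping the $-$ and $+$ tracks apart) and proper isotopy extension converts them into the ambient isotopy. The bookkeeping must be done over a compact exhaustion of $X$, moving only the finitely many rays active in each piece while shrinking the support toward infinity, so that the assembled $F_t$ is proper; it is isotopic to the identity by construction.

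\emph{Third} I would transport the handle attachment by $F_1$. The image $F_1\circ\nu_0^\pm$ is an orientation-preserving tubular neighborhood map of $\gamma_1^\pm$, and because each defining ray is contractible an orientation-preserving tubular neighborhood of a fixed multiray is unique up to proper ambient isotopy; a further isotopy therefore identifies it with the chosen $\nu_1^\pm$, including the auxiliary data $\varphi^\pm$ and $\rho^\pm$ up to the allowed (orientation-preserving) ambiguity. Matching the gluing identifications $\nu^\pm\circ(\id_S\times\varphi^\pm\times\rho^\pm)$ then extends $F_1$ to a \textsc{cat} homeomorphism $Z_0\to Z_1$ restricting on $X$ to $F_1$, which is \textsc{cat} ambiently isotopic in $X$ to the identity via $\{F_t\}$, as claimed.

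The step I expect to be the genuine obstacle is the second one: simultaneously (i) putting infinitely many, possibly clustering, homotopy tracks into general position, (ii) keeping the $+$ and $-$ families disjoint, and (iii) assembling the resulting local ambient moves into one \emph{proper} isotopy. The infinite, proper bookkeeping---ensuring the support marches off to infinity while every ray is eventually carried to its target---is the technical crux, and it is exactly where the codimension afforded by $n\ge4$ and the properness built into the multiray and Mittag-Leffler hypotheses are indispensable.
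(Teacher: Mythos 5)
Your architecture matches the paper's: first a proper homotopy of the defining multirays (hypothesis (a) for the rays limiting to $\Ebad$, the Mittag-Leffler lemma for the rest), then an upgrade to a \textsc{cat} ambient isotopy, then transport of the tubular neighborhood data and extension over the glued handles; you also correctly identify the ambient-isotopy step as the crux. However, there are two genuine gaps. First, the orientation bookkeeping: for an index $s$ outside $\epsilon^{-1}_{\gamma^-_0}(\Ebad\cup\Eo)\cap \epsilon^{-1}_{\gamma^+_0}(\Ebad\cup\Eo)$ the hypotheses impose no orientation constraint at \emph{either} foot, so the mismatch may occur at the foot landing on an \emph{orientable} end (the other foot being the nonorientable Mittag-Leffler one). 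There is no orientation-reversing loop at an orientable end, so your proposed fix does not apply there. The missing idea is the reflection trick used in the paper: simultaneously reversing the local orientations of \emph{both} rays of a single 1-handle extends to a reflection of $\{s\}\times[0,1]\times\R^{n-1}$ and hence does not change $Z_1$; this moves the mismatch to the nonorientable Mittag-Leffler foot, where Lemma~\ref{ML}(b) absorbs it (no orientation condition is needed at nonorientable ends). Dragging a ray around an orientation-reversing loop alone cannot repair a mismatch sitting at the other foot.

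Second, in the ambient-isotopy step your claim that general position makes the homotopy tracks ``embedded and disjoint'' because they have codimension at least two is false precisely in the critical case $n=4$: two generic $2$-dimensional tracks in a $4$-manifold intersect in isolated points, not in the empty set, and a single track is generically only an immersion with isolated double points. The paper's Lemma~\ref{ambient} removes these intersections by finger moves and must verify that properness of the (infinite) homotopy is preserved in doing so; this is exactly the step that fails for knotted lines in $\R^3$, so it cannot be waved through. Relatedly, the theorem is asserted in all three categories, and in \textsc{top} neither general position nor isotopy extension is available off the shelf: the paper reduces \textsc{pl} and \textsc{top} to \textsc{diff} using smoothing theory (Freedman--Quinn in dimension 4) together with Dancis's general position and unknotting results. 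A smaller point: uniqueness of tubular neighborhoods only gives agreement of $\Phi_1\circ\nu_0$ with $\nu_1$ near the core $Y\times\{0\}$, not on all of $Y\times\R^{n-1}$; one must cut the attached handles back to a neighborhood of $S\times\{\frac12\}\times\R^{n-1}$ (which does not change $Z_i$) before extending the homeomorphism over them.
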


It follows that 1-handle attaching is not affected by reparametrization of the rays (a proper homotopy), or changing the auxiliary diffeomorphisms $\varphi^\pm$ and $\rho^\pm$ occurring in Definition~\ref{onehandles} (which only results in changing the parametrization and tubular neighborhood maps, respectively).

\begin{cor}\label{maincor} For an oriented \textsc{cat} $n$-manifold $X$ with $n\ge4$, every countable multiset of (unordered) pairs of Mittag-Leffler ends canonically determines a \textsc{cat} manifold obtained from $X$ by attaching 1-handles at infinity to those pairs of ends, respecting the orientation. \qed
\end{cor}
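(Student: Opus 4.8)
The plan is to deduce this directly from Theorem~\ref{main}, the only real content being to realize the given combinatorial data by honest multirays and then to account for the freedom in that realization. First I would fix a countable index set $S$ in bijection with the given multiset, and for each $s\in S$ choose a labeling of the corresponding unordered pair of ends as $\epsilon^-_s,\epsilon^+_s\in\EML$. Then I would produce locally orienting \textsc{cat} multirays $\gamma^-,\gamma^+\co S\times[0,\infty)\emb X$ with $\epsilon_{\gamma^\pm}(s)=\epsilon^\pm_s$, with the images of $\gamma^-$ and $\gamma^+$ disjoint, and with local orientations induced by the fixed ambient orientation of $X$; attaching 1-handles along them yields a manifold $Z$. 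Existence of such multirays is where the care is needed: one picks a ray limiting to each prescribed end, keeps the images disjoint (automatic by general position since $n\ge4$), and then truncates the domains so that the combined embedding is proper, exactly as in the clustering-to-multiray discussion following Definition~\ref{onehandles}. This is what covers the case of infinitely many handle-feet limiting to a single end.

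Next I would show $Z$ is independent of all choices up to \textsc{cat} homeomorphism. Given two realizations, I would first reduce them to common bookkeeping: relabel the index set by the bijection of multisets matching the two, and, for each handle, possibly swap the roles of $\gamma^-$ and $\gamma^+$. The swap is legitimate because the construction of $Z$ in Definition~\ref{onehandles} is symmetric under the relabeling $(-,+)\mapsto(+,-)$ together with the reflection $t\mapsto 1-t$ of the $[0,1]$ factor of the attached handle (adjusting $\rho^\pm$ to keep $\varphi^\pm\times\rho^\pm$ orientation preserving); this yields a canonically homeomorphic $Z$, which is exactly why unordered pairs suffice. After this reduction the two realizations share the same index set $S$ and the same end functions $\epsilon_{\gamma^\pm_i}$, so Theorem~\ref{main} applies.

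It then remains only to verify hypotheses (a)--(c) of Theorem~\ref{main}, and here the Mittag-Leffler and orientability assumptions make everything automatic. Since every end in play lies in $\EML$, the index subset $\epsilon^{-1}(\Ebad)$ is empty and the proper homotopy required in (a) is vacuous. Since $X$ is oriented, every neighborhood of infinity is orientable, so $\E(X)=\Eo$; moreover the local orientation of each ray is the one induced by the fixed orientation of $X$, whence for \emph{both} realizations it induces the same canonical orientation on each end. Thus (b), and symmetrically (c), hold. Theorem~\ref{main} then produces a \textsc{cat} homeomorphism $Z_0\to Z_1$ carrying $X$ onto itself by a map \textsc{cat} ambiently isotopic in $X$ to the identity, and this last property is precisely the sense in which the multiset ``canonically determines'' the manifold.

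I expect the only genuine obstacle to be the construction in the first paragraph, namely arranging disjointness and properness of the multirays when infinitely many feet cluster at a single Mittag-Leffler end; the rest is a direct appeal to Theorem~\ref{main} together with the routine symmetry of Definition~\ref{onehandles}.
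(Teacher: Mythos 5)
Your proposal is correct and matches the paper's intent exactly: the corollary is stated with no separate proof because it is precisely the specialization of Theorem~\ref{main} in which $\epsilon^{-1}(\Ebad)=\emptyset$ and $\E(X)=\Eo$, so hypotheses (a)--(c) are vacuous or automatic, with the realization of the data by a proper disjoint multiray handled by the truncation remark following Definition~\ref{onehandles}. Your observations about the swap symmetry of Definition~\ref{onehandles} (justifying unordered pairs) and about clustering infinitely many feet at one end are exactly the details the paper leaves implicit.
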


Since the end of $\R^n$ is Mittag-Leffler, we immediately obtain cancellation of 0/1-handle pairs at infinity:

\begin{cor}\label{cancel} For $n\ge4$, every end sum of a \textsc{cat} $n$-manifold $X$ with $\R^n$ (or countably many copies of $\R^n$) is \textsc{cat} homeomorphic to $X$. \qed
\end{cor}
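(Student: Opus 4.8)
The plan is to realize end summing with $\R^n$ as a cancellation that is entirely \emph{local} near the defining ray, so that the glued-in copy of $\R^n$ merely reconstitutes a tubular neighborhood of that ray. Using the ``remove the interiors of the closed tubular neighborhoods and glue along the resulting $\R^{n-1}$ boundaries'' description from Section~\ref{handles}, I would write the end sum as $Z=(X\setminus\inter N)\cup_\phi(\R^n\setminus\inter M)$, where $N$ and $M$ are closed tubular neighborhoods of the defining ray $\gamma$ in $X$ and of a ray $\delta$ in $\R^n$, with $\phi$ the gluing homeomorphism of the two frontier copies of $\R^{n-1}$. Since $X=(X\setminus\inter N)\cup_\iota N$ is simply $X$ with $N$ glued back by the natural frontier identification $\iota$, it suffices to produce a \textsc{cat} homeomorphism $h\co \R^n\setminus\inter M\to N$ satisfying $\iota\circ(h|_\partial)=\phi$; then $H=\id$ on $X\setminus\inter N$ and $H=h$ on $\R^n\setminus\inter M$ descend to a \textsc{cat} homeomorphism $Z\to X$ that is the identity off $N$.

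The key geometric input is that $\R^n\setminus\inter M$ is itself a closed half-space $[0,\infty)\times\R^{n-1}$, exactly like the closed tubular neighborhood $N$. I would verify this using that a proper ray in $\R^n$ is unknotted ($n\ge2$): in the model $\R^n\cong\inter D^n$ the ray $\delta$ limits to a boundary point $p$, and $M$ corresponds to a cap meeting $\partial D^n$ in a disk about $p$, so the complement visibly restricts to a half-space inside $\inter D^n$ with frontier $\R^{n-1}$. Because every self-homeomorphism of $\R^{n-1}$ extends over $[0,\infty)\times\R^{n-1}$ (via a boundary collar, as $g\times\id$), the boundary behavior of $h$ can be prescribed to match $\phi$, completing the construction. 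Orientations cause no trouble, since the (orientable) end of $\R^n$ and the orientation-preserving attaching data are respected by this local replacement. For countably many copies of $\R^n$, the defining rays form a multiray with disjoint images, so the $N_k\subset X$ may be chosen pairwise disjoint and the replacement carried out simultaneously; as the family is locally finite and $H$ is the identity outside $\bigcup N_k$, the resulting map is a \textsc{cat} homeomorphism.

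Conceptually this is precisely the $0/1$-handle cancellation at infinity advertised before the statement: a copy of $\R^n$ is a $0$-handle at infinity and the attaching $1$-handle cancels it, so the well-definedness supplied by Theorem~\ref{main} (the end of $\R^n$ being Mittag-Leffler, whence Corollary~\ref{maincor}) guarantees that \emph{every} such end sum coincides with this one canonical manifold, while the homeomorphism with $X$ is the explicit local cancellation above. The one step needing genuine care — and hence the main obstacle — is the geometric lemma identifying $\R^n\setminus\inter M$ as a closed half-space glued along the correct $\R^{n-1}$ frontier; the identification of the two half-spaces is routine up to homeomorphism, and the only subtlety, arranging compatibility with the gluing map $\phi$, is dispatched by the extension-of-boundary-homeomorphisms observation. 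I note that this cancellation is elementary enough to hold for all $n\ge2$; the hypothesis $n\ge4$ is inherited only from the surrounding uniqueness framework.
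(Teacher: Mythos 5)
Your argument is correct for $n\ge4$ and is essentially the paper's: the corollary is stated with no written proof precisely because, once Corollary~\ref{maincor} (via the Mittag-Leffler end of $\R^n$) reduces to a single convenient choice of ray and tubular neighborhood in the $\R^n$ summand, the local cancellation you spell out --- the complement of an open tubular neighborhood of an unknotted ray in $\R^n$ is again a closed half-space, which glues back in to reconstitute $N$ and hence $X$ --- is the intended ``immediate'' step. One genuine error in a side remark: your parenthetical claim that a proper ray in $\R^n$ is unknotted for all $n\ge2$, and the concluding assertion that the cancellation therefore holds for all $n\ge2$, both fail at $n=3$. Rays in $\R^3$ can be knotted, $\R^3\setminus\inter M$ need not be a half-space, and indeed Myers' examples (cited in the paper) show that end sums of two copies of $\R^3$ along knotted rays yield uncountably many homeomorphism types, so the corollary is false for $n=3$. (Also note that in \textsc{top} the unknotting of rays in $\R^n$, $n\ge4$, is itself nontrivial, resting on Homma's method; alternatively one can bypass it by using Theorem~\ref{main} to replace the ray in $\R^n$ by a radial one.)
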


\noindent See Section~\ref{slides} for further discussion of 0-handles at infinity. This corollary shows that end summing with an exotic $\R^4$ doesn't change the homeomorphism type of a smooth 4-manifold (although it typically changes its diffeomorphism type); cf.\ Section~\ref{Smooth}. It also shows:

\begin{cor}\label{spherecollar} Suppose $X_0$ and $X_1$ are connected, oriented \textsc{cat} $n$-manifolds with $n\ge4$, and that $X_0$ has an end $\epsilon$ that is \textsc{cat} collared by $S^{n-1}$. Then all manifolds obtained as the oriented end sum of $X_0$ with $X_1$ at the end $\epsilon$ are \textsc{cat} homeomorphic.
\end{cor}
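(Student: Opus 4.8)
The plan is to localize the end sum near the collared end $\epsilon$ and then repackage it as an ordinary connected sum, where independence of the choices is classical. Choose a collar neighborhood $E\cong S^{n-1}\times[0,\infty)$ of $\epsilon$ with frontier sphere $\Sigma=\partial E\cong S^{n-1}$, so that $X_0=C\cup_\Sigma E$ with $C=\overline{X_0\setminus E}$ connected and $\partial C=\Sigma$. Since $\Sigma$ is a sphere and $n\ge4$, the end $\epsilon$ is simply connected at infinity, hence Mittag-Leffler; by Lemma~\ref{ML}(a) any two rays to $\epsilon$ are properly homotopic, so by Theorem~\ref{main} the defining ray in $X_0$ affects the result only through $\epsilon$ and its fixed orientation. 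I therefore fix a standard ray $\gamma_0=\{*\}\times[1,\infty)\subset\inter E$. The only remaining freedom is the ray $\gamma_1$ in $X_1$ together with the end of $X_1$ it runs to; this is precisely the freedom \emph{not} controlled by Theorem~\ref{main}, since that end of $X_1$ may lie in $\Ebad$.

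First I would note that the $1$-handle at infinity is attached entirely within $\inter E\sqcup X_1$, leaving a fixed collar of $\Sigma$ untouched. Writing $Z_{\gamma_1}$ for the resulting end sum and $E\natural_{\gamma_1}X_1$ for the end sum of the collar with $X_1$ (along $\gamma_0$ and $\gamma_1$), this localization gives $Z_{\gamma_1}=C\cup_\Sigma(E\natural_{\gamma_1}X_1)$. To identify $E\natural_{\gamma_1}X_1$, I compare with $\R^n=B^n\cup_\Sigma E$ (unit ball union exterior collar). Using the \emph{same} $E$, $\gamma_0$, $\gamma_1$ and tubular neighborhood maps, the same localization yields $\R^n\natural_{\gamma_1}X_1=B^n\cup_\Sigma(E\natural_{\gamma_1}X_1)$, while Corollary~\ref{cancel} gives $\R^n\natural_{\gamma_1}X_1\cong X_1$. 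Hence $B^n\cup_\Sigma(E\natural_{\gamma_1}X_1)\cong X_1$; as $B^n$ is a collared ball glued along $\Sigma=\partial(E\natural_{\gamma_1}X_1)$, its image is a locally flat ball $B_{\gamma_1}\subset\inter X_1$ and $E\natural_{\gamma_1}X_1\cong\overline{X_1\setminus B_{\gamma_1}}$, with $\Sigma$ identified with $\partial B_{\gamma_1}$.

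Next I would reassemble. Capping $C$ with a ball, set $\widehat C=C\cup_\Sigma D^n$, a fixed connected oriented $n$-manifold independent of $\gamma_1$. Then
\[
Z_{\gamma_1}=C\cup_\Sigma(E\natural_{\gamma_1}X_1)\cong C\cup_\Sigma\overline{X_1\setminus B_{\gamma_1}}=\widehat C\,\#\,X_1,
\]
the connected sum performed at $B_{\gamma_1}\subset\inter X_1$ (remove $D^n$ from $\widehat C$ to recover $C$, remove $B_{\gamma_1}$ from $X_1$, and glue along $\Sigma$). The orientations built into the oriented end sum make the gluing across $\Sigma$ orientation-reversing and identify it with the standard connected-sum gluing coming from the tubular-neighborhood charts. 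Since $X_1$ and $\widehat C$ are connected and oriented, the oriented connected sum $\widehat C\,\#\,X_1$ is well defined, independent of the location of the summing ball; this is the classical uniqueness of connected sum. Hence $Z_{\gamma_1}\cong\widehat C\,\#\,X_1$ is independent of $\gamma_1$ and of the end of $X_1$ used, and independence of the collar $E$ and of $\gamma_0$ follows from uniqueness of collars together with the Mittag-Leffler property of $\epsilon$.

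The main obstacle is the middle step: recognizing that end summing with the collar produces exactly a punctured $X_1$, the puncture a standard locally flat ball, and that the reassembly is a bona fide connected sum rather than a twisted gluing. The delicate point is the identification along $\Sigma$: a priori the ball complements for $\gamma_1$ and $\gamma_1'$ are glued to $C$ by self-homeomorphisms of $\Sigma=S^{n-1}$ that could differ by a nontrivial mapping class (an issue in \textsc{diff}). Passing to the connected-sum description dispatches this, because the tubular-neighborhood charts make the gluing the standard one, and the well-definedness of the oriented connected sum already absorbs the disk theorem and the orientation-reversing identification; no separate mapping-class-group analysis is then required.
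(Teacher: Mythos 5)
Your proof is correct and takes essentially the same route as the paper's one-line argument: write $X_0=\widehat C\,\#\,\R^n$ (your $\widehat C$ is the paper's $X$), absorb the end sum into the $\R^n$ summand via Corollary~\ref{cancel}, and conclude $Z\cong\widehat C\,\#\,X_1$ by the classical well-definedness of oriented connected sum. The extra care you take with the identification along $\Sigma$ is a faithful expansion of what the paper leaves implicit.
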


\begin{proof}
Write $X_0$ as a connected sum $X\#\R^n$. Then any such end sum is $X\# X_1$.
\end{proof}

The following corollary shows that 1-handles at infinity respect Stein structures. This  will be applied to 4-manifold smoothing theory in Theorem~\ref{JB}.

\begin{cor}\label{stein1h} Every manifold $Z$ obtained from a Stein manifold $X$ by attaching 1-handles at infinity, respecting the complex orientation, admits a Stein structure. The resulting almost-complex structure on $Z$ can be assumed to restrict to the given one on $X$, up to homotopy.
\end{cor}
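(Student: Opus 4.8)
The plan is to reduce the statement to the standard existence theorems for Stein structures: Eliashberg's theorem in real dimension $n=2m\ge 6$ and Gompf's handlebody criterion in dimension $4$. Recall (this is exactly what underlies Corollary~\ref{stein}) that a Stein manifold of complex dimension $m$ carries an exhausting strictly plurisubharmonic Morse function $\varphi$ with all critical indices at most $m$, and here $m\ge 2$. The two inputs needed to run the existence theorems on $Z$ are: (i) an exhausting Morse function on $Z$ with indices $\le m$, and (ii) an almost complex structure on $Z$ restricting to $J_X$ (or a structure homotopic to it). I would produce both, working near infinity where the $1$-handles live. For building the Morse function I do not need plurisubharmonicity of $\varphi$, only that an exhausting Morse function with indices $\le m$ exists; Eliashberg's theorem restores integrability at the end.

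First I would put the defining data in standard position. Each end of $X$ receiving a handle is Mittag-Leffler by Corollary~\ref{stein}, so Theorem~\ref{main} guarantees that $Z$ depends only on the ends and local orientations involved, and I am free to replace the multirays $\gamma^\pm$ by any conveniently chosen properly homotopic ones. I would take them ascending (gradient-like) for $\varphi$ and, after a modification of $\varphi$ near the fingers that introduces no critical points of index above $1$, arrange that on each finger $\nu^\pm(S\times[0,\infty)\times\R^{n-1})$ the function has the standard form $\varphi=t+|y|^2$ in coordinates $(t,y)\in[0,\infty)\times\R^{n-1}$; note $d\varphi=dt+2y\,dy$ never vanishes, so there are no critical points on the fingers. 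Now define $\psi$ on $Z$ to equal $\varphi$ off the handle regions. In each $1$-handle the slab of Definition~\ref{onehandles} joins two fingers. When the two fingers belong to distinct ends (end-sum case) their union with the slab is modeled on $\R\times\R^{n-1}$ with coordinate $u$ running down both fingers, and I set $\psi=f(u)+|y|^2$ with $f\to+\infty$ as $u\to\pm\infty$ matching $t$, creating one new critical point of index $0$. When both fingers limit to the same end (self-sum case), the same model creates one new critical point of index $1$. In either case the only new indices are $\le 1\le m$, so $\psi$ has indices $\le m$. Since the rays are proper and the $\nu^\pm$ have disjoint images, these modifications occur in a locally finite family of regions escaping to infinity on which $\psi$ still tends to $+\infty$; hence $\psi$ is a smooth exhausting Morse function even when infinitely many handles are attached.

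For the almost complex structure I would extend $J_X$ across the $1$-handles. Each handle deformation retracts to an arc attached to $X$ at two points where $J_X$ is already defined, and the hypothesis that the handle respects the complex orientation makes the two local orientations compatible. The fiber $SO(2m)/U(m)$ of the bundle of compatible complex structures on $TZ$ is connected, so the obstruction to extending a section over an index-$1$ handle vanishes; thus $J_X$ extends to an almost complex structure $J_Z$ with $J_Z|_X=J_X$. For $n\ge 6$ I would then invoke Eliashberg's theorem (see Cieliebak--Eliashberg): the pair $(J_Z,\psi)$ promotes to a Stein structure on $Z$ whose underlying almost complex structure is homotopic to $J_Z$, and restricting that homotopy to $X$ shows it restricts to $J_X$ up to homotopy. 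For $n=4$ ($m=2$), where Eliashberg's theorem does not apply, I would instead use Gompf's handlebody characterization of Stein surfaces: $X$ already carries a Stein handle structure, and attaching the $1$-handles at infinity adds only $0$- and $1$-handles, so no new Legendrian attaching map or framing condition on index-$2$ handles arises and the Stein structure extends over $Z$.

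The main obstacle I anticipate is the careful construction of $\psi$ in the genuinely noncompact, ``at infinity'' setting: one must verify that the local models glue smoothly onto $\varphi$ along the fingers, that the exhaustion property survives infinitely many simultaneous handle attachments, and that all indices remain $\le m$, so that reconciling the gluing data of Definition~\ref{onehandles} with a clean Morse-theoretic model is where the real work lies. A secondary point requiring care is citing the correct open-manifold version of Gompf's criterion so that the dimension-$4$ argument covers infinite collections of handles.
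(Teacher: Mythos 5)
Your overall strategy --- produce an exhausting Morse function on $Z$ with indices at most $m$ together with a compatible almost-complex structure, then invoke Eliashberg (or the Stein handlebody criterion when $m=2$) --- is the same as the paper's, and your treatment of the almost-complex structure is fine. But the step you yourself flagged as ``where the real work lies,'' namely building $\psi$, is where your sketch breaks down, in two concrete ways. First, the local model is set up backwards. In Definition~\ref{onehandles} the slab $[0,1]\times\R^{n-1}$ \emph{contains} the two open tubular neighborhoods, with the infinities of the two rays ($t\to\infty$) glued to the two sides of the middle slice $\{\frac12\}\times\R^{n-1}$; the feet of the rays, where $\psi$ must match $\varphi$, sit at $u=0,1$. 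So along the core $\psi$ must rise from the two feet and be capped off at the middle slice: the new critical point has index $1$, not index $0$, in the end-sum case. Indeed index $0$ is impossible there: if $X=X_1\sqcup X_2$ and $\psi$ has only one new critical point, that point must have index $1$, since Morse theory would otherwise exhibit the connected manifold $Z$ as disconnected. (The bound ``index $\le 1\le m$'' survives, but the model producing it does not.) Second, the normalization $\varphi=t+|y|^2$ in the \emph{open} tubular neighborhood coordinates $(t,y)\in[0,\infty)\times\R^{n-1}$ cannot hold for an exhausting $\varphi$ on $X$: as $|y|\to\infty$ one approaches the frontier of the closed tubular neighborhood, which consists of interior points of $X$ at finite distance, so $\varphi$ would be unbounded on a compact set. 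This open-versus-closed tubular neighborhood issue is exactly the subtlety the paper has to confront explicitly (see the proof of Lemma~\ref{ABcancel}, where carefully shrinking disks of radius $1/(1-t)$ are used for precisely this reason), and your formulas do not survive it.

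The paper's proof sidesteps all of this Morse-function surgery. It presents $X$ as the interior of a (possibly infinite) self-indexed handlebody with handles of index at most $m$ (appendix of \cite{yfest}, which for $m=2$ also preserves the framing condition encoding the almost-complex structure). Attaching \emph{compact} 1-handles to that handlebody and then passing to the interior realizes 1-handles at infinity along \emph{some} choice of rays; Corollaries~\ref{stein} and \ref{maincor} then guarantee that this choice gives the same $Z$ as the given one, since the ends involved are Mittag-Leffler. (Infinitely many new 1-handles are accommodated by first adding canceling 0/1-pairs.) Eliashberg's theorem applied to the enlarged handlebody interior finishes the proof. If you want to salvage your direct approach, the fix is to replace your local model by this reduction to compact handle attachment --- which is really the content of Corollary~\ref{maincor} that your proposal invokes only to reposition rays, rather than to do the actual work.
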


\begin{proof} Since every open, oriented surface has a Stein structure and a contractible space of almost complex structures, we assume $X$ has real dimension $2m\ge4$. Since $X$ is Stein, it has an exhausting Morse function with indices at most $m$. It can then be described as the interior of a smooth (self-indexed) handlebody whose handles have index at most $m$. This is well-known when there are only finitely many critical points. A proof of the infinite case is given in the appendix of \cite{yfest}, which also shows that when $m=2$ one can preserve the extra framing condition that arises for 2-handles, encoding the given almost-complex structure. By Corollaries~\ref{stein} and \ref{maincor}, we can realize the 1-handles at infinity by attaching compact handles to the handlebody before passing to the interior (and after adding infinitely many canceling 0-1 pairs if necessary to accommodate infinitely many new 1-handles, avoiding compactness issues). Now we can convert the handlebody interior back into a Stein manifold by Eliashberg's Theorem; see \cite{CE}. The almost-complex structures then correspond by construction.
\end{proof}

The proof of Theorem~\ref{main} follows from two lemmas. The first guarantees that (a) Mittag-Leffler ends are well-defined and (b) singular multirays with a given Mittag-Leffler end function are unique up to proper homotopy.

\begin{lem}[a]\label{ML} If $(X,\gamma)$ is a Mittag-Leffler pair, then every singular ray determining the same end as $\gamma$ is properly homotopic to $\gamma$. In particular, the Mittag-Leffler condition for ends is independent of choice of singular ray, so the subset $\EML\subset\E$ is well-defined.
\item[(b)] Let $\gamma_0,\gamma_1\co S\times [0,\infty)\emb X$ be locally orienting singular multirays with the same end function. Suppose that this function $\epsilon_{\gamma_0}=\epsilon_{\gamma_1}$ has image in $\EML$, and that for each $s$ with $\epsilon_{\gamma_0}(s)\in\Eo$, the corresponding locally orienting singular rays of $\gamma_0$ and $\gamma_1$ induce the same orientation (depending on $s$) of the end $\epsilon_{\gamma_0}(s)$. Then there is a proper homotopy from $\gamma_0$ to $\gamma_1$, respecting the given local orientations.
\end{lem}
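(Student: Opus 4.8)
The plan is to prove (a) by an inductive square-filling construction and then to derive (b) from it, isolating the single step where the Mittag-Leffler hypothesis is essential. For (a), let $\gamma,\gamma'$ be singular rays determining the same end $\epsilon$, fix a neighborhood system $V_1\supset V_2\supset\cdots$ of $\epsilon$ with each $V_i$ connected, and reparametrize so that $\gamma([i,\infty))$ and $\gamma'([i,\infty))$ lie in $V_i$. I would construct a proper homotopy $H\co[0,\infty)\times[0,1]\to X$ one unit square $R_i=[i,i+1]\times[0,1]$ at a time, imposing the depth condition $H(R_i)\subset V_{i-1}$; since the $V_{i-1}$ form a neighborhood system of infinity, this condition is exactly what forces $H$ to be proper. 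The construction is driven by connecting paths $\beta_i\subset V_i$ from $\gamma(i)$ to $\gamma'(i)$, with $H$ restricted to $\partial R_i$ equal to the loop $L_i=(\gamma|[i,i+1])\cdot\beta_{i+1}\cdot(\gamma'|[i,i+1])^{-1}\cdot\beta_i^{-1}$; filling $R_i$ inside $V_{i-1}$ is then possible precisely when $[L_i]=1$ in $\pi_1(V_{i-1})$.

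The inductive step chooses $\beta_{i+1}$ after $\beta_i$ so as to kill $[L_i]$, and this is the only place that uses Mittag-Leffler. Starting from any $\beta^0_{i+1}\subset V_{i+1}$, the obstruction is the image $g\in\pi_1(V_{i-1})$ of the loop $g_0=[\,\eta_{i+1}\cdot(\beta^0_{i+1})^{-1}\,]$, where $\eta_{i+1}=(\gamma|[i,i+1])^{-1}\cdot\beta_i\cdot(\gamma'|[i,i+1])$; since every arc of $g_0$ lies in $V_i$, we have $g_0\in\pi_1(V_i)$. Correcting $\beta^0_{i+1}$ by a loop $\delta\subset V_{i+1}$ changes $g$ by the image of $[\delta]$ under $\pi_1(V_{i+1})\to\pi_1(V_{i-1})$, so I can achieve $[L_i]=1$ exactly when $g\in\mathrm{im}(\pi_1(V_{i+1})\to\pi_1(V_{i-1}))$. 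After passing to a subsequence realizing the Mittag-Leffler condition, this image is the stable image $I_{i-1}$, and $\mathrm{im}(\pi_1(V_i)\to\pi_1(V_{i-1}))$ is the same $I_{i-1}$; as $g$ is the image of $g_0\in\pi_1(V_i)$, it lies in $I_{i-1}$ and the desired $\delta$ exists. This is the heart of the matter: the correction we need lives one neighborhood deeper than the obstruction naturally does, and Mittag-Leffler says precisely that descending one level does not shrink the relevant image. (Base-point shifts are absorbed by conjugating along $\gamma$, which preserves the stable-image property, and the first square may be filled crudely in $X$ since properness concerns only the tail.) The ``in particular'' clause then follows because the fundamental progroup is a proper-homotopy invariant: any $\gamma'$ with the same end is properly homotopic to $\gamma$, hence has the same, Mittag-Leffler, progroup, so $\EML\subset\E$ is well-defined.

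For (b) I would apply (a) to each component: for every $s$, the rays $\gamma_0|_{\{s\}\times[0,\infty)}$ and $\gamma_1|_{\{s\}\times[0,\infty)}$ determine the same end in $\EML$, hence are properly homotopic. Two further points remain, both routine. First, properness of the assembled homotopy over all of $S$: using that $\gamma_0$ and $\gamma_1$ are proper embeddings, one arranges the componentwise homotopies to sink into ever-deeper neighborhoods of infinity as one moves outward in $S\times[0,\infty)$, which makes their union proper. Second, local orientations. For $s$ with $\epsilon_{\gamma_0}(s)\in\Eo$, the tail of the homotopy lies in a connected orientable neighborhood $N$ of the end; orienting $N$ to induce the (by hypothesis common) end orientation restricts to the given local orientations on both tails, and since $\gamma_i^*TX$ is trivial over each contractible homotopy component, agreement on the tail propagates everywhere. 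For $s$ with $\epsilon_{\gamma_0}(s)$ nonorientable no hypothesis is imposed and none is needed: a nonorientable neighborhood of the end carries an orientation-reversing loop, and since the homotopy is singular we may drag a finger around such a loop, deep in the end, to realize whichever correspondence of local orientations is required. I expect the $\pi_1$-correction in (a) to be the sole genuine obstacle; the multiray assembly and the orientation matching are careful but routine.
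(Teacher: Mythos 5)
Your proof is correct and is essentially the paper's own argument: your square-filling step, in which the obstruction lies in $\mathrm{im}\left(\pi_1(V_i)\to\pi_1(V_{i-1})\right)=\mathrm{im}\left(\pi_1(V_{i+1})\to\pi_1(V_{i-1})\right)$ and is therefore killed by correcting the connecting path one level deeper while filling only one level back (preserving properness), is exactly the geometric procedure that the paper's inductive choice of the classes $y_{i+1}$ and the telescoping $z_i=x_iy_{i+1}$, $z_iz_{i+1}^{-1}=x_i$ encodes algebraically. Your treatment of part (b) --- componentwise homotopies confined to ever-deeper neighborhoods of infinity, plus the orientation bookkeeping via orientation-reversing loops in the nonorientable case --- likewise matches the paper's.
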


The first sentence and its converse are essentially Proposition~16.1.2 of \cite{Ge08}, which is presented as an immediate consequence of two earlier statements: Proposition~16.1.1 asserts that the set of proper homotopy classes of singular rays approaching an arbitrary end corresponds bijectively to the derived limit $\lim^1_\leftarrow\pi_1(U_i,\gamma(i))$ of a neighborhood system $U_i$ of infinity; Theorem~11.3.2 asserts that an inverse sequence of countable groups $G_i$ is Mittag-Leffler if and only if $\lim^1_\leftarrow G_i$ has only one element. We follow those proofs but considerably simplify the argument, eliminating use of derived limits, by focusing on the Mittag-Leffler case. This reveals the underlying geometric intuition: If an end $\epsilon$ is topologically collared by a neighborhood identified with $\R\times M$, and $\gamma=(\gamma_\R,\gamma_M)\co[0,\infty)\to\R\times M$ is a singular ray, we can assume after a standard proper homotopy of the first component that $\gamma_\R\co[0,\infty)\to\R$ is inclusion. Then the proper homotopy $\gamma_s(t)=(t,\gamma_M((1-s)t))=\frac{1}{1-s}\gamma((1-s)t)$ (where the last multiplication acts only on the first factor) stretches the image of $\gamma$, pushing any winding in $M$ out toward infinity, so that when $s\to1$ the ray becomes a standard radial ray. If, instead, $\epsilon$ only has a neighborhood system with $\pi_1$-surjective inclusions, we can compare two singular rays using an initial proper homotopy after which they agree on $\Z^+\subset[0,\infty)$, and so only differ by a proper sequence of loops. Then $\pi_1$-surjectivity again allows us to push the differences out to infinity: inductively collapse loops by transferring their homotopy classes to more distant neighborhoods of infinity, so that the resulting homotopy sends one ray to the other. In the general Mittag-Leffler case, we still have enough surjectivity to push each loop to infinity after pulling it back a single level in the neighborhood system (with properness preserved because we only pull back one level). The following proof efficiently encodes this procedure with algebra.

\begin{proof} First we prove (a), showing that an arbitrary singular ray $\gamma'$ determining the same Mittag-Leffler end as $\gamma$ is properly homotopic to it. We also keep track of preassigned local orientations along the two singular rays. If $\epsilon_\gamma$ is orientable, we assume these local orientations induce the same orientation on $\epsilon_\gamma$ (as in (b)). Let $\{U_i\}$ be a neighborhood system of infinity, arranged (by passing to a subsequence if necessary) so that each $j$ is $i+1$ in the definition of the Mittag-Leffler condition, and that the component of $U_1$ containing $\epsilon_\gamma$ is orientable if $\epsilon_\gamma$ is. Then reparametrize $\gamma$ so that each $\gamma([i,\infty))$ lies in $U_i$. Reparametrize $\gamma'$ similarly, then arrange it to agree with $\gamma$ on $\Z^+$ by inductively moving $\gamma'$ near each $i\in\Z^+$ separately, with compact support inside $U_i$. The limiting homotopy is then well-defined and proper. If $\epsilon_\gamma$ is nonorientable, then so is the relevant component of each $U_i$, so we can assume (changing the homotopy via orientation-reversing loops as necessary) that the local orientations along the two singular rays agree at each $i$. (This is automatic when $\epsilon_\gamma$ is orientable.) The two singular rays now differ by a sequence of orientation-preserving loops, representing classes $x_i\in\pi_1(U_i,\gamma(i))$ for each $i\ge 1$. Inductively choose orientation-preserving classes $y_i\in\pi_1(U_i,\gamma(i))$ for all $i\ge2$ starting from an arbitrary $y_2$, and for $i\ge1$ choosing $y_{i+2}\in\pi_1(U_{i+2},\gamma(i+2))$ to have the same image in $\pi_1(U_i,\gamma(i))$ as $x_{i+1}^{-1}y_{i+1}\in\pi_1(U_{i+1},\gamma(i+1))$. (This is where the Mittag-Leffler condition is necessary.) For each $i\ge 1$, let $z_i=x_iy_{i+1}\in\pi_1(U_i,\gamma(i))$ (where we suppress the inclusion map). In that same group, we then have $z_iz_{i+1}^{-1}=x_iy_{i+1}y_{i+2}^{-1}x_{i+1}^{-1}=x_i$. After another proper homotopy, we can assume the two singular rays and their induced local orientations on $X$ agree along $\frac12\Z^+$ and give the sequence $z_1,z_2^{-1},z_2,z_3^{-1},\dots$ in $U_1,U_1,U_2,U_2,\dots$. Now a proper homotopy fixing $\Z^++\frac12$ cancels all loops between these points and eliminates $z_1$ (moving $\gamma'(0)$) so that the two singular rays coincide. This completes the proof of (a), and also (since $\EML$ is now well-defined) the case of (b) with $S$ a single point.

For the general case of (b), we wish to apply the previous case to each pair of of singular rays separately. The only issue is properness of the resulting homotopy of singular multirays. Let $\{W_j\}$ be a neighborhood system of infinity with $W_1=X$. For each $s\in S$, find the largest $j$ such that $W_j$ contains both rays indexed by $s$, and apply the previous case inside that $W_j$. Since the singular multirays are proper, each $W_j$ contains all but finitely many pairs of singular rays, guaranteeing that the combined homotopy is proper.
\end{proof}

\begin{remark} To see the correspondence of this proof with the geometric description, first consider the case with all inclusion maps $\pi_1$-surjective. Then the argument simplifies: We can just define $z_1=1$, and inductively choose $z_{i+1}$ to be any pullback of $x_i^{-1}z_i$. Then $z_i$ is a pullback of $(x_1\cdots x_{i-1})^{-1}$ to $U_i$, exhibiting the loops being transferred toward infinity.
\end{remark}

To upgrade a proper homotopy of multirays to an ambient isotopy, we need the following lemma.

\begin{lem}\label{ambient} Suppose that $X$ is a \textsc{cat} $n$-manifold with $n\ge 4$ and $Y$ is a \textsc{cat} 1-manifold with $b_1(Y)=0$. Let $\Gamma\co I\times Y\emb \inter X$ be a topological proper homotopy, between \textsc{cat} embeddings $\gamma_i$ ($i=0,1$) that extend to \textsc{cat} tubular neighborhood maps $\nu_i\co Y\times\R^{n-1}\emb X$ whose local orientations correspond under $\Gamma$. Then there is a \textsc{cat} ambient isotopy $\Phi\co I\times X\to X$, supported in a preassigned neighborhood of $\im\Gamma$, such that $\Phi_0=\id_X$ and $\Phi_1\circ\nu_0$ agrees with $\nu_1$ on a neighborhood of $Y\times\{0\}$ in $Y\times\R^{n-1}$.
\end{lem}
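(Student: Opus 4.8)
The plan is to upgrade the given proper homotopy to a proper \textsc{cat} ambient isotopy in three stages: first promote $\Gamma$ to a proper \textsc{cat} \emph{isotopy} from $\gamma_0$ to $\gamma_1$, then integrate this to an ambient isotopy of $X$ by isotopy extension, and finally match normal framings so as to pass from the core $Y$ to a neighborhood of the zero section of $\nu_0$. For the first stage I would consider the level-preserving track $\tilde\Gamma\co I\times Y\to I\times X$, $(t,y)\mapsto(t,\Gamma(t,y))$, which restricts to the \textsc{cat} embeddings $\gamma_0,\gamma_1$ on $\{0\}\times Y$ and $\{1\}\times Y$. I would approximate $\tilde\Gamma$ rel $\{0,1\}\times Y$, keeping it level-preserving, by a \textsc{cat} map (this step is vacuous when $\textsc{cat}=\textsc{top}$, and is standard \textsc{cat} approximation of a continuous map fixed where it is already \textsc{cat}), then put it in general position rel endpoints. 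Level-preserving double points require equal parameters and $\Gamma(t,y)=\Gamma(t,y')$ with $y\ne y'$, so they sweep out a set of dimension $3-n<0$ for $n\ge4$; hence a small perturbation makes $\tilde\Gamma$ an embedding, i.e.\ yields a proper \textsc{cat} isotopy $\{\hat\gamma_t\}$ from $\gamma_0$ to $\gamma_1$. All perturbations are small and supported near $\im\Gamma$. As in the proof of Lemma~\ref{ML}, properness for a multiray $Y=S\times[0,\infty)$ is arranged using a neighborhood system $\{W_j\}$ of infinity: all but finitely many components of the track lie in each $W_j$, so performing the perturbation component-by-component (deep components inside deep $W_j$) keeps the combined isotopy proper.

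For the second stage I would apply the \textsc{cat} isotopy extension theorem to the proper \textsc{cat} isotopy $\hat\gamma_t$ (locally flat in \textsc{pl} and \textsc{top}), with support in a preassigned neighborhood of $\im\Gamma$, to obtain a \textsc{cat} ambient isotopy $\Psi\co I\times X\to X$ with $\Psi_0=\id_X$ and $\Psi_1\circ\gamma_0=\gamma_1$. Since the isotopy is proper and its support is a neighborhood of a proper submanifold, isotopy extension returns a proper ambient isotopy supported near $\im\Gamma$.

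For the third stage, $\Psi_1\circ\nu_0$ and $\nu_1$ are then two \textsc{cat} tubular neighborhood maps of the single embedded $1$-manifold $\gamma_1(Y)$, and since $\Psi$ is built from $\Gamma$ they induce the same local orientation along $\gamma_1(Y)$ (this is where the hypothesis that the local orientations of $\nu_0,\nu_1$ correspond under $\Gamma$ is used). Because $b_1(Y)=0$, each component of $Y$ is contractible, so over each component the only invariant distinguishing two tubular neighborhood germs is the induced local orientation; as these agree, uniqueness of \textsc{cat} tubular neighborhoods provides a \textsc{cat} ambient isotopy $\Psi'$ fixing $\gamma_1(Y)$ and supported near it that carries $\Psi_1\circ\nu_0$ to $\nu_1$ on a neighborhood of $Y\times\{0\}$. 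Concatenating, $\Phi=\Psi'*\Psi$ is the required ambient isotopy.

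The main obstacle is the first stage: simultaneously promoting a merely \emph{topological} proper homotopy to a \textsc{cat} embedded track rel the \textsc{cat} endpoints while preserving properness across the (possibly infinitely many) components of a multiray. The codimension hypothesis $n\ge4$ is exactly what forces the track to be generically embedded, and the properness bookkeeping parallels that already carried out in Lemma~\ref{ML}; the remaining ingredients (isotopy extension and tubular-neighborhood uniqueness) are standard in each category.
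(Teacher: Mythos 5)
Your stages 2 and 3, and stage 1 in the cases \textsc{cat}=\textsc{diff} and \textsc{cat}=\textsc{pl}, are essentially sound, and there your route genuinely differs from the paper's: you keep the perturbation level-preserving, so that double points at a common time parameter sweep out a set of the expected dimension $3-n<0$, producing an honest isotopy for all $n\ge4$ and then quoting isotopy extension. The paper instead puts the $2$-dimensional track $I\times Y\to X$ itself in general position in $X$; that is an embedding only for $n\ge5$, forces an extra finger-move argument to disjointify the resulting immersed disks when $n=4$, and then the ambient isotopy is assembled by hand from compactly supported extensions over disjoint regular neighborhoods of the $0$-cells and disks, rather than by appealing to a proper, noncompactly supported isotopy extension theorem. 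Your properness bookkeeping over the components of a multiray and the final appeal to uniqueness of tubular neighborhoods over contractible components match the paper's.

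The genuine gap is the case \textsc{cat}=\textsc{top}. You declare the approximation step vacuous and then ``put $\tilde\Gamma$ in general position rel endpoints,'' but general position is precisely what is unavailable for a merely continuous map into a topological manifold: there is no transversality or multijet theory to invoke, and the paper explicitly flags this as the delicate point, noting that no off-the-shelf theorem seems to cover it. The paper's workaround is substantial: it first makes $\gamma_0(Y)$ and $\gamma_1(Y)$ disjoint and homotopes $\Gamma$ to an embedding using Dancis's topological general position theorems in the $2/3$rds range (resting on Homma's method), and then transports the smooth structure induced by the maps $\nu_i$ over a neighborhood of $\im\Gamma$ --- using that $(I,\partial I)\times Y$ has no cohomology above dimension $1$, and, when $n=4$, Freedman--Quinn smoothing of $X$ away from one point per compact component --- so as to reduce to the already-solved \textsc{diff} case. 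Without some such input (topological general position plus smoothing theory, or a locally flat topological isotopy that you would still have to produce before Edwards--Kirby isotopy extension could apply), your stage 1 does not go through in \textsc{top}, which is the category where the lemma is actually needed for the 4-dimensional applications.
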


\noindent This lemma is well-known when \textsc{cat}=\textsc{diff} or \textsc{pl}, but a careful proof seems justified by the subtlety of noncompactness: The corresponding statement in $\R^3$ is false even with $\Gamma$ a proper (nonambient) isotopy of $Y=\R$. (Such an isotopy $\Gamma$ can slide a knot out to infinity, changing the fundamental group of the complement, and this can even be done while fixing the integer points of $\R$.) The case \textsc{cat}=\textsc{top} is also known to specialists. We did not find a theorem in the literature from which it follows immediately. Instead, we derive it from much stronger results of Dancis \cite{D76} with antecedents dating back to pioneering work of Homma \cite{H62}.

\begin{proof}  First we solve the case \textsc{cat}=\textsc{diff}. By transversality, we may assume (after an ambient isotopy that we absorb into $\Phi$) that $\gamma_0$ and $\gamma_1$ have disjoint images. Then we properly homotope $\Gamma$ rel $\partial I\times Y$ to be smooth and generic, so it is an embedding if $n\ge 5$ and an immersion with isolated double points if $n=4$. After decomposing $Y$ as a cell complex with 0-skeleton $Y_0$, we can assume $\Gamma$ restricts to a smooth embedding on some neighborhood of $I\times Y_0$. Then there is a tubular neighborhood $J$ of $Y_0$ in $Y$ such that $\Gamma|(I\times J)$ extends to an ambient isotopy. (Apply the Isotopy Extension Theorem separately in disjoint compact neighborhoods of the components of $\Gamma(I\times Y_0)$.) After using this ambient isotopy to define $\Phi$ for parameter $t\le\frac12$, it suffices to assume $\Gamma$ fixes $J$, and view $\Gamma$ as a countable collection of path homotopies of the 1-cells of $Y$.  We need the resulting immersed 2-disks to be disjoint. This is automatic when $n\ge 5$, but is the step that fails for knotted lines in $\R^3$. For $n=4$, we push the disks off of each other by finger moves. This operation preserves properness of $\Gamma$ since each compact subset of $X$ initially intersects only finitely many disks, which have only finitely many intersections with other disks (and we do not allow finger moves over other fingers). Now we can extend to an ambient isotopy, working in disjoint compact neighborhoods of the disks. We arrange $\nu_0$ to correspond with $\nu_1$ by uniqueness of tubular neighborhoods and contractibility of the components of $Y$.

We reduce the \textsc{pl} and \textsc{top} cases to \textsc{diff}. As before, we can assume the images of $\gamma_0$ and $\gamma_1$ are disjoint. (We did not find a clean \textsc{top} statement of this. However, we can easily arrange $\gamma_0(Y_0)$ to be disjoint from $\gamma_1(Y)$, then apply \cite[General Position Lemma~3]{D76}. While this lemma assumes the moved manifold is compact and without boundary, we can apply it to the remaining 1-cells of $\gamma_0(Y)$ by arbitrarily extending them to circles.) A tubular neighborhood $N$ of $\gamma_0(Y)\sqcup\gamma_1(Y)$ now inherits a smoothing $\Sigma$ from the maps $\nu_i$. If $n=4$, $\Sigma$ extends over the entire manifold $X$ except for one point in each compact component \cite{FQ}. Homotoping $\Gamma$ off of these points, we reduce to the case \textsc{cat}=\textsc{diff}. If $n\ge 5$, we again homotope $\Gamma$ rel $\partial I\times Y$ to an embedding. (Again we found no clean \textsc{top} statement, but it follows by smoothing $\Gamma$ on $\Gamma^{-1}(N)$, homotoping so that $\Gamma^{-1}(N)$ is a collar of $\partial I\times Y$, and applying \cite[Corollary~6.1]{D76} in $X-N$.) Since $(I,\partial I)\times Y$ has no cohomology above dimension 1, there is no obstruction to extending $\Sigma$ over a neighborhood of the image of $\Gamma$, again reducing to  \textsc{cat}=\textsc{diff}.
\end{proof}

\begin{proof}[Proof of Theorem~\ref{main}] For each $i=0,1$, the two multirays $\gamma^-_i$ and $\gamma^+_i$ can be thought of as a single multiray $\gamma_i$ with index set $S^*=S\times\{-1,1\}$. For each index $(s,\sigma)\in\epsilon_{\gamma_0}^{-1}(\Eo)\subset S^*$, we arrange for the corresponding locally orienting rays in $\gamma_0$ and $\gamma_1$ to induce the same orientation of the end: If this is not already true, then Hypothesis (b) of the theorem implies that the opposite end $\epsilon_{\gamma_0}(s,-\sigma)$ is Mittag-Leffler but nonorientable. In this case, reverse the local orientations along both rays in $\gamma_1$ parametrized by $s$. This corrects the orientations without changing $Z_1$, since the change extends as a reflection of the 1-handle $\{s\}\times[0,1]\times\R^{n-1}$. Now split $\gamma_i$ into two multirays $\gamma_i^{\rm ML}$ and $\gamma_i^{\rm bad}$, according to whether the rays determine Mittag-Leffler ends. By Hypothesis (a), we have a proper homotopy from $\gamma_0^{\rm bad}$ to $\gamma_1^{\rm bad}$, which respects the local orientations by Hypothesis (b) after further possible flips as above when the opposite end is Mittag-Leffler but nonorientable. Lemma~\ref{ML}(b) then gives a proper homotopy from $\gamma_0^{\rm ML}$ to $\gamma_1^{\rm ML}$ respecting local orientations. Reassembling the multirays, we obtain a proper homotopy from $\gamma_0$ to $\gamma_1$ that respects local orientations. Now we apply Lemma~\ref{ambient} with $Y=S^*\times[0,\infty)$, and $\nu_i$ the given tubular neighborhood map for $\gamma_i$ (after the above flips). We obtain a \textsc{cat} ambient isotopy $\Phi$ of $\id_X$ such that $\Phi_1\circ\nu_0$ agrees with $\nu_1$ on a neighborhood $N$ of $S^*\times[0,\infty)\times\{0\}$ in $S^*\times[0,\infty)\times\R^{n-1}$. Note that the quotient space $Z_i$ does not change if we cut back the 1-handles $S\times[0,1]\times\R^{n-1}$ to any neighborhood $N'$ of $S\times\{\frac12\}\times\R^{n-1}$ and use the restricted gluing map. Recall that the gluing map factors through an $\R^{n-1}$-bundle map $\id_S\times\varphi^\pm\times\rho^\pm$ to $S^*\times[0,\infty)\times\R^{n-1}$. We can assume that the resulting image of $N'$ lies in some disk bundle (with radii increasing along the rays) inside $S^*\times[0,\infty)\times\R^{n-1}$. A smooth ambient isotopy supported inside a larger disk bundle moves this image into $N$. Conjugating with $\nu_i$ gives a \textsc{cat} ambient isotopy $\Psi_{(i)}$ on $X$. Then $\Phi'=\Psi_{(1)}^{-1}\circ\Phi\circ\Psi_{(0)}$ is a \textsc{cat} ambient isotopy for which $\Phi'_1\circ\nu_0$ agrees with $\nu_1$ on $N'$. The \textsc{cat} homeomorphism $\Phi'_1$ extends to one sending $Z_0$ to $Z_1$ with the required properties.
\end{proof}

We can now address uniqueness of ladder surgeries. Note that their definition immediately extends to unoriented manifolds, provided that we use locally orienting multirays.

\begin{cor}\label{ladder} For a \textsc{cat} manifold $X$, discrete $S$ and $i=0,1$, let $\gamma^\pm_i\co S\times [0,\infty)\emb X$ be locally orienting \textsc{cat} multirays with disjoint images (for each fixed $i$) such that the end functions $\epsilon_{\gamma^\pm_i}\co S\to \E(X)$ are independent of $i$. Suppose that for each $s\in \epsilon^{-1}_{\gamma^-_0}(\Eo)\cap \epsilon^{-1}_{\gamma^+_0}(\Eo)$, the local orientations of the corresponding rays in $\gamma^\pm_i$  induce the same orientation of the end for $i=0,1$. Then the manifolds $Z_i$ obtained by ladder surgery on $X$ along $\gamma^\pm_i$ are \textsc{cat} homeomorphic.
\end{cor}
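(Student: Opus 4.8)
The plan is to prove the corollary directly, without appealing to Theorem~\ref{main}, since ladder surgery carries strictly weaker hypotheses: it needs neither a dimension bound nor a fixed proper homotopy class at the ends of $\Ebad$. The source of this extra robustness is that the multirays $\gamma^\pm_i$ enter the construction only through the discrete \emph{surgery loci} $\{\gamma^\pm_i(s,n)\mid s\in S,\ n\in\Z^+\}$ together with their local orientations; the surgered manifold $Z_i$ depends, up to \textsc{cat} homeomorphism, only on these $0$-dimensional data (the auxiliary choices of disjoint balls and orientation-extending reflections being inessential by the disk theorem). It therefore suffices to produce a \textsc{cat} ambient isotopy $\Phi$ of $X$, with $\Phi_0=\id_X$ and $\Phi_1\bigl(\gamma^\pm_0(s,n)\bigr)=\gamma^\pm_1(s,n)$ for all $s,n$, matching the local orientations; such a $\Phi_1$ carries the surgery data of $Z_0$ onto that of $Z_1$ (we may take the balls for $Z_1$ to be the $\Phi_1$-images of those for $Z_0$) and so descends to the required homeomorphism.

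First I would relocate the surgery points one index at a time and compose. For a fixed index $(s,\sigma,n)$ with $\sigma\in\{-,+\}$, both $\gamma^\sigma_0(s,n)$ and $\gamma^\sigma_1(s,n)$ lie on rays determining the common end $\epsilon_{\gamma^\sigma_0}(s)=\epsilon_{\gamma^\sigma_1}(s)$, so they can be joined by a \textsc{cat} arc $\tau$ that runs out along $\gamma^\sigma_0(s,[n,\infty))$, crosses over deep in a connected neighborhood of that end, and returns along $\gamma^\sigma_1(s,[n,\infty))$. After a generic perturbation (an arc meets the $0$-dimensional remaining surgery loci in negative expected dimension, hence misses them, since $n\ge 2$), $\tau$ avoids every other surgery point and every previously placed target, so a \textsc{cat} ambient isotopy supported near $\tau$ moves $\gamma^\sigma_0(s,n)$ to $\gamma^\sigma_1(s,n)$ without disturbing the rest. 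Because only points are ever moved, the knotting phenomenon that forces $n\ge4$ in Theorem~\ref{main} never arises, which is why the corollary needs no dimension hypothesis.

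The step I expect to be the main obstacle is verifying that this infinite composition is \emph{proper}, and this is precisely where the hypothesis $\epsilon_{\gamma^\pm_0}=\epsilon_{\gamma^\pm_1}$ does its work. Properness of the multirays makes $\{(s,t):\gamma^\sigma_i(s,t)\in K\}$ compact for every compact $K\subset X$, so only finitely many indices $(s,\sigma,n)$ have a tail $\gamma^\sigma_i(s,[n,\infty))$ meeting $K$; for all remaining indices the arc $\tau$ above can be routed entirely in $X\setminus K$. Hence only finitely many of the supporting isotopies meet any given compact set, so each point of $X$ is moved by only finitely many of them and the infinite composite $\Phi=\lim_j \Psi^{(j)}\circ\cdots\circ\Psi^{(1)}$ is a well-defined proper \textsc{cat} ambient isotopy. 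This escaping-supports argument is the $0$-dimensional analogue of the combination of compactly supported homotopies in the proof of Lemma~\ref{ML}(a).

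Finally I would match the local orientations, following the bookkeeping in the proof of Theorem~\ref{main}. Being \textsc{cat} ambient isotopic to $\id_X$, the map $\Phi_1$ preserves local orientations, so it carries the local orientation of $\gamma_0$ at each surgery point to a local orientation at the corresponding point of $\gamma_1$. For an index $s\in \epsilon^{-1}_{\gamma^-_0}(\Eo)\cap \epsilon^{-1}_{\gamma^+_0}(\Eo)$ the hypothesis guarantees that $\gamma^\pm_0$ and $\gamma^\pm_1$ induce the same end orientations, which determine the local orientations, so the two orientation-extending gluings agree under $\Phi_1$. For an index $s$ at which one of the two ends is nonorientable, there is an orientation-reversing loop along which the corresponding local orientation of $\gamma_1$ may be flipped without changing $Z_1$ (the flip extends as a reflection of that surgery), so the gluings can again be made to agree. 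With point-pairs, balls, and orientation-extending reflections thus carried onto one another, $\Phi_1$ descends to the desired \textsc{cat} homeomorphism $Z_0\to Z_1$.
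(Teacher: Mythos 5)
Your argument is correct and is essentially the paper's: both reduce ladder surgery to its $0$-dimensional data (surgery points plus local orientations) and produce a \textsc{cat} ambient isotopy matching the points of $\gamma^\pm_0$ with those of $\gamma^\pm_1$ by pushing them out toward the common ends via compactly supported moves whose supports escape to infinity, which is exactly why no dimension restriction or proper-homotopy hypothesis on $\Ebad$ is needed. The paper merely packages your point-moving step as the initial homotopy from the proof of Lemma~\ref{ML} followed by the $0$-skeleton case of Lemma~\ref{ambient}, and handles the orientation flips at nonorientable ends the same way you do.
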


\begin{proof} As in the previous proof, we assume that each ray of $\gamma^\pm_0$ determining an orientable end induces the same orientation of that end as the corresponding ray of $\gamma^\pm_1$, after reversing orientations on some mated pairs of rays (with the mate determining a nonorientable end). Since the end functions are independent of $i$, there is a proper homotopy of $\gamma^\pm_0$ for each choice of sign, after which  $\gamma^\pm_i(s,n)$ is independent of $i$ for each $s\in S$ and $n\in\Z^+$ (as in the proof of Lemma~\ref{ML}). We can assume the local orientations agree at each of these points, after possibly changing the homotopy on each ray determining a nonorientable end. The proper homotopy of $\gamma^\pm_0|S\times\Z^+$ extends to an ambient isotopy as in the proof of Lemma~\ref{ambient}, without dimensional restriction (since we only deal with the 0-skeleton $Y_0$).
\end{proof}

\section{Smoothings of open 4-manifolds}\label{Smooth}

Recall from Section~\ref{handles} that end summing with an exotic $\R^4$ can be defined as an operation on the smooth structures of a fixed topological 4-manifold, and that one can similarly change smoothings of $n$-manifolds by summing with an exotic $\R\times S^{n-1}$ along a properly embedded line. (The latter is most interesting when $n=4$, but the comparison with higher dimensions is illuminating.) We now address uniqueness of both operations, expressing them as monoid actions on the set of isotopy classes of smoothings of a topological manifold. We define an {\em action} of a monoid $\M$ on a set $\Sm$ by analogy with group actions: Each element of $\M$  is assigned  a function $\Sm\to\Sm$, with the identity of $\M$  assigned $\id_\Sm$, and with monoid addition corresponding to composition of functions in the usual way.

We first consider end summing with an exotic $\R^4$. In \cite{infR4}, it was shown that the set $\r$ of oriented diffeomorphism types of smooth manifolds homeomorphic to $\R^4$ admits the structure of a commutative monoid under end sum, with identity given by the standard $\R^4$, and such that countable sums are well-defined and independent of order and grouping. (Infinite sums were defined as simultaneously end summing onto the standard $\R^4$ along a multiray in the latter. Thus, the statement follows from Theorem~\ref{main} with the two multirays $\gamma^+_i$ in $\R^4$ differing by a permutation of $S$, and with Corollary~\ref{cancel} addressing grouping; cf.\ also Section~\ref{slides}.) For any set $S$, the Cartesian product $\r^S$ inherits a monoid structure with the same properties, as does the submonoid $\r^S_c$ of $S$-tuples that are the identity except in countably many coordinates. Note that every action by such a monoid inherits a notion of infinite iteration, since we can sum infinitely many monoid elements together before applying them. In the case at hand, we obtain the following corollary of the lemmas of the previous section. We again split a multiray $\gamma\co S\times[0,\infty)\to X$ into two multirays $\gML\co \SML\times[0,\infty)\to X$ and $\gbad\co \Sbad\times[0,\infty)\to X$, according to which rays determine Mittag-Leffler ends.

\begin{cor}\label{R4} Let $X$ be a \textsc{top} 4-manifold with a locally orienting \textsc{top} multiray $\gamma\co S\times[0,\infty)\to X$. Then $\gamma$ determines an action of $\r^S$ on the set $\Sm(X)$ of isotopy classes of smoothings of $X$. The action only depends on the proper homotopy class of the locally orienting multiray $\gbad$, the function $\epsilon_{\gML}$, and the subset of $\SML$ inducing a preassigned orientation on the orientable ends. In particular, if $X$ is oriented (or orientations are specified on all orientable Mittag-Leffler ends) then the monoid $\r^{\EML(X)}_c$ acts canonically on $\Sm(X)$.
\end{cor}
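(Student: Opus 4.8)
The plan is to build the action of $\r^S$ on $\Sm(X)$ by using the multiray $\gamma$ to end sum each coordinate's exotic $\R^4$ onto $X$, and then invoke Theorem~\ref{main} to check that this assignment is well-defined and respects the monoid structure. First I would fix a representative smoothing for each class in $\Sm(X)$ and, for an element $(R_s)_{s\in S}\in\r^S$ with $R_s$ nonstandard only on a countable subset, attach a $1$-handle at infinity along each ray $\gamma|\{s\}\times[0,\infty)$ joining it to a ray in $R_s$ (in the sense of Section~\ref{handles}, transporting the smooth structure of $R_s$ across a tubular neighborhood as in the variation from \cite{MinGen}). Since only countably many $R_s$ are nonstandard and the standard $\R^4$ cancels by Corollary~\ref{cancel}, this yields a smoothing of the same underlying \textsc{top} manifold $X$; I would record its isotopy class as the image. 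The key point making this \emph{land in} $\Sm(X)$ rather than merely producing a new manifold is precisely Corollary~\ref{cancel}, which guarantees end summing with $\R^4$ does not change the homeomorphism type.

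Next I would verify the three axioms of a monoid action. Assigning the identity of $\r^S$ means summing with standard $\R^4$ in every coordinate, which by Corollary~\ref{cancel} returns the original smoothing, so the identity acts as $\id_{\Sm(X)}$. For additivity, summing with $(R_s)$ and then with $(R'_s)$ attaches two families of $1$-handles at infinity; grouping them as a single family attached along parallel copies of the rays, and using associativity and commutativity of the monoid $\r$ (from \cite{infR4}, with grouping handled by Corollary~\ref{cancel}), shows this equals summing once with $(R_s\# R'_s)$. The well-definedness on isotopy classes, and independence of the choices of tubular neighborhood maps and parametrizations, is exactly the content of Theorem~\ref{main}: two sets of defining data that agree on end functions, on the proper homotopy class over $\Ebad$, and on the relevant orientations produce \textsc{cat}-homeomorphic results via an ambient isotopy of $X$ fixing the underlying topological manifold, hence isotopic smoothings.

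The main obstacle, and where I would spend the most care, is tracking exactly which features of $\gamma$ the action depends on, as the corollary's second sentence asserts. For the rays determining Mittag-Leffler ends, Lemma~\ref{ML}(b) says the ray is determined up to (orientation-respecting) proper homotopy by its end and the induced orientation on that end; so the action through $\gML$ depends only on $\epsilon_{\gML}$ and the chosen orientations on orientable ends, with no dependence on the rays themselves. For the rays over $\Sbad$, Lemma~\ref{ML}(b) is unavailable, so the proper homotopy class of $\gbad$ must be retained as genuine data, and I would appeal directly to Hypothesis~(a) of Theorem~\ref{main}. Assembling these via the splitting $S=\SML\sqcup\Sbad$ and feeding the combined proper homotopy into Theorem~\ref{main} gives the stated dependence. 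Finally, in the oriented case (or once orientations on all orientable Mittag-Leffler ends are fixed), the orientation data and the homotopy class over an empty $\Sbad$ become vacuous, so the action factors through $\epsilon_{\gML}$ alone; since distinct ends give independent, commuting $1$-handle attachments and the standard $\R^4$ cancels, the resulting action is that of $\r^{\EML(X)}_c$ and is canonical, independent of all choices of defining multiray.
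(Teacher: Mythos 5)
Your overall strategy matches the paper's: define the action by summing with each $R_s$ along the corresponding ray of $\gamma$, use Lemma~\ref{ML}(b) to reduce the dependence on $\gML$ to $\epsilon_{\gML}$ plus orientation data, retain the proper homotopy class of $\gbad$ as genuine data, and check compositionality. But there are two concrete gaps. First, and most importantly, your mechanism for landing in $\Sm(X)$ is off. You invoke Corollary~\ref{cancel} to say the end sum is homeomorphic to $X$, but a statement about homeomorphism \emph{type} does not by itself produce a well-defined isotopy class of smoothings on the \emph{fixed} topological manifold $X$: you would still need a preferred homeomorphism back to $X$ and an argument that different choices give isotopic pullback smoothings. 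The paper sidesteps this entirely (and never uses Corollary~\ref{cancel} in this proof): the sum is performed by homeomorphically identifying a closed tubular neighborhood of the ray in $X$ with the complement of a ray neighborhood in $R_s$ and transporting the smoothing of $R_s$ into that neighborhood, so the underlying \textsc{top} manifold literally never changes. You gesture at this (the \cite{MinGen} transport), but then undercut it by routing the well-definedness through Corollary~\ref{cancel}; you should commit to the fixed-manifold description and instead verify that the identification of half-space neighborhoods is unique up to ambient isotopy smooth on the boundary (the Alexander trick) and that the ray in $R_s$ is unique up to smooth ambient isotopy (Lemma~\ref{ambient}).

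Second, $\gamma$ is only a \textsc{top} multiray while the construction must be carried out smoothly with respect to whichever smoothing of $X$ you are acting on. The paper first makes $\gamma$ smooth by a \textsc{top} ambient isotopy (Quinn), and then uses Lemma~\ref{ambient} to see that the choice of smoothing of $\gamma$ does not affect the resulting isotopy class; your proposal skips this step, without which the construction cannot begin. A smaller point: for compositionality the paper observes that after the first sum the multiray lies inside the new summands $R_s$, so the second application is literally an end sum of $R_s$ with $R'_s$; your ``parallel copies of the rays'' argument can be made to work but needs a handle-slide justification that two disjoint parallel rays in a tubular neighborhood give the same result as a single sum with $R_s\# R'_s$. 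Finally, for the last sentence of the corollary you should note the paper's device of enlarging the index set $S$ with standard $\R^4$ summands, which is what lets $\r^{\EML(X)}_c$ act even when $\EML(X)$ is uncountable (a multiray has only countably many rays).
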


\noindent Note that orientation reversal induces an involution on  the monoid $\r$, and changing the local orientations of $\gamma$ changes the action by composing with this involution on the affected factors of $\r^S$.

\begin{proof} To define the action, fix a smoothing on $X$ and an indexed set $\{R_s|\thinspace s\in S\}$ of elements of $\r$. According to Quinn (\cite{Q}, cf.\ also \cite{FQ}), $\gamma$ can be made smooth by a  \textsc{top} ambient isotopy. For each $s\in S$, choose a smooth ray $\gamma'$ in $R_s$, and use it to sum $R_s$ with $X$ along the corresponding ray in $X$. We do this by homeomorphically identifying the complement of a tubular neighborhood of $\gamma'$ (with smooth $\R^3$ boundary) with a corresponding closed tubular neighborhood of the ray in $X$ (preserving orientations), then transporting the smoothing of $R_s$ to $X$. We assume the identification is smooth near each boundary $\R^3$, and then the smoothing fits together with the given one on the rest of $X$. This process can be performed simultaneously for all $s\in S$, provided that we work within a closed tubular neighborhood of $\gamma$. Each ray $\gamma'$ is unique up to smooth ambient isotopy (Lemma~\ref{ambient}), and the required identifications of neighborhoods (homeomorphic to the half-space $[0,\infty)\times\R^3$) are unique up to topological ambient isotopy that is smooth on the boundary (by the Alexander trick), so the resulting isotopy class of smoothings on $X$ is independent of choices made in the $R_s$ summands. Similarly, the resulting smoothing is changed by an isotopy if the original smoothing of $X$ is isotoped or $\gamma$ is changed by a proper homotopy (Lemma~\ref{ambient} again). In particular, the initial choice of smoothing of $\gamma$ does not matter. Since the proper homotopy class of the locally orienting multiray $\gML$ is determined by $\epsilon_{\gML}$ and the orientation data (Lemma~\ref{ML}(b)), we have a well-defined function $\Sm(X)\to\Sm(X)$ determined by an element of $\r^S$ and the data given in the corollary.

The rest of the corollary is easily checked. To verify that we have a monoid action, consecutively apply two elements $\{R_s\}$ and $\{R'_s\}$ of $\r^S$. This uses the multiray $\Gamma$ twice. After summing with  each $R_s$, however, $\Gamma$ lies in the new summands, so we are equivalently end summing $X$ with the sum of the two elements of $\r^S$ as required. If we enlarge the index set $S$ of $\{R_s\}$ while requiring all of the new summands $R_s$ to be $\R^4$, the induced element of $\Sm(X)$ will be unchanged, so it is easy to deduce the last sentence of the corollary even when $\EML$ is uncountable.
\end{proof}

In contrast with more general end sums, the action of $\r^S$ on $\Sm(X)$ is not known to vary with the choice of proper homotopy class of $\gamma$ (for a fixed end function).

\begin{ques} Suppose that two locally orienting multirays in $X$ have the same end function, and that for each $s\in S$, the two corresponding rays induce the same orientation on the corresponding end, if it admits one. Can the two actions of $\r^S$ on $\Sm(X)$ be different?
\end{ques}

\noindent We can also ask about diffeomorphism types rather than isotopy classes; cf.\ Question~\ref{R4sum}. Clearly, any example of nonuniqueness must involve an end that fails to be Mittag-Leffler, such as one arising by ladder surgery. While such examples seem likely to exist, there are also reasons for caution, as we now discuss.

First, not every exotic $\R^4$ can give such examples. Freedman and Taylor \cite{FT} constructed a ``universal" $\R^4$, $R_U\in\r$, which is characterized as being the unique fixed point of the $\r$-action on itself. They essentially showed that for any smoothing $\Sigma$ of a 4-manifold $X$, the result of end summing with copies of $R_U$ depends only on the subset of $\E(X)$ at which the sums are performed, regardless of whether those ends are Mittag-Leffler. Then $\r$ subsequently acts trivially on each of those ends. They also showed that the result of summing with $R_U$ on a dense subset of ends creates a smoothing depending only on the stable isotopy class of $\Sigma$ (classified by $H^3(X,\partial X;\Z/2)$). For such a smoothing, $\r^S$ acts trivially for any choice of multiray. The main point is that the universal property is obtained through a countable collection of disjoint compact subsets of $R_U$ that allow h-cobordisms to be smoothly trivialized. If $X$ is summed with $R_U$ on one side of a ladder sum (for example), those compact subsets are also accessible on the other side by reaching through the rungs of the ladder.

A second issue is that examples of nonuniqueness would be subtle and hard to distinguish:

\begin{prop}\label{subtle_prop} Let $X$ be a \textsc{top} 4-manifold with smoothing $\Sigma$. Let $\gamma_0,\gamma_1\co S\times[0,\infty)\to X$ be multirays as in the above question, inducing smoothings $\Sigma_0$ and $\Sigma_1$, respectively, via a fixed element of $\r^S$. Then for every compact \textsc{diff} 4-manifold $K$, every $\Sigma_0$-smooth embedding $\iota\co K\to X$ is \textsc{top} ambiently isotopic to a $\Sigma_1$-smooth embedding. After isotopy of $\Sigma_1$, every neighborhood of infinity in $X$ contains another such neighborhood $U$ such that whenever $\iota(K)\subset U$ and $K$ is a 2-handlebody, the resulting isotopy can be assumed to keep $\iota(K)$ inside $U$.
\end{prop}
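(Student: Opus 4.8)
The plan is to exploit that although $\Sigma_0$ and $\Sigma_1$ may be nonisotopic, they are \emph{stably} isotopic, and then to upgrade this stable (concordance-level) agreement to a genuine topological isotopy on a compact neighborhood of $\iota(K)$. First I would record the stable comparison. Each $\Sigma_i$ is obtained from $\Sigma$ by end summing with the \emph{same} tuple $\{R_s\}\in\r^S$, and each $R_s$ is homeomorphic to $\R^4$, so $R_s\times\R\cong\R^5$. Consequently, crossing with $\R$ makes every one of these summing operations trivial up to isotopy (a modification supported where the model $R_s\times\R$ is standard, which one can also cancel directly by Corollary~\ref{cancel} in dimension $5$). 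Thus $\Sigma_0\times\R\cong\Sigma\times\R\cong\Sigma_1\times\R$, so $\Sigma_0$ and $\Sigma_1$ are stably isotopic; their difference in the group $H^3(X,\partial X;\Z/2)$ classifying stable isotopy classes (Section~\ref{Smooth}) therefore vanishes, and there is a smooth concordance $\Theta$ on $X\times I$ from $\Sigma_0$ to $\Sigma_1$.

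For the first assertion I would localize. Choose a compact, $\Sigma_0$-smooth neighborhood $W$ of $\iota(K)$ and restrict $\Theta$ to $W\times I$, obtaining a concordance from $\Sigma_0|_W$ to $\Sigma_1|_W$ on the compact $4$-manifold $W$. By Quinn's concordance-implies-isotopy theorem for compact $4$-manifolds (\cite{Q},~\cite{FQ}), this concordance is realized by a \textsc{top} isotopy $h_t$ of $W$ with $h_1\colon(W,\Sigma_0)\to(W,\Sigma_1)$ a diffeomorphism. Extending $h_t$ to an ambient isotopy $H_t$ of $X$ by the \textsc{top} isotopy extension theorem, the embedding $H_1\circ\iota$ is $\Sigma_1$-smooth and \textsc{top} ambiently isotopic to $\iota$; note that $\iota(K)$ remains inside $W$ throughout.

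For the refinement near infinity I would first isotope $\Sigma_1$ so that it is controlled on a nested neighborhood system $\{U_i\}$ of infinity adapted to the summing data, and take $U$ to be a suitable $U_i$. Here the difficulty is that $\Theta|_{W\times I}$ is a concordance on $W$ but not \emph{rel} $\partial W$: its relative difference class in $H^3(W,\partial W;\Z/2)$ is dual to the arcs $\gamma_0\cap W$ and $\gamma_1\cap W$ and need not vanish, so $h_t$ cannot be assumed supported in $W$, and the isotopy extension of the previous paragraph is allowed to spread toward infinity. When $K$ is a $2$-handlebody I would instead take $W\subset U$ to be a handlebody neighborhood of $\iota(K)$ built from handles of index at most $2$, and realize the concordance as an isotopy handle by handle, smoothing each $0$-, $1$-, and $2$-handle inside $U$ by Freedman--Quinn topological handle smoothing and using the exotic $\R^4$ tails present in $U$ to supply the necessary room. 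Performing each step inside $U$ then keeps $\iota(K)$ inside $U$.

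I expect the main obstacle to be exactly this localization: arranging that the passage from the concordance to an honest isotopy can be carried out within the prescribed neighborhood of infinity, rather than being allowed to range over a large compact region and out toward infinity as in the first assertion. The hypothesis that $K$ is a $2$-handlebody is essential, since Freedman--Quinn handle smoothing is available precisely for handles of index at most $2$; index-$3$ and index-$4$ handles would require control near a possibly non-Mittag-Leffler end that is unavailable. The delicate technical point should be showing that the relative smoothing obstruction of the index-$2$ handles can be absorbed into the exotic tails, so that the resulting isotopy genuinely stays inside $U$.
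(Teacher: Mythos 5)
Your argument has a fatal gap at its central step: the appeal to ``Quinn's concordance-implies-isotopy theorem for compact $4$-manifolds.'' No such theorem is available. Concordance implies isotopy for smoothings holds in dimensions at least $5$ (Kirby--Siebenmann), but it fails in dimension $4$: for example, any two smooth structures on the topological $K3$ manifold are concordant (a smooth h-cobordism between them exists by Wall, and it is a topological product rel ends by Freedman's $5$-dimensional \textsc{top} h-cobordism theorem), yet they need not be diffeomorphic. Your chain ``stably isotopic $\Rightarrow$ concordant $\Rightarrow$ isotopic on compact pieces'' would, applied verbatim, prove that all homeomorphic simply connected closed smooth $4$-manifolds are diffeomorphic. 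It would also make the proposition pointless: the entire interest of the statement is that $\Sigma_0$ and $\Sigma_1$ may well fail to be isotopic (Question~\ref{R4sum} and the question immediately preceding the proposition), so any soft smoothing-theoretic argument that produces a diffeomorphism $(W,\Sigma_0)\to(W,\Sigma_1)$ on arbitrarily large compact $W$ must be wrong. (Your first paragraph's conclusion that $\Sigma_0$ and $\Sigma_1$ are stably isotopic is correct, but in dimension $4$ neither stable isotopy nor concordance controls the isotopy class.)

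The paper's proof avoids comparing the smoothings abstractly and is much more elementary. Since $\Sigma_i$ differs from $\Sigma$ only inside the closed tube $\overline{\nu}_i(S\times[0,\infty)\times D^3)$, and by properness $\iota(K)$ meets each tube only in the image of a compact piece $T=S_0\times[0,N]\times D^3$, one constructs a $\Sigma$-smooth ambient isotopy $\Phi_t$ of $X$ with $\Phi_1\circ\overline{\nu}_0=\overline{\nu}_1$ on $T$ and with $\overline{\nu}_1^{-1}\Phi_1\iota(K)$ still contained in $T$; this uses only that corresponding rays determine the same end and compatible local orientations. Then $\Phi_1\circ\iota$ is $\Sigma_1$-smooth for free, because the two smoothings literally correspond under $\Phi_1$ on the images of $T$ and both equal $\Sigma$ on the rest of $\iota(K)$. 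Likewise, the $2$-handlebody hypothesis in the second statement is not about Freedman--Quinn handle smoothing, as you guessed; it enters only through general position, allowing the spine of $\iota(K)$ to be pushed off the tracks of the moving rays so that no sheet of $\iota(K)$ is dragged across $\partial U$ during the isotopy (the cautionary example being $K=M^3\times I$).
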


\noindent This shows that many of the standard 4-dimensional techniques for distinguishing smooth structures will fail in the above situation. One of the oldest techniques for distinguishing two smoothings on $\R^4$ is to find a compact \textsc{diff} manifold that smoothly embeds in one but not the other \cite{infR4}. A newer incarnation of this idea is the Taylor invariant \cite{T}, distinguishing \textsc{diff} 4-manifolds via an exotic $\R^4$ embedded in one with compact closure. Clearly, such techniques must fail in the current situation. Most recently, the genus function has turned out to be useful \cite{MinGen}, distinguishing by the minimal genera of smoothly embedded surfaces representing various homology classes. However, any such surface for $\Sigma_0$ will be homologous to one of the same genus for $\Sigma_1$ and vice versa. Minimal genera at infinity \cite{MinGen} will also fail: If we choose a system of neighborhoods $U$ of infinity as in the proposition, any corresponding sequence of $\Sigma_0$-smooth surfaces in these will be homologous to a corresponding sequence for $\Sigma_1$ with the same genera. A possibility remains of distinguishing $\Sigma_0$ and $\Sigma_1$ by sequences of smoothly embedded 3-manifolds approaching infinity (such as by the engulfing index of \cite{BG}, cf.\ also Remark~4.3(b) of \cite{MinGen}), but there does not currently seem to be any good way to analyze such sequences. Note that the situation is not improved by passing to a cover, since the corresponding lifted smoothings will behave similarly. (The multirays $\gamma_i$ will lift to multirays, and for each $s\in S$ the lifts of the corresponding rays of $\gamma_0$ and $\gamma_1$ will will be multirays with end functions whose images have the same closure in $\E(\widetilde X)$, cf.\ last paragraph of proof of Theorem~8.1 in \cite{MinGenV2}. The proof below still applies to this situation.)

\begin{proof} For the first conclusion, let $\overline{\nu}_i\co S\times[0,\infty)\times D^3\to X$ be the closed tubular neighborhood maps of the multirays $\gamma_i$ used for the end sums. By properness, both subsets $\overline{\nu}_i^{-1}\iota(K)$ are contained in a single subset of the form $T=S_0\times[0,N]\times D^3$ for some finite $S_0\subset S$ and $N\in\Z^+$. We need a $\Sigma$-smooth ambient isotopy $\Phi_t$ of $\id_X$ such that $\Phi_1\circ\overline{\nu}_0=\overline{\nu}_1$ on $T$, allowing no new intersections with $\iota(K)$, i.e., with $\overline{\nu}_1^{-1}\Phi_1\iota(K)$ still lying in $T$. This is easily arranged, since for each $s\in S_0$ the corresponding rays of $\gamma_0$ and $\gamma_1$ determine the same end and induce the same orientation on it if possible. This allows us to move $\gamma_0(s,N)$ to $\gamma_1(s,N)$ so that the local orientations agree, and then complete the isotopy following the initial segments of the rays. (The end hypothesis is needed when $X-\iota(K)$ is disconnected, for example.) After we perform the end sums, our isotopy will only be topological. However, $\Phi_1\circ\iota$ will be $\Sigma_1$-smooth as required, since the new smoothings correspond under $\Phi_1$ on the images of $T$ and the smoothing $\Sigma$ is preserved elsewhere on $\iota(K)$.

For the second statement, assume (isotoping $\Sigma_1$) that the images of $\overline{\nu}_i$, for $i=0,1$, are disjoint. Given a neighborhood of infinity, pass to a smaller neighborhood $U$ such that the two subsets $\overline{\nu}_i^{-1}(U)$ are equal, with complement of the form $S_1\times[0,N']\times D^3$ for some finite $S_1$ and $N'\in\Z^+$. For any $K$ and $\iota$ with $\iota(K)\subset U$, we can repeat the previous argument. There is only one difficulty: If $K=M^3\times I$, for example, some sheets of $M$ may be caught between $\partial U$ and the moving image of $\gamma_0$ during the final isotopy, and be pushed out of $U$. However, if $K$ is a handlebody with all indices 2 or less, we can remove the image of $K$ from the path of $\gamma_0$ (which will be following arcs of $\gamma_1$) by transversality. The statement now follows as before.
\end{proof}

Elements of $\r$ can be either {\em large} or {\em small}, depending on whether they contain a compact submanifold that cannot smoothly embed in the standard $\R^4$ (e.g., \cite[Section~9.4]{GS}). Action on $\Sm(X)$ by small elements does not change the invariants discussed above (except for 3-manifolds at infinity),  but still can yield uncountably many diffeomorphism types \cite[Theorem~7.1]{MinGen}. However, large elements typically do change invariants. In particular, the minimal genus of a homology class can drop under end sum with, for example, the universal $\R^4$ \cite[Theorem~8.1]{MinGen}. For Stein surfaces, the adjunction inequality gives a lower bound on minimal genera, which is frequently violated after such sums. Thus, the following application of Corollary~\ref{stein1h} seems surprising:

\begin{thm}[Bennett]\label{JB} \cite[Corollary~4.1.3]{BenDis} There is a family $\{R_t|\thinspace t\in\R\}$ of distinct large elements of $\r$ (with nonzero Taylor invariant) such that if $Z$ is obtained from a Stein surface $X$ by any orientation-preserving end sums with elements $R_t$ then the adjunction inequality of $X$ applies in $Z$.
\end{thm}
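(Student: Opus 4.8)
The plan is to obtain the adjunction inequality in $Z$ not from a global Stein structure on $Z$ — which one should not expect, since summing with a large exotic $\R^4$ with nonzero Taylor invariant typically introduces the index-$3$ behavior that obstructs a global plurisubharmonic exhaustion (cf.\ Proposition~\ref{Morse} and the discussion of \cite{T}) — but from an exhaustion of $Z$ by compact Stein \emph{domains} whose canonical classes all restrict from $c_1(X)$. Recall that the adjunction inequality is local to such a domain: any closed, connected, oriented surface $\Sigma$ of positive genus smoothly embedded in a Stein domain $D$ satisfies $2g(\Sigma)-2\ge [\Sigma]^2+|\langle c_1(D),[\Sigma]\rangle|$, via a Lisca--Mati\'c-type embedding of $D$ into a closed minimal K\"ahler surface together with the Seiberg--Witten adjunction relation. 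Thus it suffices to show that every compact surface in $Z$ lies in a Stein domain $D\subset Z$ whose canonical class is the restriction of $c_1(X)$.

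The heart of the matter, and the step I expect to be the main obstacle, is the construction of the family $\{R_t\}$. I would build each $R_t$ as an exotic $\R^4$ that is \emph{exhausted} by an increasing sequence of Stein domains $R_t^{(1)}\subset R_t^{(2)}\subset\cdots$, each carrying a Stein structure with trivial canonical class $c_1(R_t^{(n)})=0$ (arranged by attaching all $2$-handles along Legendrian curves of rotation number zero), but with each cobordism $R_t^{(n+1)}\setminus\inter R_t^{(n)}$ necessarily containing index-$3$ handles. The $3$-handles prevent assembling the $R_t^{(n)}$ into a single plurisubharmonic function, so $R_t$ is non-Stein with nonzero Taylor invariant, while the presence of a compact piece not embeddable in the standard $\R^4$ makes $R_t$ large; a real parameter's worth of distinct Taylor invariants then yields the family. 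Verifying \emph{simultaneously} that the total space is topologically standard $\R^4$, is large, and realizes uncountably many Taylor invariants is the delicate handle-theoretic and gauge-theoretic core; this is Bennett's contribution \cite{BenDis}.

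Granting the family, the reduction to Corollary~\ref{stein1h} is clean. Fix compact Stein domains $X_c\subset X$ with $X=\bigcup_c\inter X_c$, and arrange the summing rays to emanate from $\partial X_c$ and from $\partial R_t^{(n)}$. Since boundary summing compact manifolds has the effect of end summing their interiors (as noted in Section~\ref{handles}), the boundary sum $D_{c,n}:=X_c\,\natural\,R_t^{(n)}$ is a compact manifold whose interior is the end sum $\inter X_c\,\#\,\inter R_t^{(n)}$ of two open Stein manifolds. By Corollary~\ref{stein1h} this end sum is Stein, so $D_{c,n}$ is a Stein domain inside $Z$; moreover the almost-complex-structure clause of Corollary~\ref{stein1h} lets us take the resulting structure to restrict to the given ones on the two factors, whence $c_1(D_{c,n})$ restricts to $c_1(X)|_{\inter X_c}$ and to $c_1(R_t^{(n)})=0$. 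As $c,n\to\infty$ these domains exhaust $Z$, so any prescribed compact surface $\Sigma\subset Z$ lies in some $D_{c,n}$.

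Finally I would match the numerical data. By Corollary~\ref{cancel}, $Z$ is homeomorphic to $X$, and the inclusion $X\hookrightarrow Z$ induces $H_2(X)\cong H_2(Z)$; since $H_2(R_t)=0$, the component of $[\Sigma]$ carried by the $R_t^{(n)}$ factor dies in $H_2(Z)$. Together with $c_1(R_t^{(n)})=0$ and a Mayer--Vietoris computation across the connecting $1$-handle, this gives $\langle c_1(D_{c,n}),[\Sigma]\rangle=\langle c_1(X),[\Sigma]\rangle$, while $[\Sigma]^2$ computed in $D_{c,n}$ equals $[\Sigma]^2$ in $Z$ and in $X$ (the normal Euler number being intrinsic). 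Hence the adjunction inequality in the Stein domain $D_{c,n}$ reads exactly as the adjunction inequality of $X$ for the class $[\Sigma]$, completing the argument. Everything after the construction of $\{R_t\}$ is bookkeeping resting on Corollaries~\ref{stein1h} and~\ref{cancel}; the genuinely hard input is the existence of the family in the second paragraph.
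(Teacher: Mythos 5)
Your global strategy --- deriving adjunction in $Z$ from Stein objects produced via Corollary~\ref{stein1h} rather than from a Stein structure on $Z$ itself --- is the right instinct, and your numerical bookkeeping at the end (trivial canonical class on the exotic factor, intrinsic normal Euler number) is essentially sound. But the key input you assign to Bennett is not what Bennett proves, and the argument collapses without it. You posit that each $R_t$ is \emph{exhausted} by compact Stein domains $R_t^{(1)}\subset R_t^{(2)}\subset\cdots$ with $c_1=0$, and you then exhaust $Z$ by the boundary sums $X_c\,\natural\,R_t^{(n)}$. What \cite{BenDis} actually constructs is each $R_t$ \emph{embedded as an open subset of a Stein surface} $S_t$, with the summing arranged compatibly with that embedding. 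These are very different properties: an open subset of a Stein surface need not admit any exhaustion by compact Stein domains, and it is far from clear that a manifold homeomorphic to $\R^4$ with nonzero Taylor invariant --- every handle decomposition of which requires infinitely many index-$3$ handles --- admits one. So the ``genuinely hard input'' of your second paragraph is not available from the cited source, and you give no argument for it. (Relatedly, your inference ``$3$-handles in the cobordisms, hence non-Stein with nonzero Taylor invariant'' runs Taylor's theorem backwards: infinitely many $3$-handles are a \emph{consequence} of a nonzero invariant, not a means of producing one.)

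The intended proof goes through the ambient Stein surfaces instead: since $R_t\subset S_t$ with $S_t$ Stein, the end sum $Z=X\,\natural\,R_t$ sits inside $X\,\natural\,S_t$, which is Stein by Corollary~\ref{stein1h} with almost-complex structure restricting to the given one on $X$; any surface in $Z$ is then a surface in this open Stein surface, and adjunction follows, with the pairing against $c_1$ controlled by Bennett's construction. If you insist on compact Stein domains, exhaust $X\,\natural\,S_t$, not $R_t$. One further small correction: Corollary~\ref{stein1h} concerns $1$-handles at infinity attached to open Stein manifolds, so invoking it to make the compact boundary sum $X_c\,\natural\,R_t^{(n)}$ a Stein domain is a category error; the relevant compact statement is Eliashberg's $1$-handle attachment, which is true but is not what that corollary says.
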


\noindent Nevertheless, we expect such sums to destroy the Stein structure, since every handle decomposition of each $R_t$ requires infinitely many 3-handles. The idea of the proof is that \cite{BenDis} or \cite{Ben} constructs such manifolds $R_t$ embedded in Stein surfaces, in such a way that the sums can be performed pairwise. By Corollary~\ref{stein1h}, we obtain $Z$ embedded in a Stein surface so that the adjunction inequality is preserved.

Next we consider sums along properly embedded lines. For a fixed $n\ge 4$, let $\Q$ denote the set of oriented diffeomorphism types of manifolds homeomorphic to $\R\times S^{n-1}$, with a given ordering of their two ends. Each such manifold admits a \textsc{diff} proper embedding of a line, preserving the order of the ends, and this is unique up to \textsc{diff} ambient isotopy by Lemma~\ref{ambient}. Thus, $\Q$ has a well-defined commutative monoid structure induced by summing along lines, preserving orientations on the lines and $n$-manifolds. (This time, properness prevents infinite sums.) The identity is $\R\times S^{n-1}$ with its standard smoothing. For $n=5,6,7$, $\Q$ is trivial, and for $n>5$, $\Q$ is canonically isomorphic to the finite group $\Theta_{n-1}$ of homotopy $(n-1)$-spheres \cite{KM} (by taking their product with $\R$). However when $n=4$, $\Q$ has much more structure: High-dimensional theory predicts that $\Q$ should be $\Z/2$, but in fact it is an uncountable monoid with an epimorphism to $\Z/2$ (analogous to the Rohlin invariant of homology 3-spheres). Uncountability is already suggested by Corollary~\ref{R4}, but the structure of $\Q$ is richer than can be obtained just by acting by $\r$ at the two ends, as can be seen as follows. For $V,V'\in\Q$, call $V$ a {\em slice} of $V'$ if it embeds in $V'$ separating the ends. (For this discussion, orientations and order of the ends do not matter.) Every known ``large" exotic $\R^4$ has a neighborhood of infinity in $\Q$ with the property that disjoint slices are never diffeomorphic \cite{infR4}. This neighborhood clearly has infinitely many disjoint slices, which comprise an infinite family in $\Q$ such that no two share a common slice. Thus, no two are obtained from a common element of $\Q$ by the action of $\r\times\r$. A similar family representing the other class in $\Z/2$ is obtained from the end of a smoothing of Freedman's punctured $E_8$-manifold.

To get an action on $\Sm(X)$ for $n\ge4$, let $\gamma\co S\times\R\to X$ ($S$ discrete) be a proper, locally orienting \textsc{top} embedding. Then $\Q^S$ has a well-defined action on $\Sm(X)$ (although without infinite iteration) by the same method as before, and this only depends on the proper homotopy class of $\gamma$. (We assume after proper homotopy that $\gamma^{-1}(\partial X)=\emptyset$. To see that a self-homeomorphism rel boundary of $\R\times D^{n-1}$ is isotopic to the identity, first use the topological Schoenflies Theorem to reduce to the case where $\{0\}\times D^{n-1}$ is fixed.) Note that while $\Q$ admits only finite sums, the set $S$ may be countably infinite. Examples~\ref{PL} showed that the action of $\Q$ on $\Sm(X)$ for a two-ended 4-manifold $X$ can depend on the choice of line spanning the ends, and in high dimensions, even the resulting diffeomorphism type can depend on the line. We next find fundamental group conditions eliminating such dependence.

To obtain such conditions, note that the fundamental progroup of $X$ based at a ray $\gamma$ has an inverse limit with well-defined image in $\pi_1(X,\gamma(0))$. In the Mittag-Leffler case, its image equals the image of $\pi_1(U_2,\gamma(2))$ for a suitably defined neighborhood system of infinity (i.e.\ with $j=i+1$ in Definition~\ref{ml}). If $\gamma$ is instead a line, it splits as a pair $\gamma_\pm$ of rays, obtained by restricting its parameter $\pm t$ to $[0,\infty)$, determining ends $\epsilon_\pm$ and images $G_\pm\subset\pi_1(X,\gamma(0))$ of the corresponding inverse limits. We will call the pair $(\epsilon_-,\epsilon_+)$ a {\em Mittag-Leffler couple} if both ends are Mittag-Leffler and the double coset space $G_-\backslash\pi_1(X,\gamma(0))/G_+$ is trivial. The proof below shows that $\gamma$ is then uniquely determined up to proper homotopy by the pair of ends, so the condition is independent of choice of $\gamma$ (as well as the direction of $\gamma$). A proper embedding $\gamma\co S\times\R\to X$ now splits into $\gML$ and $\gbad$ according to which lines connect Mittag-Leffler couples, and the restriction $\epsilon_{\gML}$ of the end function $\epsilon_{\gamma}\co S\times\{\pm1\}\to \E$ picks out the corresponding pairs of Mittag-Leffler ends. For simplicity, we now assume $X$ is oriented.

\begin{cor}\label{SxR} Let $X$ be an oriented topological $n$-manifold ($n\ge4$) with a  proper embedding $\gamma\co S\times\R\to X$. Then $\gamma$ determines an action of $\Q^S$ on $\Sm(X)$, depending only on the proper homotopy classes of $\gbad$ and $\gML$. If the latter consists of finitely many lines, it only affects the action through its end function $\epsilon_{\gML}$.
\end{cor}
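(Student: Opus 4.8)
The plan is to mirror the proof of Corollary~\ref{R4} as closely as possible, since the setup is parallel: we are defining a monoid action on $\Sm(X)$ by summing with elements of $\Q$ along properly embedded lines rather than summing with elements of $\r$ along rays. First I would reduce to the smooth category using Quinn's theorem, making $\gamma$ smooth after a \textsc{top} ambient isotopy (as in Corollary~\ref{R4}), and arranging $\gamma^{-1}(\partial X)=\emptyset$ by the argument already sketched in the text (the topological Schoenflies Theorem plus the Alexander trick on $\R\times D^{n-1}$). For each $s\in S$ and each assigned element $V_s\in\Q$, I would choose a smooth order-preserving proper line in $V_s$, delete an open tubular neighborhood of it (with smooth $\R^{n-1}$ boundary), and glue the remainder to the complement of a corresponding tubular neighborhood of $\gamma(s,\cdot)$ in $X$, transporting the smoothing of $V_s$ and arranging the identification to be smooth near the boundary $\R^{n-1}$'s. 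Properness of $\gamma$ lets this be done simultaneously over all $s\in S$ inside a closed tubular neighborhood of $\gamma$. The uniqueness of the smooth line in each $V_s$ up to \textsc{diff} ambient isotopy (Lemma~\ref{ambient}), together with the uniqueness up to topological-but-boundary-smooth ambient isotopy of the gluings (Alexander trick on the half-space-like pieces), shows the resulting isotopy class of smoothing is independent of the choices inside the $V_s$ summands. That the outcome depends only on the isotopy class of the starting smoothing and on the proper homotopy class of $\gamma$ again follows from Lemma~\ref{ambient}.

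The monoid action axioms are verified exactly as in Corollary~\ref{R4}: after summing with $\{V_s\}$, the embedded line $\gamma$ comes to lie inside the new summands, so applying $\{V'_s\}$ next is the same as summing once with $\{V_s + V'_s\}\in\Q^S$, which gives compatibility with the monoid addition; the identity of $\Q^S$ acts by $\id_{\Sm(X)}$ since summing with the standard $\R\times S^{n-1}$ along a line is a cancellation that does not change the smoothing. Here one should note the difference from the $\r$ case flagged in the text: $\Q$ admits only finite sums (properness of a line sum fails for infinite collections), so there is no infinite-iteration structure, but $S$ itself may still be countably infinite since distinct lines have disjoint tubular neighborhoods; this requires no new argument.

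The substantive new content, and the step I expect to be the main obstacle, is the dependence clause: that the action depends on $\gbad$ and $\gML$ only through their proper homotopy classes, and that a \emph{finite} $\gML$ affects the action only through its end function $\epsilon_{\gML}$. The first half is immediate from the construction via Lemma~\ref{ambient}, once we know a line sum depends only on the proper homotopy class of the line. The real work is showing that a line $\gamma$ connecting a Mittag-Leffler couple $(\epsilon_-,\epsilon_+)$ is determined up to proper homotopy by the unordered pair of ends alone. I would prove this by the method of Lemma~\ref{ML}(b): split $\gamma$ into its two rays $\gamma_\pm$, which by the Mittag-Leffler hypothesis on each end are each determined up to proper homotopy by $\epsilon_\pm$; the only additional datum is how the two rays are joined across a compact core, measured by an element of the double coset space $G_-\backslash\pi_1(X,\gamma(0))/G_+$. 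Triviality of this double coset space (the defining condition of a Mittag-Leffler couple) is exactly what lets us slide the joining arc to absorb any discrepancy in $\pi_1(X,\gamma(0))$, pushing the difference of two such lines out toward both ends as in the proof of Lemma~\ref{ML}(a). This yields a proper homotopy between any two lines realizing the same pair of ends, and combined with Lemma~\ref{ambient} it shows the action is unaffected. The finiteness hypothesis on $\gML$ is used only to guarantee that the combined proper homotopy over all the lines of $\gML$ is itself proper; for a single line (or finitely many) properness is automatic, whereas an infinite family would need the $W_j$-exhaustion bookkeeping of the proof of Lemma~\ref{ML}(b), which is why the cleaner end-function-only statement is restricted to the finite case.
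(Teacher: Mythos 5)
Your proposal follows essentially the same route as the paper: the action itself is constructed exactly as in Corollary~\ref{R4}, and the substantive step is, as you identify, showing that a line realizing a Mittag-Leffler couple is unique up to proper homotopy by aligning two such lines on $\Z$, capturing the central discrepancy as a class in $\pi_1(X,\gamma(0))$, factoring it as $w_-w_+$ via triviality of $G_-\backslash\pi_1(X,\gamma(0))/G_+$, and pushing each factor to infinity with the machinery of Lemma~\ref{ML}(a), with finiteness of $\gML$ used only for properness of the combined homotopy. The one small correction: the line is determined up to proper homotopy by the \emph{ordered} pair of ends (a proper homotopy preserves the end function), not the unordered pair as you wrote, though your actual argument---splitting into $\gamma_\pm$ determining $\epsilon_\pm$---already uses the ordering.
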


\noindent If $X$ is simply connected and $\EML$ is finite, we obtain a canonical action of $\Q^{\EML\times\EML}$ on $\Sm(X)$.

\begin{proof} For a proper embedding $\gamma$ of $\R$ determining a Mittag-Leffler couple $\epsilon_\pm$ as above, we show that any other embedding $\gamma'$ determining the same ordered pair of ends is properly homotopic to $\gamma$. This verifies that Mittag-Leffler couples are well-defined, and proves the corollary. (The finiteness hypothesis guarantees properness of the homotopy that we make using the proper homotopies of the individual lines.) Let $\{U_i\}$ be a neighborhood system of infinity as in the proof of Lemma~\ref{ML}, and reparametrize the four rays $\gamma_\pm$ and$\gamma'_\pm$  accordingly (fixing 0). As before, we can properly homotope $\gamma'$ to agree with $\gamma$ on $\Z\subset\R$, so that $\gamma$ and $\gamma'$ are related by a doubly infinite sequence of loops. The loop captured between $\pm2$ (starting at $\gamma(0)$, then following $\gamma_-$, $\gamma'$ and (backwards) $\gamma_+$) represents a class in $\pi_1(X,\gamma(0))$ that by hypothesis can be written in the form $w_-w_+$ with $w_\pm\in G_\pm$. After a homotopy of $\gamma'$ supported in $[-2,2]$, we can assume that $\gamma'=\gamma$ on $[-1,1]$, and the innermost loops are given by $w_\pm$ pulled back 
 to $\pi_1(U_1,\gamma(\pm1))$. Working with each sign separately, we now complete the proof of Lemma~\ref{ML}(a), denoting the pullback of $w_\pm$ by $x_1$ as before. By the definition of $G_\pm$, $x_1$ can be assumed to pull back further to $\pi_1(U_2,\gamma_\pm(2))$; let $y_2$ be the inverse of such a pullback. Completing the construction, we see that $z_1=1$, so that $\gamma'$ is then properly homotoped to $\gamma$ rel $[-1,1]$.
\end{proof}

Corollary~\ref{SxR} is most interesting when $n=4$, since classical smoothing theory reduces the higher dimensional case to discussing the Poincar\'e duals of the relevant lines in $H^{n-1}(X,\partial X;\Theta_{n-1})$. When $n=4$, this same discussion applies to the classification of smoothings up to stable isotopy (isotopy after product with $\R$) by the obstruction group $H^3(X,\partial X;\Z/2)$, but one typically encounters uncountably many isotopy classes (and diffeomorphism types) within each stable isotopy class. Note that the above method can be used to study sums of more general \textsc{cat} manifolds along collections of lines. In dimension 4, one can also consider actions on $\Sm(X)$ of the monoid $\Q_k$ of oriented smooth manifolds homeomorphic to a $k$-punctured 4-sphere $\Sigma_k$ with an order on the ends, generalizing the cases $\Q_1=\r$ and $\Q_2=\Q$ considered above. (The monoid operation is summing along $k$-fold unions of rays with a common endpoint; see the end of \cite{mod} for a brief discussion.) However, little is known about this monoid beyond what can be deduced from Corollaries~\ref{R4} and \ref{SxR} and the structure of $\r$ and $\Q$. It follows formally from having infinite sums that $\r$ has no nontrivial invertible elements, and no nontrivial homomorphism to a group \cite{infR4}; cf.\ also Theorem~\ref{hut}. However, the other monoids do not allow infinite sums. This leads to the following reformulation of Question~\ref{inverses}:

\begin{ques}\label{inverses2} Does $\Q$ (or more generally any $\Q_k$, $k\ge2$) have any nontrivial invertible elements? Is $H^3(\Sigma_k;\Z/2)$ the largest possible image of $\Q_k$ under a homomorphism to a group?
\end{ques}

\section{1-handle slides and 0/1-handle cancellation at infinity}\label{slides}

Our uniqueness result for adding $1$-handles at infinity (Theorem~\ref{main}) easily extends to adding both $0$- and $1$-handles at infinity, while allowing infinite slides and cancellation (Theorem~\ref{maincancel}).
With {\em compact} handles of index $0$ and $1$, one may easily construct countable handlebodies that are contractible, but are distinguished by their numbers of ends.
In this regard, adding $0$- and $1$-handles at infinity turns out to be simpler.
For instance, in each dimension at least four, every (at most) countable, connected, and oriented union of $0$- and $1$-handles at infinity is determined by its first Betti number.
As an application of Theorem~\ref{maincancel}, we give a very natural and partly novel proof of the hyperplane unknotting theorem.
The novelty here is that $0$- and $1$-handles at infinity provide the basic framework in which we employ Mazur's infinite swindle.\\

For simplicity, we assume throughout this section that all manifolds are oriented and all handle additions respect orientations.\\

Let $X$ be a possibly disconnected \textsc{cat} $n$-manifold where $n\geq4$.
Add to $X$ a collection of $0$-handles at infinity
$W= \bigsqcup_{i\in J} w_i$ where each $w_i$ is \textsc{cat} homeomorphic to $\R^n$.
The index set $J$ and all others below are discrete and countable.
Attach to $X\sqcup W$ a collection of $1$-handles at infinity
$H=\bigsqcup_{i\in S} h_i$ where each $h_i$ is \textsc{cat} homeomorphic to $[0,1]\times\R^{n-1}$ (see Figure~\ref{fig:add_handles}).
By Definition~\ref{onehandles} and Theorem~\ref{main}, $H$ is determined by multiray data
\begin{figure}[htbp!]
    \centerline{\includegraphics[scale=1.0]{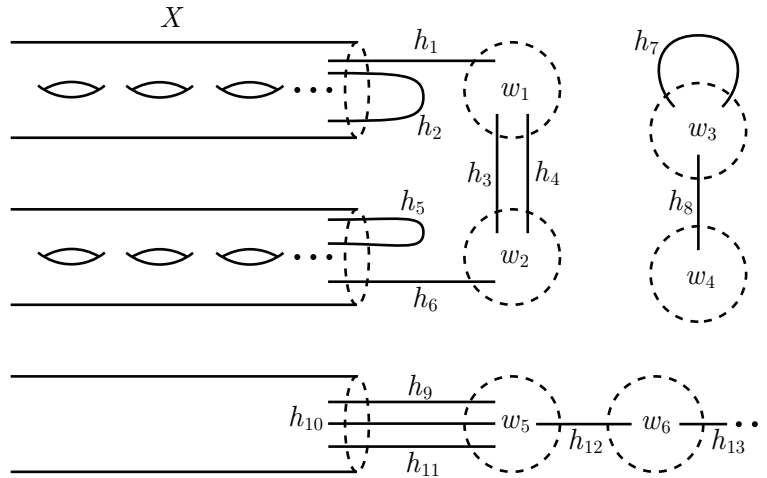}}
    \caption{Manifold $Z$ obtained from the manifold $X$ by adding $0$- and $1$-handles at infinity, the latter denoted by arcs.}
\label{fig:add_handles}
\end{figure}
$\gamma^-,\gamma^+\co S\times [0,\infty)\emb X\sqcup W$ with disjoint images.\\

To this data, we associate a graph $G$ defined as follows (see Figure~\ref{fig:graph}).
\begin{figure}[htbp!]
    \centerline{\includegraphics[scale=1.0]{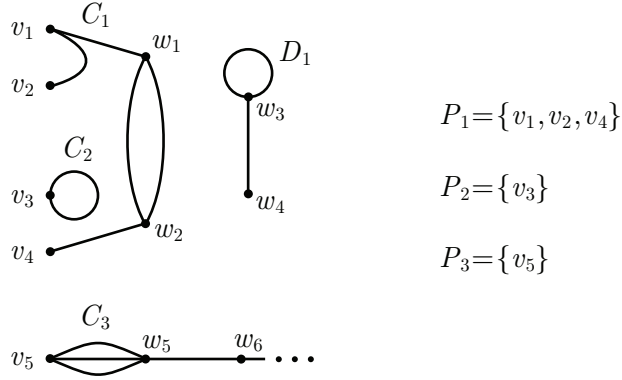}}
    \caption{Graph $G$ associated to the construction in Figure~\ref{fig:add_handles}, and induced partition of the vertices $v_i$ in $X$.}
\label{fig:graph}
\end{figure}
Let $\left\{v_i \mid i \in I \right\}$ be the set of proper homotopy classes of rays in the multiray data for $H$ that lie in $X$.
Each $v_i$ has at least one representative of the form $\gamma^-(j_i)$ or $\gamma^+(j_i)$ for some $j_i \in S$.
The vertex set $V$ of $G$ is:
\[
V:=\left\{v_i \mid i \in I \right\} \sqcup \left\{w_i \mid i\in J \right\}.
\]
The collection $E$ of edges of $G$ is bijective with the $1$-handles at infinity $H$ and thus is indexed by $S$.
The edge $e_i$, $i\in S$, corresponding to $h_i$ is formally defined to be the multiset of the two vertices in $V$
determined by the multiray data of $h_i$.
In particular, $E$ itself is a multiset, and the graph $G$ is countable, but is not necessarily locally finite, connected, or simple.
Indeed, $G$ may have multiple edges and loops.
Let $C=\bigsqcup_{i\in I(C)} C_i$ be the connected components of $G$ such that each component $C_i$ contains a vertex $v_{j(i)}$ in $X$.
Let $D=\bigsqcup_{i\in I(D)} D_i$ be the remaining components of $G$ where each component $D_i$ contains no vertex $v_j$ in $X$.
Notice that $C$ induces a partition $\mathcal{P}=\left\{P_j \mid j\in I(C)\right\}$ of $\left\{v_i \mid i \in I \right\}$
where $P_j$ is the subset of vertices in $\left\{v_i \mid i \in I \right\}$ that lie in $C_j$.
Below, Betti numbers $b_k$ are finite or countably infinite.

\begin{thm}\label{maincancel} For a \textsc{cat} $n$-manifold $X$ with $n\ge4$, the \textsc{cat} oriented homeomorphism type of the manifold $Z$ obtained by adding 0- and 1-handles at infinity to $X$ as above is determined by:
\begin{itemize}
\item[(a)] The set of pairs $\left(P_j,b_1\left(C_j\right)\right)$ where $P_j \in \mathcal{P}$.
\item[(b)] The multiset with elements $b_1(D_i)$ where $i\in I(D)$.
\end{itemize}
\end{thm}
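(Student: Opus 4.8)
The plan is to simplify the handle data through moves that manifestly preserve the oriented homeomorphism type of $Z$, reduce each component of the graph $G$ to a canonical form depending only on the data (a) and (b), and then recognize the resulting manifold directly. The two moves are \emph{$1$-handle slides at infinity} and \emph{$0/1$-handle cancellation}. For the latter I would invoke Corollary~\ref{cancel}: a $0$-handle joined to the remaining data by a single $1$-handle is an end sum with $\R^n$, so it may be deleted together with that $1$-handle without changing $Z$. For the former, I would view a single handle $h_i$ as attached to the \textsc{cat} $n$-manifold $X'$ consisting of $X\sqcup W$ together with all the other handles; sliding a foot of $h_i$ across another handle $h_k$ is a proper homotopy of the defining ray of $h_i$ within $X'$, so Theorem~\ref{main} (applied to the single handle $h_i$, with orientations respected throughout this section) shows that $Z$ is unchanged. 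I would also introduce the model manifolds $M_g:=\inter\bigl(\natural_g(S^1\times D^{n-1})\bigr)$, obtained from $\R^n$ by attaching $g$ $1$-handles at infinity; here $\pi_1(M_g)$ is free of rank $g$, so $b_1(M_g)=g$ and the $M_g$ are pairwise non-homeomorphic.

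These two moves translate into combinatorial moves on $G$: the $0/1$-cancellation deletes a free vertex $w_i$ of degree one together with its incident edge, and the slide replaces an edge $e=(v,u)$ by $(v',u)$ whenever some edge $e'=(v,v')$ shares the endpoint $v$. Both moves leave unchanged (i) the partition of $\{v_i\mid i\in I\}$ recorded in $\mathcal P$, since they never merge or separate components and never remove a vertex $v_i$ lying in $X$, and (ii) the first Betti number of each component, since each deletes a vertex and an edge or merely reroutes an edge. Thus the invariants in (a) and (b) are preserved, and it remains to show that every configuration can be driven to a canonical form determined by them.

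For the reduction I would treat the vertices $v_i$ in $X$ as \emph{fixed} and the $0$-handles $w_i$ as \emph{free}. In a component $C_j$ I would first use slides to make each free vertex a leaf and then cancel it, which is possible because $C_j$ contains a fixed vertex to slide toward; this removes all $0$-handles while preserving connectivity, the block $P_j$, and $b_1(C_j)$. Standard tree-exchange (each exchange realized by a slide) then brings the remaining graph on $P_j$ to any chosen spanning tree, and the leftover $b_1(C_j)$ edges can be slid to become self-loops at a single base vertex. By Theorem~\ref{main} a tree of $1$-handles joining the rays of $P_j$ realizes the iterated end sum determined by the set $P_j$ alone, and the $b_1(C_j)$ self-loops add exactly that many $1$-handles at infinity at the joined end; the result depends only on $P_j$ and $b_1(C_j)$. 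A component $D_i$, having only free vertices, reduces by the same slides and cancellations to a single $\R^n$ carrying $b_1(D_i)$ self-loops, i.e.\ to a disjoint copy of $M_{b_1(D_i)}$. Reassembling, $Z$ is the manifold obtained from $X$ by performing, for each block $P_j$, the end sum joining the rays of $P_j$ and then attaching $b_1(C_j)$ further $1$-handles at infinity there, disjoint-unioned with one copy of $M_{b_1(D_i)}$ for each $i\in I(D)$. This is manifestly determined by (a) and (b).

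The main obstacle I anticipate is controlling the infinitely many moves. The graph $G$ is countable but need not be locally finite, connected, or a finite disjoint union, so the slides and cancellations above must be organized so that the combined ambient isotopy of $X'$ and the limiting gluing remain proper. I would arrange this by fixing a neighborhood system of infinity and performing the moves so that each compact set meets only finitely many handles being slid or canceled at any stage (using the properness of the defining multirays, exactly as in the passage from clustered rays to a multiray in Section~\ref{handles} and in the proof of Lemma~\ref{ML}(b)), so that the resulting infinite composition converges to a \textsc{cat} homeomorphism. The same care guarantees that the infinitely many $0/1$-cancellations can be carried out simultaneously via the infinite form of Corollary~\ref{cancel}, and that the self-loops accumulated at distinct base vertices do not obstruct properness. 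Verifying that these bookkeeping steps preserve the hypotheses of Theorem~\ref{main} at each stage is where the real work lies.
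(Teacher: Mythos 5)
Your overall strategy --- $1$-handle slides and $0/1$-handle cancellations at infinity, followed by reduction of each component of $G$ to a canonical form --- is close in spirit to the paper's proof, and your verification that the two moves preserve the data in (a) and (b) is correct. The genuine gap is the one you flag yourself at the end: your reduction is an \emph{infinite sequence} of moves, and nothing in the proposal produces the limiting homeomorphism. Each slide is justified by a proper homotopy of one attaching ray inside the manifold obtained by attaching \emph{all the other} handles, so the base manifold for Theorem~\ref{main} changes after every move; the slides therefore cannot be bundled into a single application of Theorem~\ref{main} over $X\sqcup W$ and must genuinely be composed. Concretely, a component $D_i$ that is a bi-infinite path of $0$-handles (or any infinite leafless tree) admits no cancellation until slides have been performed, and collapsing it to a single vertex requires infinitely many alternating slides and cancellations whose supports all accumulate on the surviving $0$-handle; likewise a $0$-handle of infinite degree forces infinitely many feet to be rerouted through a common $1$-handle before it can be cancelled. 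Your proposed control (``each compact set meets only finitely many handles being moved at any stage'') is not available in these configurations, so the infinite composition is not shown to converge.

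The idea you are missing --- and the way the paper avoids every one of these convergence issues --- is a direct geometric recognition: after making all attaching rays radial (Corollary~\ref{maincor}), the union of the $0$-handles and $1$-handles at infinity corresponding to \emph{any} tree, finite or infinite, is a nested union of $n$-disks, hence a single copy of $\R^n$. No moves are needed. This immediately handles each $D_i$: a spanning tree collapses to $\R^n$, and the remaining $b_1(D_i)$ handles attach at its simply connected (hence Mittag-Leffler) end, where Corollary~\ref{maincor} gives uniqueness. For $C_j$ the paper then goes in the opposite direction from you: rather than eliminating the $0$-handles, it introduces one cancelling $0/1$-pair at each $v_k$ and reroutes all old handles at $v_k$ onto the new $0$-handle (a single simultaneous application of Corollary~\ref{cancel} and Theorem~\ref{main}); the $0$-handle part of the resulting graph is connected, so a spanning tree collapses it to a single $\R^n=w_0$, leaving exactly one edge from each $v_k\in P_j$ to $w_0$ and $b_1(C_j)$ loops at $w_0$. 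Besides eliminating the convergence problem, this canonical form sidesteps a smaller gap in your write-up: your form requires choosing a spanning tree on $P_j$ itself, and independence of that choice does not follow from Theorem~\ref{main} alone (a path and a star on the same vertex set have different end functions), so it would require yet more slides.
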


Thus, we only need to keep track of which proper homotopy classes of rays in $X$ are used by at least one 1-handle (encoded as the vertices in each $P_j$), together with the most basic combinatorial data of the new handles. When the relevant ends are Mittag-Leffler, we can replace the ray data by the set of corresponding ends. The theorem implies that all 0-handles at infinity can be canceled except for one in each component of $Z$ disjoint from $X$, and that we can slide 1-handles over each other whenever their attaching rays are properly homotopic (e.g., whenever they determine the same Mittag-Leffler end). Furthermore, any reasonable notion of infinitely iterated handle sliding is allowed.

\begin{proof}
First, consider a component $D_i$ of $G$.
Let $M$ denote the component of $Z$ corresponding to $D_i$.
By Corollary~\ref{maincor}, we can and do assume that the rays used to attach $1$-handles at infinity in $M$ are radial (while still remaining proper and disjoint).
Then when $D_i$ is a tree, we can easily describe $M$ as a nested union of smooth $n$-disks, so it is a copy of $\R^n$.
In general, a spanning tree $T$ of $D_i$ determines a copy of $\R^n$ in $M$ (namely, one ignores a subset of the $1$-handles at infinity).
Thus, $M$ is $\R^n$ with $b_1(D_i)$ $1$-handles at infinity attached.
By Corollary~\ref{maincor}, such a manifold is determined by $b_1(D_i)$.\\

Second, consider a component $C_j$ of $G$.
Let $N$ denote the component of $Z$ corresponding to $C_j$.
Let $N'$ be the $n$-manifold obtained from $N$ as follows.
For each vertex $v_k$ in $C_j$, introduce a $0/1$-handle pair at infinity where the new $1$-handle
at infinity attaches to a ray in the class $v_k$ and to a ray in the new $0$-handle at infinity.
Also, the $1$-handles at infinity in $N$ attached to rays in the class of $v_k$ attach in $N'$ 
to rays in the new $0$-handle at infinity.
Theorem~\ref{main} implies that $N$ and $N'$ are \textsc{cat} oriented homeomorphic.
The graph $C'_j$ corresponding to $N'$ is obtained from $C_j$ by adding a leaf to each $v_k$.
Let $T$ be a spanning tree of the connected graph obtained by removing the new leaves from $C'_j$.
Then, $T$ determines a copy of $\R^n$ in $N'$.
This exhibits $N'$ as: the components of $X$ containing the vertices in $P_j$, a single $0$-handle at infinity $w_0$,
$b_1(C_j)$ oriented $1$-handles at infinity attached to $w_0$, and an oriented $1$-handle at infinity from each $v_k \in P_j$ to $w_0$.
\end{proof}

As an application of $1$-handle slides and $0/1$-handle cancellation at infinity, we prove the hyperplane unknotting theorem of Cantrell~\cite{C63} and Stallings~\cite{Stall65}. Recall that we assume \textsc{cat} embeddings are locally flat.

\begin{thm}\label{hut}
Let $f\co \R^{n-1}\to\R^n$ be a proper \textsc{cat} embedding where $n\geq4$, and let $H=f\left(\R^{n-1}\right)$.
Then, there is a \textsc{cat} homeomorphism of $\R^n$ that carries $H$ to a linear hyperplane.
\end{thm}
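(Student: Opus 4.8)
The plan is to prove the theorem in three movements: separate $\R^n$ along $H$, identify each complementary side with a standard half-space, and reassemble. First I would set up the separation. Since \textsc{cat} embeddings are locally flat and two-sided in codimension one, $H=f(\R^{n-1})$ is bicollared in $\R^n$ by the facts quoted in Section~\ref{handles}. A standard separation argument (a proper Mayer--Vietoris computation using the bicollar, or Alexander duality with compact supports) then shows that $H$ separates $\R^n$ into exactly two closed, connected pieces $A$ and $B$ with $A\cap B=\partial A=\partial B=H$ and $A\cup B=\R^n$; each is a one-ended \textsc{cat} $n$-manifold whose boundary is \textsc{cat} homeomorphic to $\R^{n-1}$, and the bicollar equips each side with a collar $\R^{n-1}\times[0,1)$ of $H$. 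The theorem then reduces to the assertion that each side is a standard half-space: granting \textsc{cat} homeomorphisms $A\cong\R^{n-1}\times[0,\infty)$ and $B\cong\R^{n-1}\times(-\infty,0]$ that agree on a common collar of $H$, uniqueness of collars (Brown) lets me glue them into a self-homeomorphism of $\R^n$ carrying $H$ to the linear hyperplane $\R^{n-1}\times\{0\}$.

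\textbf{The swindle.} The heart of the matter is to show that one side, say $A$, is a half-space, and this is where Mazur's swindle enters through the bookkeeping of Section~\ref{slides}. The idea is to exhibit the open manifold $\inter A$ (equivalently, $A$ with its boundary collar opened out to infinity) as a connected, oriented union of $0$- and $1$-handles at infinity whose underlying graph is a tree, i.e.\ with first Betti number $0$. To produce such a presentation I would push the complexity of $A$ out toward its single end: using the collar, insert a standard half-space far out along a proper ray as a $0$-handle at infinity and connect it by a $1$-handle at infinity, an operation that by Corollary~\ref{cancel} (end summing with $\R^n$) leaves the homeomorphism type unchanged. Iterating this Mazur-style insertion telescopes $A$ into $\R^n$ together with countably many canceling $0/1$-handle pairs at infinity. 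Because the resulting handlebody is connected, oriented, and contractible, Theorem~\ref{maincancel} identifies it with the unique such model, namely the standard $\R^n$ (a ball with its boundary sphere removed); carrying the collar of $H$ along through the construction upgrades this to $A\cong\R^{n-1}\times[0,\infty)$, and symmetrically for $B$.

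\textbf{Main obstacle.} The hard part is precisely this realization step: proving that a complementary region of $H$ can be built using only handles of index $0$ and $1$ at infinity, with none of index $\ge 2$. This is the genuine Schoenflies content of the theorem, and it is exactly what the infinite swindle must supply---the handle calculus of Theorem~\ref{maincancel} is only the normal-form engine that finishes the argument once a region is in $0/1$-handle form. Two features make this step delicate and dimension-sensitive. The constructions take place noncompactly, so every isotopy and handle slide used to push complexity to infinity must be shown to be proper; here I would lean on Lemma~\ref{ambient}, whose purpose is to promote proper homotopies of multirays to \textsc{cat} ambient isotopies for $n\ge 4$, including the finger-move subtlety at $n=4$. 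Moreover, the absence of higher-index handles is exactly what fails in dimension $3$ (where rays can knot), which is why the hypothesis $n\ge 4$ is essential and why an \emph{infinite} swindle, rather than a finite handle cancellation, is required.

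\textbf{Conclusion and corollary.} Once $A$ and $B$ are known to be half-spaces, the reassembly in the first movement is routine and yields the desired \textsc{cat} homeomorphism of $\R^n$. The \textsc{top} Schoenflies theorem then follows as the stated corollary: removing a point from $S^n$ in the complement of a locally flat $S^{n-1}$ presents the configuration as a proper hyperplane in $\R^n$, so the closures of the two complementary regions are half-spaces, hence \textsc{top} disks after recompactifying.
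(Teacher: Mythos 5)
There is a genuine gap at exactly the point you flag as ``the hard part,'' and the proposal does not close it. The engine of the paper's proof is not a presentation of $A$ by $0$- and $1$-handles at infinity; it is the identity $A\natural B\cong\R^n$, where $A$ and $B$ are the two complementary regions each thickened by an open collar of $H$ on the opposite side, and $\natural$ is end sum along rays $a\subset A$, $b\subset B$ running out the collars (Lemma~\ref{ABcancel}). That identity is the entire geometric content: the two pieces overlap in the bicollar of $H$, and attaching a $1$-handle at infinity with tubular neighborhoods exhausting the collars simply reassembles $\R^n$. With it in hand, the swindle is the formal computation
\[
A\cong A\natural\R^n\natural\R^n\natural\cdots\cong A\natural(B\natural A)\natural(B\natural A)\natural\cdots\cong(A\natural B)\natural(A\natural B)\natural\cdots\cong\R^n,
\]
and Theorem~\ref{maincancel} is what licenses the infinite regrouping (slides and cancellations of infinitely many $0$/$1$-handles at infinity). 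Your proposed mechanism --- repeatedly inserting standard half-spaces as $0$-handles at infinity joined to $A$ by $1$-handles --- is exactly Corollary~\ref{cancel}: it leaves $A$ unchanged and provides no decomposition of $A$ itself into $0$- and $1$-handles at infinity. Nothing in that process ever ``cancels'' any part of $A$; the other region $B$, which never appears in your swindle step, is the indispensable canceling partner. So the proposal identifies where the Schoenflies content lives but replaces the actual argument with a tautology.

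A secondary gap: the handle calculus produces a homeomorphism of open manifolds $(A,a)\cong(\R^n,c)$, and passing from this to $A'\cong\R^n_+$ (the closed region with boundary $H$) is not automatic. The paper needs Lemma~\ref{pairs_std} for this, using uniqueness of closed regular neighborhoods of the ray in \textsc{diff}/\textsc{pl} and uniqueness of open mapping cylinder neighborhoods (Kwun--Raymond) in \textsc{top}; your phrase ``carrying the collar along through the construction'' elides a real step, particularly in \textsc{top}. The separation setup and the final reassembly via Lemma~\ref{ABcuhs} in your first and last paragraphs do match the paper.
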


A \textsc{cat} ray in $\R^k$ is \emph{unknotted} provided there is a \textsc{cat} homeomorphism of $\R^k$ that carries the ray to a linear ray. Recall that each \textsc{cat} ray in $\R^k$, $k\geq4$, is unknotted. For \textsc{cat}=\textsc{pl} and \textsc{cat}=\textsc{diff}, this fact follows from general position, but for \textsc{cat}=\textsc{top} it is nontrivial and requires Homma's method (see Lemma~\ref{ambient} above and~\cite[\S~7]{CKS}). Thus, the following holds under the hypotheses of Theorem~\ref{hut} by taking $r$ to be the image under $f$ of a linear ray in $\R^{n-1}$: \emph{There is a \textsc{cat} ray $r\subset H$ that is unknotted in both $H$ and $\R^n$, where the former means $f^{-1}(r)$ is unknotted in $\R^{n-1}$.}

The hyperplane $H$ separates $\R^n$ into two connected components by Alexander duality.
Let $A'$ and $B'$ denote the closures in $\R^n$ of these two components as in Figure~\ref{fig:A_and_B}.
\begin{figure}[htbp!]
    \centerline{\includegraphics[scale=1.0]{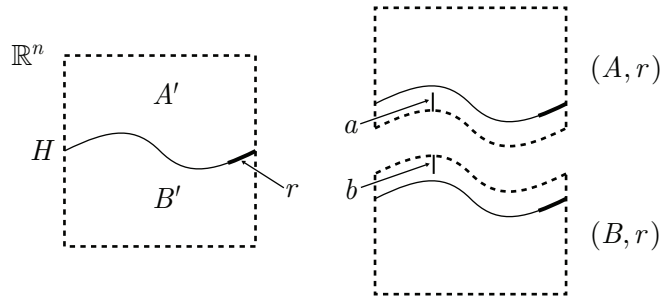}}
    \caption{Closures $A'$ and $B'$ of the complement of $H$ in $\R^n$ (left) and their unions $A$ and $B$ with open collars on $H$ (right).}
\label{fig:A_and_B}
\end{figure}
So, $\partial A'=H=\partial B'$, and $H$ has a bicollar neighborhood in $\R^n$.
Using the bicollar, define:
\begin{align*}
A:=&A'\cup(\textnormal{open collar on $H$ in $B'$})\\
B:=&B'\cup(\textnormal{open collar on $H$ in $A'$})
\end{align*}
as in Figure~\ref{fig:A_and_B}.
Figure~\ref{fig:A_and_B} also depicts \textsc{cat} rays $a\subset A$ and $b\subset B$ that are radial with respect to the collarings.
Evidently, $a$ and $b$ are \textsc{cat} ambient isotopic to $r$ in $A$ and $B$ respectively. (These simple isotopies have support in a neighborhood of the open collars).\\

\begin{lem}\label{ABcuhs}
It suffices to show that $A'$ and $B'$ are \textsc{cat} homeomorphic to closed upper half-space $\R^n_+$.
\end{lem}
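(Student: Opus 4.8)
The plan is to realize $\R^n$ as the union of the two half-spaces $A'$ and $B'$ glued along their common boundary $H$, and to convert the two hypothesized homeomorphisms into a single self-homeomorphism of $\R^n$ carrying $H$ to the standard hyperplane $\R^{n-1}\times\{0\}$. So I would assume \textsc{cat} homeomorphisms $\phi_A\co A'\to\R^n_+$ and $\phi_B\co B'\to\R^n_+$; composing the latter with the reflection $(x,t)\mapsto(x,-t)$, I may instead take $\phi_B\co B'\to\R^n_-$, where $\R^n_\pm$ denote the closed upper and lower half-spaces sharing the boundary $\R^{n-1}=\R^{n-1}\times\{0\}$. Restricting to $H=\partial A'=\partial B'$ yields \textsc{cat} homeomorphisms $\alpha,\beta\co H\to\R^{n-1}$, and the target $\R^n=\R^n_+\cup_{\R^{n-1}}\R^n_-$ is the standard $\R^n$ with its standard hyperplane.

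The conceptual crux is that $\phi_A$ and $\phi_B$ need not induce the same identification of $H$ with $\R^{n-1}$: they differ by the \textsc{cat} self-homeomorphism $g:=\beta\circ\alpha^{-1}$ of $\R^{n-1}$. To absorb this discrepancy I would extend $g$ over the half-space using the product structure $\R^n_-=\R^{n-1}\times(-\infty,0]$, setting $\hat g(x,t)=(g(x),t)$, a \textsc{cat} self-homeomorphism of $\R^n_-$ restricting to $g$ on the boundary. Replacing $\phi_B$ by $\hat g^{-1}\circ\phi_B$ then arranges that $\phi_B|_H=\alpha=\phi_A|_H$. Defining $\Phi\co\R^n\to\R^n$ to be $\phi_A$ on $A'$ and $\hat g^{-1}\phi_B$ on $B'$, the two pieces agree on $H$, so $\Phi$ is a well-defined bijection that is a \textsc{cat} homeomorphism on each closed piece and carries $H$ onto the linear hyperplane $\R^{n-1}\times\{0\}$.

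The main obstacle is verifying that $\Phi$ is genuinely \textsc{cat} across $H$ --- in particular smooth when \textsc{cat}$=$\textsc{diff}, since merely pasting two diffeomorphisms that agree along a boundary gives only a homeomorphism in general. I would resolve this with collars. Since $H$ is a locally flat, two-sided, codimension-one submanifold of $\inter\R^n$, it is bicollared; choosing a bicollar $H\times(-\epsilon,\epsilon)$ whose halves are boundary collars of $H$ in $A'$ and in $B'$, I would invoke uniqueness of collars to modify $\phi_A$ and $\hat g^{-1}\phi_B$ by ambient isotopies supported near $H$ (rel $H$, so $\phi_A|_H=\alpha$ is preserved) until each is in the product form $(h,t)\mapsto(\alpha(h),t)$ on its collar. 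Then $\Phi$ restricts on the entire bicollar to $\alpha\times\id$, a manifest \textsc{cat} homeomorphism onto $\R^{n-1}\times(-\epsilon,\epsilon)$, so $\Phi$ is \textsc{cat} across $H$, while away from $H$ it is one of the two \textsc{cat} homeomorphisms. In \textsc{top} and \textsc{pl} the pasting lemma already makes this step routine, but the collar argument handles all three categories uniformly, yielding the desired \textsc{cat} homeomorphism of $\R^n$ and completing the reduction.
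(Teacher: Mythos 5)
Your proposal is correct and follows essentially the same route as the paper: compose one of the two homeomorphisms with a reflection, then with a level-preserving extension $j\times\id$ of the boundary discrepancy so the two maps agree pointwise on $H$, and paste (with the \textsc{diff} case handled by smoothing along collars, for which the paper simply cites Hirsch). The only difference is cosmetic --- you correct the $B'$ side by $\hat g^{-1}$ where the paper post-composes with $j\times\id$ for $j=g\circ h^{-1}|_{\R^{n-1}}$ --- and your collar argument is just a spelled-out version of the cited smoothing step.
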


\begin{proof}
We are given \textsc{cat} homeomorphisms $g\co A'\to\R^n_+$ and $h\co B'\to\R^n_+$.
Replace $h$ by its composition with a reflection so that $h$ maps $B'\to\R^n_-$.
Note that $g$ and $h$ need not agree pointwise on $H$.
Identify $\R^{n-1}\times\left\{0\right\}$ with $\R^{n-1}$.
We have a \textsc{cat} homeomorphism $j\co \R^{n-1}\to\R^{n-1}$ given by the restriction of $g\circ h^{-1}$ to $\R^{n-1}$.
Define the \textsc{cat} homeomorphism $k\co B'\to\R^n_-$ by $k=(j\times\textnormal{id})\circ h$ (that is, compose $h$ with $j$ at each height).
Now, $g$ and $k$ agree pointwise on $H$. For \textsc{cat}=\textsc{top} and \textsc{cat}=\textsc{pl}, the proof of the lemma is complete.
For \textsc{cat}=\textsc{diff}, one smooths along collars as in Hirsch~\cite[Theorem~1.9, p.~182]{H94}.
\end{proof}

We will use the symbols in Figure~\ref{fig:hut_notation} to denote the indicated manifold/ray pairs.
Here, $c$ is a radial ray in $\R^n$.
\begin{figure}[htbp!]
    \centerline{\includegraphics[scale=1.0]{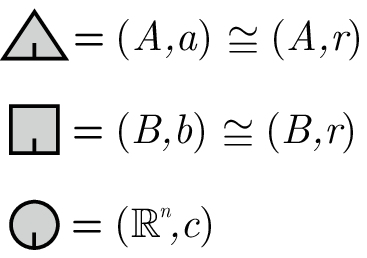}}
    \caption{Notation for relevant manifold/ray pairs.}
\label{fig:hut_notation}
\end{figure}
All rays in this proof, such as $a$ and $b$, will be parallel (\textsc{cat} ambient isotopic) to $r$ or $c$.
An added $1$-handle at infinity will be denoted by an arc connecting such symbols as in Figure~\ref{fig:hut_cancel}.\\\

\begin{lem}\label{ABcancel}
All three of the manifold/ray pairs in Figure~\ref{fig:hut_cancel} are \textsc{cat} homeomorphic to one another.
\begin{figure}[htbp!]
    \centerline{\includegraphics[scale=1.0]{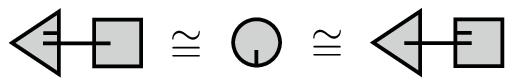}}
    \caption{Isomorphic manifold/ray pairs.}
\label{fig:hut_cancel}
\end{figure}
\end{lem}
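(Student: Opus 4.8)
My reading of Figure~\ref{fig:hut_cancel} is that the three manifold/ray pairs are the standard pair $(\R^n,c)$ together with the two pairs obtained by joining $(A,a)$ and $(B,b)$ by a single $1$-handle at infinity (the connecting arc), the two differing only in which leftover ray is marked. The plan is to reduce the entire statement to one geometric fact, namely that tubing $A$ to $B$ along rays parallel to $r$ reconstitutes $(\R^n,c)$, and then to obtain the remaining identifications formally. All attaching rays here are \textsc{cat} ambient isotopic to $r$ or $c$, hence unknotted, so they are properly homotopic; by Lemma~\ref{ML}(b) and Theorem~\ref{main} the attachment is independent of the chosen representative ray and of the auxiliary tubular-neighborhood data, and the marked ray can be tracked through every move. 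Moreover, any standard $\R^n$-block that appears joined by a lone $1$-handle may be deleted by $0/1$-cancellation at infinity (Corollary~\ref{cancel}), so any two configurations differing by such a block are already \textsc{cat} homeomorphic as pairs.

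The geometric core is thus the identification of $A$ joined to $B$ by a $1$-handle at infinity with $(\R^n,c)$, and I would prove this by realizing the end sum inside $\R^n$ rather than abstractly. Using the bicollar of $H$, write $A\cap B$ as a bicollar $H\times(-1,1)$ with $H=H\times\{0\}$, and place $a$ and $b$ as the rays $r\times\{-\tfrac12\}$ and $r\times\{+\tfrac12\}$ pushed to opposite sides of $H$. Since $r\subset H$ is unknotted and bicollared in $H$, and $H$ is bicollared in $\R^n$, a closed tubular neighborhood $\bar T$ of $r$ in $\R^n$ is a product $[0,\infty)\times\bar D^{n-2}\times[-1,1]$ split by $H=\{s=0\}$ into an upper and a lower half. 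I take these two halves (thickened slightly around $s=\pm\tfrac12$) as the closed tubular neighborhoods of $a$ and $b$ used to form the end sum. Removing their interiors and regluing the resulting $\R^{n-1}$ boundary components, in the remove-tubes-and-glue description following Definition~\ref{onehandles}, then yields a manifold that is readily identified with $\R^n$ in these product coordinates, with the handle core isotopic to the radial ray $c$. The two orderings ($A$ first versus $B$ first) are interchanged by the reflection of $\R^n$ across the bicollar, which fixes $r$ and reverses orientation; composing it with the reflection already built into the $1$-handle gluing exhibits the two marked pairs as orientation-respecting \textsc{cat} homeomorphic, both identified with $(\R^n,c)$.

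The main obstacle is making this reassembly rigorous in \textsc{cat}$=$\textsc{top}. In \textsc{diff} and \textsc{pl} the product structure of $\bar T$ and the matching of the two $\R^{n-1}$ gluing regions are routine from general position and the collaring theorems. In \textsc{top} one must know that a tubular neighborhood of the unknotted, bicollared ray $r$ really is a standard product and that the induced gluing is standard; this is exactly where local flatness of $H$, the bicollaring results quoted in the introduction, and Homma's method (as packaged in Lemma~\ref{ambient}) are needed. I would also carry out the orientation bookkeeping carefully, so that the reflection gluing $\rho^\pm$ of Definition~\ref{onehandles} produces oriented $\R^n$ rather than a twisted or reversed identification, and appeal to Theorem~\ref{main} to guarantee that the outcome is insensitive to the choices of $\bar T$, of $\varphi^\pm$ and $\rho^\pm$, and of the representative rays, so that the marked ray may be normalized to $c$. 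Once this single homeomorphism of the joined pair with $(\R^n,c)$ is in hand, the three pairs of Figure~\ref{fig:hut_cancel} coincide by the formal cancellation and proper-homotopy moves of the first paragraph.
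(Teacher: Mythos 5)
Your outer strategy coincides with the paper's: reduce everything to the single claim that attaching a $1$-handle at infinity to $(A,a)\sqcup(B,b)$ produces $\R^n$, then handle the marked rays $a'$, $b'$ by shrinking the tubular neighborhoods off of them and invoking unknottedness of rays in $\R^n$ for $n\ge4$. That formal layer, together with your appeals to Theorem~\ref{main} for independence of choices, is fine. The gap is in the geometric core. You take as closed tubular neighborhoods of $a$ and $b$ the two straight product halves of a tube $[0,\infty)\times \bar{D}^{n-2}\times[-1,1]$ around $r$, remove their interiors, reglue, and assert the result is ``readily identified with $\R^n$ in these product coordinates.'' It is not. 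Since $A=A'\cup(\text{open collar in }B')$ and $B=B'\cup(\text{open collar in }A')$ each contain the entire bicollar of $H$, the disjoint union $A\sqcup B$ carries that bicollar twice; after deleting your tubes, both leftover external collars are untouched, and the regluing identifies the two pieces only along a single $\R^{n-1}$ (the tube frontier), whereas $\R^n=A'\cup_H B'$ is glued along \emph{all} of $H$. Nothing in your product coordinates absorbs the redundant collars or spreads the gluing locus from the tube boundary out to all of $H$ --- and that is precisely the content of the claim, not bookkeeping.

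The paper's proof confronts exactly this point: the claim would be evident if one could use the \emph{full} collars in the $\R^{n-1}$ directions as tubular neighborhoods, but those do not extend to closed tubular neighborhoods as Definition~\ref{onehandles} requires. So the paper instead uses the radial rays in the collars with neighborhoods that flare (horizontal disks of radius $1/(1-t)$ at collar height $t$), exhausting the whole collar at infinity; only then does ``reparameterizing collars'' identify the result with $\R^n$, with the leftover collar material properly absorbed. A constant-radius tube around a push-off of $r$ has no such exhaustion property. Relatedly, you locate the essential difficulty in \textsc{cat}$=$\textsc{top} (product structure of the tube, Homma's method), but the collar-absorption issue above is equally present in \textsc{diff} and \textsc{pl}; conversely, once the flaring neighborhoods are used, the bicollaring already arranged in Figure~\ref{fig:A_and_B} suffices in all three categories. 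To repair your argument you would either need to import the flaring construction --- at which point the realization inside $\R^n$ adds nothing --- or independently prove that gluing $\inter A'$ minus a tube to $\inter B'$ minus a tube along the tube boundaries reproduces $A'\cup_H B'$, which is essentially the lemma itself.
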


\begin{proof}
First, we claim that adding a $1$-handle at infinity to $(A,a)\sqcup(B,b)$ yields $\R^n$.
Recalling the collars in Figure~\ref{fig:A_and_B}, the claim would be evident if we could choose the tubular neighborhood maps for the $1$-handle at infinity to be the full collars in the $\R^{n-1}$ directions. However, an open tubular neighborhood must, by our definition, extend to a closed tubular neighborhood. So, instead we use smaller tubular neighborhoods inside the collars as follows. Identify the collar on $H$ in $A$ with $\R^{n-1}\times [0,1)$ so that $H$ corresponds to $\R^{n-1}\times\left\{0\right\}$ and the ray $a$ corresponds to $\left\{0\right\}\times [1/2,1)$. For each $t\in[1/2,1)$, there is an open horizontal $(n-1)$-disk in $\R^{n-1}\times[0,1)$ at height $t$, of radius $1/(1-t)$, and with center on $a$. The union of these disks is our desired open tubular neighborhood of $a$. Similarly, we obtain an open tubular neighborhood of $b$ using the compatible collar in $B$. The claim follows by attaching the $1$-handle at infinity using these tubular neighborhood maps and reparameterizing collars.
Next, let $a'$ and $b'$ be the indicated rays in Figure~\ref{fig:hut_cancel} parallel to $a$ and $b$ respectively.
The lemma follows by shrinking the above tubular neighborhood maps in the $\R^{n-1}$ directions to be disjoint from $a'$ and $b'$ respectively.
\end{proof}

\begin{lem}\label{pairs_std}
It suffices to prove that $(A,a)$ and $(B,b)$ are \textsc{cat} homeomorphic as pairs to $(\R^n,c)$.
\end{lem}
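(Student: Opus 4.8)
My plan is to prove the reduction directly: assuming $(A,a)$ and $(B,b)$ are \textsc{cat} homeomorphic as pairs to $(\R^n,c)$, I will deduce that $A'$ and $B'$ are each \textsc{cat} homeomorphic to $\R^n_+$, after which Lemma~\ref{ABcuhs} delivers the theorem. By symmetry it suffices to treat $A'$.

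The geometric engine is the observation that $A'$ is recovered from the pair $(A,a)$ by deleting an open tubular neighborhood of the ray. By construction $A=A'\cup(\text{open collar})$ and $a$ is the collar ray $\{0\}\times[1/2,1)$ of Lemma~\ref{ABcancel}; I would take the open tubular neighborhood $N(a)$ to be the growing ``thermometer'' whose radius tends to infinity at the open end of the collar (exactly the neighborhood used in Lemma~\ref{ABcancel}). The key step I would isolate is purely local: the open collar minus $\inter N(a)$ is \textsc{cat} homeomorphic to a closed product collar $\partial A'\times[0,1]$, with $\partial A'\times\{0\}$ identified to $\partial A'$. Granting this, $A\setminus\inter N(a)=A'\cup_{\partial A'}(\partial A'\times[0,1])\cong A'$, since attaching an external product collar to a manifold with boundary returns a homeomorphic manifold.

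I would then apply the same local statement to the standard model. Writing $P^{\circ}$ for a manifold $P$ (with $\partial P\cong\R^{n-1}$) equipped with an open external collar, and $\rho$ for its collar ray, the local step gives $P^{\circ}\setminus\inter N(\rho)\cong P$. Taking $P=\R^n_+$, the open manifold $(\R^n_+)^{\circ}$ is \textsc{cat} homeomorphic to $\inter\R^n_+\cong\R^n$, in which $\rho$ becomes a proper ray; since every \textsc{cat} ray in $\R^n$ ($n\ge4$) is unknotted, $((\R^n_+)^{\circ},\rho)\cong(\R^n,c)$ as pairs. Now I chain homeomorphisms: $A'\cong A\setminus\inter N(a)$ by the local step; then $A\setminus\inter N(a)\cong(\R^n_+)^{\circ}\setminus\inter N(\rho)$ by transporting along the pair homeomorphism $(A,a)\cong(\R^n,c)\cong((\R^n_+)^{\circ},\rho)$, where I invoke uniqueness of tubular neighborhoods (Lemma~\ref{ambient}) so that the two chosen neighborhoods correspond up to ambient isotopy and hence have homeomorphic complements; and finally $(\R^n_+)^{\circ}\setminus\inter N(\rho)\cong\R^n_+$ by the local step again. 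Thus $A'\cong\R^n_+$, and likewise $B'\cong\R^n_+$.

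The main obstacle is the local step in \textsc{top}: that an open collar with the open thermometer deleted is a \emph{closed} product collar (the \textsc{diff} and \textsc{pl} cases being routine). I would establish it by an explicit homeomorphism, exploiting that the deleted tube fills the open end, so that its boundary $\partial N(a)\cong\R^{n-1}$ caps the collar off into a compact cobordism between two copies of $\R^{n-1}$; the difficulty is producing the product structure \emph{locally flatly} rather than merely up to proper homotopy. I would build the homeomorphism by hand from the radius function defining the thermometer, or reduce to the \textsc{diff} case by the smoothing technique used in the proof of Lemma~\ref{ambient}. The remaining ingredients — transport via Lemma~\ref{ambient}, unknottedness of rays in $\R^n$, and absorption of an external collar — are standard.
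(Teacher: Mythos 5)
Your reduction to Lemma~\ref{ABcuhs} and your overall strategy --- realize $A'$ as the complement of a neighborhood of the ray $a$, transport to the standard model via the hypothesized pair homeomorphism, and invoke a uniqueness-of-neighborhoods statement --- match the paper's, but the neighborhood you use and the uniqueness theorem you lean on are genuinely different. The paper takes the neighborhood to be the full collar $V$ itself: in \textsc{diff} and \textsc{pl} it views $V$ as a closed regular neighborhood of $a$ with boundary $H$ and complement exactly $A'$, and quotes uniqueness of such neighborhoods; in \textsc{top} it forms the open bicollar $U\cup V$, observes that both it and $\R^n$ are open mapping cylinder neighborhoods of a ray, and applies Kwun--Raymond uniqueness \cite{KR} to get a homeomorphism fixing $g(V)$ pointwise, whence $A'\cong U\cong\R^n_+$ with no tube manipulation at all. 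You instead use the thermometer tube $N(a)$, Lemma~\ref{ambient} for uniqueness, and explicit coordinate homeomorphisms (your ``local step'') to identify the tube complements with $A'$ and with $\R^n_+$. Your route is more self-contained (it reuses only machinery the paper has already established) and is uniform across categories, but it concentrates the \textsc{top} delicacy in two places you must still nail down: (i) the local step, which does go through because the thermometer is given in explicit collar coordinates (the complement of the interior of the closed tube is $\{(x,t): 0\le t\le f(|x|)\}$ for a suitable $f$, visibly a product); and (ii) the claim that the complement of the interior of a closed tubular neighborhood of $c$ in $\R^n$ depends only on the core. For (ii), Lemma~\ref{ambient} only matches two tubes on a neighborhood of the core, so you must additionally check that the shell between a tube and a coordinate sub-tube is a product collar --- and here you should take the sub-tube around a sub-ray $c|[1,\infty)$, since a merely radial shrinking leaves a degenerate flap at the end cap $\{0\}\times D^{n-1}$ and the difference set is then not a manifold --- and finally absorb external collars using the collar theorem. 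All of this is doable and you have flagged the right issues, so I regard the argument as correct; just be aware that the paper's mapping-cylinder argument exists precisely to package these shell and end-cap subtleties into a single citable theorem.
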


\begin{proof}
First, consider the cases \textsc{cat}=\textsc{diff} and \textsc{cat}=\textsc{pl}.
The collar on $H$ in $A$ is a \textsc{cat} \emph{closed} regular neighborhood of $a$ in $A$ with boundary $H$.
Using the hypothesis $(A,a)\cong (\R^n,c)$, apply uniqueness of such neighborhoods in $(\R^n,c)$ to see that $A'$ is \textsc{cat} homeomorphic to $\R^n_+$.
Similarly, $B'$ is \textsc{cat} homeomorphic to $\R^n_+$.
Now, apply Lemma~\ref{ABcuhs}.\\

For \textsc{cat}=\textsc{top}, we are given a homeomorphism $g\co (A,a)\to(\R^n,c)$.
Let $V\cong\R^n_+$ be the collar added to $A'$ along $H$ to obtain $A$ as in Figure~\ref{fig:A_and_B}.
Let $U\cong\R^n_+$ be a collar on $H$ in $A$ on the opposite side of $H$ as in Figure~\ref{fig:mcn}.
\begin{figure}[htbp!]
    \centerline{\includegraphics[scale=1.0]{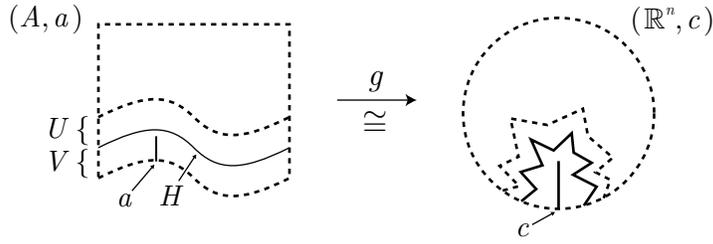}}
    \caption{Homeomorphic manifold/ray pairs $(A,a)$ and $(\R^n,c)$. Also depicted are the hyperplane $H$, the collar $V$ added to $A'$ to obtain $A$, a collar $U$ on the other side of $H$, and their images in $\R^n$.}
\label{fig:mcn}
\end{figure}
Recall that $\R^n$ itself is an open mapping cylinder neighborhood of $c$ in $\R^n$ (see~\cite{KR} and~\cite[pp.~1816,1831]{CKS}).
Similarly, $U\cup V$ is an open mapping cylinder neighborhood of $a$ in $U\cup V$.
So, $g(U\cup V)$ is another open mapping cylinder neighborhood of $c$ in $\R^n$.
Uniqueness of such neighborhoods (see~\cite{KR} and~\cite{CKS}) implies there exists a homeomorphism $h\co g(U\cup V)\to\R^n$ that fixes $g(V)$ pointwise.
Therefore:
\[
g(U)\cong \R^n-\textnormal{Int}g(V)=g(A').
\]
Hence, $A'\cong U \cong \R^n_+$. Similarly, $B'$ is homeomorphic to $\R^n_+$.
Again, Lemma~\ref{ABcuhs} completes the proof.
\end{proof}

\begin{figure}[htbp!]
    \centerline{\includegraphics[scale=1.0]{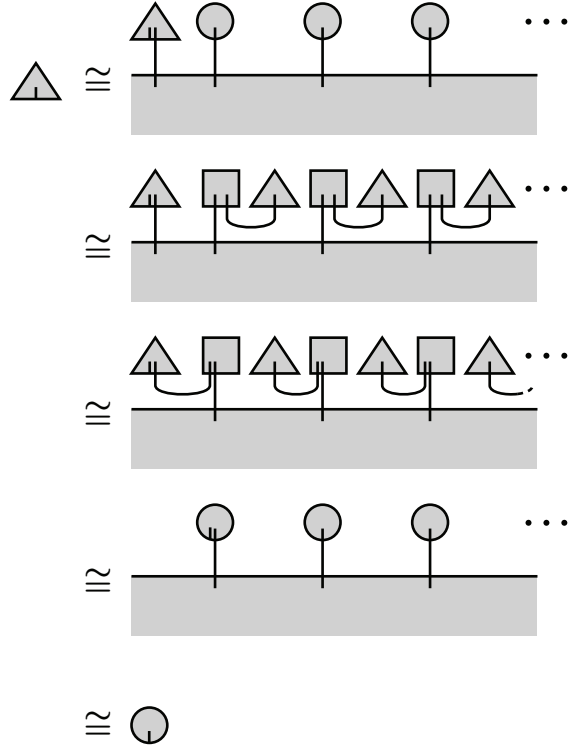}}
    \caption{Mazur's infinite swindle as $1$-handle slides and $0/1$-handle cancellations at infinity.}
\label{fig:hut_pf}
\end{figure}

Finally, we come to the heart of the proof of the hyperplane unknotting theorem.
Mazur's infinite swindle~\cite{M59} is realized as $1$-handle slides and $0/1$-handle cancellations at infinity.
Figure~\ref{fig:hut_pf} proves that $(A,a)$ is \textsc{cat} homeomorphic to $(\R^n,c)$.
In Figure~\ref{fig:hut_pf}, the horizontal region is a copy of $\R^n$.
The first, third, and fifth isomorphisms in Figure~\ref{fig:hut_pf} hold by Theorem~\ref{maincancel}.
The second and fourth isomorphisms hold by Lemma~\ref{ABcancel}.
With $(A,a)\cong (\R^n,c)$, Figure~\ref{fig:hut_cancel} implies that $(B,b)\cong (\R^n,c)$.
By Lemma~\ref{pairs_std}, our proof of the hyperplane unknotting theorem is complete.\\


\end{document}